\documentclass[11pt,leqno]{amsart}

\usepackage[colorlinks=false,hidelinks]{hyperref} 
\usepackage[T1,T5]{fontenc} 
\usepackage[vietnamese,english]{babel}
\usepackage{amssymb}
\usepackage{amsrefs}
\usepackage[shortlabels]{enumitem}
\usepackage{listings}
\usepackage{longtable}

\newtheorem{thm}{Theorem}[section]
\newtheorem{lemma}[thm]{Lemma}
\newtheorem{cor}[thm]{Corollary}
\newtheorem{prop}[thm]{Proposition}
\newtheorem{alg}[thm]{Algorithm}
\newtheorem{conj}[thm]{Conjecture}

\theoremstyle{definition}
\newtheorem{definition}[thm]{Definition}
\newtheorem{example}[thm]{Example}
\newtheorem{prob}[thm]{Problem}

\theoremstyle{remark}
\newtheorem{rmk}[thm]{Remark}
\newtheorem{obs}[thm]{Observation}
\newtheorem*{notation}{Notation}

\numberwithin{equation}{section}

\newcommand{\op}{^{\mathrm{op}}}
\DeclareMathOperator{\Anti}{Anti}
\DeclareMathOperator{\Good}{Good}
\DeclareMathOperator{\id}{id}
\newcommand{\Z}{\mathbb{Z}}
\newcommand{\inv}{^{-1}}
\DeclareMathOperator{\Aut}{Aut}
\DeclareMathOperator{\Inn}{Inn}
\DeclareMathOperator{\Conj}{Conj}
\DeclareMathOperator{\Alex}{GAlex}
\newcommand{\F}{\mathcal{F}}
\newcommand{\G}{\mathcal{G}}
\newcommand{\oo}{\mathcal{O}}
\renewcommand{\phi}{\varphi}
\newcommand{\surj}{\twoheadrightarrow}
\newcommand{\bij}{\xrightarrow{\sim}}
\newcommand{\SL}{\mathrm{SL}}
\newcommand{\SU}{\mathrm{SU}}
\newcommand{\Rc}{\mathsf{Rack}}
\newcommand{\glr}{\mathsf{GLR}}
\newcommand{\LR}{\mathsf{LR}}
\newcommand{\uu}{\mathtt{u}}
\newcommand{\LQ}{\mathsf{LQ}}
\newcommand{\kir}{\mathsf{KIR}}
\newcommand{\LK}{\mathsf{LK}}
\newcommand{\InvRk}{\mathsf{InvRack}}
\newcommand{\SK}{\mathsf{SK}}
\DeclareMathOperator{\Core}{Core}
\DeclareMathOperator{\Dic}{Dic}

\makeatletter
\@namedef{subjclassname@2020}{\textup{2020} Mathematics Subject Classification}
\makeatother

\setlength\oddsidemargin{0in}
\setlength\evensidemargin{0in}
\setlength\textheight{8.5in}
\setlength\textwidth{6.5in}
\setlength\parindent{24pt}

\makeatletter
\newcommand{\addresseshere}{%
	\enddoc@text\let\enddoc@text\relax
}
\makeatother

\hypersetup{
	pdflang={en-US},
	pdftitle={Good involutions of conjugation subquandles},
	pdfauthor={Lực Ta},
	pdfsubject={Mathematics},
	pdfkeywords={Classification, conjugation, core quandle, enumeration, good involution, group, quandle, rack, symmetric quandle}
}

\begin{document}
	
	\title[
    Good involutions of conjugation subquandles
    ]{
    Good involutions of conjugation subquandles
    }
	\author{L\d\uhorn c Ta}	
	
	\address{Department of Mathematics, University of Pittsburgh, Pittsburgh, Pennsylvania 15260}
	\email{ldt37@pitt.edu}
	
	\subjclass[2020]{Primary 20N02; Secondary 20E34, 20E45, 57K12}
	
	\keywords{Classification, conjugation, core quandle, enumeration, good involution, group, quandle, rack, symmetric quandle}
	
	\begin{abstract}
	Posed by Taniguchi, the classification of quandles with good involutions is a difficult question with applications to surface-knot theory. We address this question for subquandles of conjugation quandles, including all core quandles. We also study good involutions of faithful racks. In particular, we obtain sharp bounds on the number of good involutions of racks in these families.

As an application of our results, we implement group-theoretic algorithms that compute all good involutions of conjugation quandles and core quandles; we provide data for those up to order $23$. As another application, we construct infinite families of connected, noninvolutory symmetric quandles.

We also classify symmetric and Legendrian racks, quandles, and kei up to order $8$ using a computer search. Finally, we exhibit an equivalence of categories between racks and Legendrian racks that induces an equivalence between involutory racks, Legendrian kei, and symmetric kei.
	\end{abstract}
	\maketitle
	
\section{Introduction}\label{sec:intro}

Introduced by Kamada \cite{symm-quandles-2} in 2007, \emph{symmetric racks} are nonassociative algebraic structures used to study surface-links \citelist{\cite{symm-quandles-2}\cite{tensor}\cite{yasuda}\cite{dihedral}}, handlebody-links \cite{multiple}, $3$-manifolds \cite{nosaka}, compact surfaces with
boundary in ribbon forms \cite{symm-racks}, and framed virtual knots in thickened surfaces \cite{virtual}. Defined as pairs $(R,\rho)$ where $R=(X,s)$ is a rack and $\rho$ is a function called a \emph{good involution} of $R$, symmetric racks enjoy various (co)homology theories \citelist{\cite{symm-quandles-2}\cite{karmakar}} and connections to group theory \citelist{\cite{lie}\cite{multiple}}. 

Symmetric racks can be traced back to \emph{racks}, which Fenn and Rourke \cite{fenn} introduced in 1992 to construct complete invariants of framed links in connected $3$-manifolds; \emph{quandles}, which Joyce~\cite{joyce} and Matveev~\cite{matveev} independently introduced in 1982 to construct complete invariants of unframed links; and \emph{kei}, which Takasaki~\cite{takasaki} introduced in 1943 to study Riemannian symmetric spaces. See \citelist{\cite{quandlebook}\cite{book}} for references on the theory and \cite{survey} for a survey of the algebraic state of the art.

In this paper, we study the following classification problems posed by Taniguchi \cite{taniguchi}.

\begin{prob}\label{prob1}
    Find necessary and sufficient conditions for good involutions $\rho$ of quandles to exist.
\end{prob}

\begin{prob}\label{prob2}
    Given a quandle $R$, classify all symmetric quandles of the form $(R,\rho)$.
\end{prob}

Although various constructions of symmetric quandles exist \citelist{\cite{galkin-symm}\cite{dihedral}\cite{symm-quandles}}, few complete classification results in the vein of Problems \ref{prob1} and \ref{prob2} exist in the literature. 
In 2010, Kamada and Oshiro \cite{symm-quandles-2} addressed Problem \ref{prob1} for kei and Problem \ref{prob2} for dihedral quandles and trivial quandles. 
In 2023, Taniguchi \cite{taniguchi} solved Problem \ref{prob1} for generalized Alexander quandles and Problem \ref{prob2} for connected generalized Alexander quandles. 

This paper addresses Problems \ref{prob1} and \ref{prob2} for large classes of quandles of broad interest called \emph{conjugation quandles} and \emph{core quandles}. These quandles are constructed using the functors $\Conj$ and $\Core$ from the category of groups to the category of quandles. Joyce \cite{joyce} and Matveev \cite{matveev} independently introduced these quandles to generalize Wirtinger presentations of knot groups and develop complete invariants of knots. 

Conjugation quandles and their subquandles have significant applications to low-dimensional topology \citelist{\cite{eisermann}\cite{fenn}\cite{joyce}}, group theory \citelist{\cite{lattice}\cite{conjugation}\cite{lie}}, and algebraic combinatorics \citelist{\cite{qualgebra}\cite{taGQ}}. In particular, good involutions of conjugation quandles are used to study minimal triple point numbers of nonorientable surface-knots \cite{dihedral}, coloring invariants of handlebody-knots \cite{multiple}, and noncommutative geometry applied to finite groups \cite{lie}. Core quandles \cite{core} and many other classes of quandles (see \cite{embed} for a list) canonically embed into conjugation quandles; our results apply to all such quandles. 

\subsection{Overview of results}
Our main results are the following. Theorem \ref{thm2} states that good involutions $\rho$ of conjugation subquandles $\Conj X$ of conjugation quandles $\Conj G$ factor through the natural projection $\pi:X\surj X/\langle X\rangle$ and a unique function $\zeta^*:X/\langle X\rangle\to Z(\langle X\rangle)$, where $X/\langle X\rangle$ denotes the set of orbits of $X$ under the action of the subgroup $\langle X\rangle\leq G$ by conjugation. Moreover, $\zeta^*$ completely determines $\rho$. Theorem \ref{thm1} gives an analogous statement for core quandles $\Core G$.

\begin{thm}\label{thm2}
    Let $G$ be a group, and let $\Conj X$ be a subquandle of $\Conj G=(G,s)$. 
    Then for all functions $\rho:X\to X$, $\rho$ is a good involution of $\Conj X$ if and only if there exists a function $\zeta:X\to Z(\langle X\rangle)$ such that
    \[
    \rho(x)=(\zeta(x)x)\inv,\qquad\zeta s_x=\zeta=\zeta\rho
    \]
    for all $x\in X$. (See Theorem \ref{thm:conj}; cf.\ Remark \ref{obs:cf} and Corollary \ref{cor:connected}.)
\end{thm}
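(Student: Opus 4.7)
The plan is to prove both directions by extracting $\zeta$ from $\rho$ via the formula $\zeta(x)=(x\rho(x))\inv$, so the key observation to establish is that $x\rho(x)$ lies in $Z(\langle X\rangle)$ whenever $\rho$ is a good involution. For the necessity direction, I would first exploit the third good-involution axiom $s_{\rho(x)}=s_x\inv$: expanding both sides in $\Conj G$ gives $\rho(x)y\rho(x)\inv=x\inv yx$ for every $y\in X$, which rearranges to say that $x\rho(x)$ commutes with every $y\in X$. Since $X$ generates $\langle X\rangle$ and $x\rho(x)\in\langle X\rangle$, this forces $x\rho(x)\in Z(\langle X\rangle)$, so $\zeta(x):=(x\rho(x))\inv$ is a well-defined function $X\to Z(\langle X\rangle)$ with $\rho(x)=(\zeta(x)x)\inv$ by construction.

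Next I would verify the two equivariance relations on $\zeta$. For $\zeta s_x=\zeta$, I would use the axiom $\rho\circ s_x=s_x\circ\rho$ to compute
\[
s_x(y)\rho(s_x(y))=xyx\inv\cdot x\rho(y)x\inv=x\bigl(y\rho(y)\bigr)x\inv,
\]
and then centrality of $y\rho(y)\in Z(\langle X\rangle)$ collapses the conjugation, yielding $\zeta(s_x(y))=\zeta(y)$. For $\zeta\rho=\zeta$, I would plug $y=x$ into the relation $\rho(x)y\rho(x)\inv=x\inv yx$ to learn that $x$ and $\rho(x)$ commute in $G$; combined with $\rho^2=\id$, this gives $\rho(x)\cdot\rho(\rho(x))=\rho(x)x=x\rho(x)$, hence $\zeta(\rho(x))=\zeta(x)$.

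For the sufficiency direction, I would take an arbitrary $\zeta:X\to Z(\langle X\rangle)$ satisfying $\zeta s_x=\zeta=\zeta\rho$ with $\rho(x)=(\zeta(x)x)\inv$, and verify each good-involution axiom by direct manipulation, relying on the fact that every value of $\zeta$ commutes with every element of $\langle X\rangle$. The involutivity $\rho^2=\id$ follows from $\zeta\rho=\zeta$ together with centrality; the relation $\rho\circ s_x=s_x\circ\rho$ reduces via $\zeta s_x=\zeta$ to the identity $(\zeta(y)xyx\inv)\inv=x(\zeta(y)y)\inv x\inv$, which is immediate from centrality of $\zeta(y)$; and the axiom $s_{\rho(x)}=s_x\inv$ reduces to $(\zeta(x)x)\inv y(\zeta(x)x)=x\inv yx$, again immediate.

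I expect no serious obstacle: the content of the theorem is the identification of the correct data $\zeta$ and the recognition that centrality in $\langle X\rangle$ (rather than in $G$) is exactly what the axioms force and require. The step that demands the most care is the first one, showing $x\rho(x)\in Z(\langle X\rangle)$ and confirming that the resulting map $\zeta$ is in fact the unique witness, since the rest of the argument is a bookkeeping exercise in which the central placement of $\zeta(x)$ repeatedly absorbs conjugations. A small additional check worth flagging is that $\rho(x)=(\zeta(x)x)\inv$ genuinely lands in $X$ in the sufficiency direction, but this is automatic from the hypothesis that $\rho$ is given as a function $X\to X$.
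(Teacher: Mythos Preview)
Your proposal is correct and follows essentially the same computations as the paper. The only organizational difference is that the paper routes the argument through an intermediate characterization of good involutions as involutory \emph{antiautomorphisms} satisfying $\rho(x)x\in Z(\langle X\rangle)$ (Lemma~\ref{cor:conjmain}, via the ``two-implies-three'' rule of Proposition~\ref{prop:int}), and then packages the $\zeta s_x=\zeta$ verification as a separate lemma (Lemma~\ref{lem:classftn}); you instead verify each good-involution axiom directly without naming the antiautomorphism condition. The underlying centrality and equivariance calculations are identical in both treatments.
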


\begin{thm}\label{thm1}
    Let $G$ be a group such that the core quandle $\Core G=(G,s)$ is nontrivial, and let $T$ be the subgroup
    \[
    T:=\{z\in Z(G)\mid z^2=1\}
    \]
    of the center $Z(G)$. 
    Then for all functions $\rho:G\to G$, $\rho$ is a good involution of $\Core G$ if and only if there exists a function $\alpha:G\to T$ such that
    \[
    \rho(g)=\alpha(g)g,\qquad \alpha s_g=\alpha=\alpha\rho
    \]
    for all $g\in G$. (See Theorem \ref{thm:core}; cf.\ Remark \ref{rmk:core} and Corollaries \ref{cor:core} and \ref{cor:taka}.)
\end{thm}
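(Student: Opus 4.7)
The strategy is to establish a bijection between good involutions $\rho$ of $\Core G$ and functions $\alpha\colon G\to T$ satisfying $\alpha s_g=\alpha=\alpha\rho$, via the mutually inverse assignments $\rho(g)=\alpha(g)g$ and $\alpha(g):=\rho(g)g\inv$. The ``if'' direction I would handle as a direct verification of the good-involution axioms, exploiting throughout the centrality of $\alpha(g)$ and the identity $\alpha(g)^2=1$: the axiom $s_{\rho(g)}=s_g\inv=s_g$ collapses because $\rho(g)\,h\inv\,\rho(g)=\alpha(g)^2\,g h\inv g=g h\inv g$; the involution property $\rho^2=\id$ follows from $\alpha\rho=\alpha$ combined with $\alpha(g)^2=1$; and the compatibility $\rho\,s_g=s_g\,\rho$ reduces, after sliding $\alpha(h)$ past $g h\inv g$ by centrality, to the equation $\alpha s_g=\alpha$.

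The crux of the converse is to derive the structural statement $\alpha(g)\in T$ from the rack-theoretic axioms alone. The plan is to exploit the axiom $s_{\rho(g)}=s_g\inv$ together with the involutivity of $\Core G$, which yields the identity
\[
\rho(g)\,h\inv\,\rho(g)=g h\inv g\qquad\text{for every }h\in G.
\]
Substituting $\rho(g)=\alpha(g)g$ and cancelling a factor of $g$ on the right gives $\alpha(g)\cdot(g h\inv)\cdot\alpha(g)=g h\inv$; since $g h\inv$ ranges over all of $G$ as $h$ does, this upgrades to $\alpha(g)\,k\,\alpha(g)=k$ for every $k\in G$. Specializing $k=1$ forces $\alpha(g)^2=1$, and the general identity then rearranges to $k\inv\alpha(g)k=\alpha(g)$ for all $k$, so $\alpha(g)\in Z(G)$.

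The two remaining identities fall out by short computations. Expanding $g=\rho^2(g)=\alpha(\rho(g))\,\alpha(g)\,g$ and cancelling $g$ gives $\alpha(\rho(g))=\alpha(g)\inv=\alpha(g)$, which is $\alpha\rho=\alpha$. For $\alpha s_g=\alpha$, the compatibility axiom $\rho(s_g(h))=s_g(\rho(h))$ rewrites as $\alpha(s_g(h))\cdot s_g(h)=\alpha(h)\cdot s_g(h)$ once $\alpha(h)$ is commuted past the core operation using centrality and $\alpha(h)^2=1$, and then cancelling $s_g(h)$ yields the claim.

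I expect the main obstacle to be the extraction of centrality and involutivity of $\alpha(g)$ from the single equation $s_{\rho(g)}=s_g$; everything else is essentially bookkeeping with central elements of order dividing two, which should proceed mechanically once the structural statement $\alpha(g)\in T$ is in hand. The nontriviality hypothesis on $\Core G$ should enter at most in ruling out degenerate cases, since the derivation above already uses that $h$ ranges over all of $G$ rather than any nontriviality of the rack structure per se.
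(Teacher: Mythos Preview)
Your argument is correct and is genuinely different from the paper's. The paper proves Theorem~\ref{thm1} by an indirect route: it uses Bergman's embedding $\phi(g)=(g,g^{-1},1)$ of $\Core G$ into $\Conj K$ with $K=(G\times G)\rtimes\Z/2\Z$, computes the center $Z(\langle\phi(G)\rangle)$ explicitly, and then invokes the conjugation-quandle classification (Theorem~\ref{thm2}) together with an external lemma from \cite{core2} to pin down the codomain as $T$. Your approach bypasses all of this machinery by working directly with the core operation $s_g(h)=gh^{-1}g$: the key extraction step---deducing $\alpha(g)k\alpha(g)=k$ for all $k\in G$ from $s_{\rho(g)}=s_g$, and then reading off both $\alpha(g)^2=1$ and centrality---is elementary and self-contained.

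What each approach buys: the paper's route exhibits Theorem~\ref{thm1} as a structural consequence of Theorem~\ref{thm2}, reinforcing the theme that core quandles are conjugation subquandles in disguise; it also explains why the auxiliary subgroup $A$ (Section~\ref{subsec:A}) appears as an intermediate step. Your route is shorter, requires no external citations, and---as you correctly anticipate---does not actually use the nontriviality hypothesis on $\Core G$. In the paper's argument that hypothesis is genuinely needed, since the computation of $Z(\langle\phi(G)\rangle)$ in Proposition~\ref{lem:core} breaks down when $G$ is an elementary abelian $2$-group (see Lemma~\ref{lem:elem2}).
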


For example, it is well-known that if $\Conj X$ is a subquandle of $\Conj G$ and $X$ is closed under inverses, then the inversion map $x\mapsto x\inv$ is a good involution of $\Conj X$. This good involution corresponds to letting $\zeta:X\to Z(\langle X\rangle)$ be the constant function $x\mapsto 1$.

As applications of Theorem \ref{thm2} and \ref{thm1}, we describe procedures (Algorithms \ref{alg} and \ref{alg2}) for constructing all good involutions of conjugation subquandles and core quandles, and we provide data for groups up to order $23$ in a GitHub repository \cite{code}. This addresses Problem \ref{prob2}.

\subsubsection{Computer search}

\begin{table}
\centering
\caption{Enumeration of isomorphism classes of various types of symmetric racks up to order $8$.}
\label{table:counts}
\begin{tabular}{l|ccccccccc}
Order                        & 0 & 1 & 2 & 3 & 4  & 5  & 6   & 7    & 8     \\ \hline
Symmetric racks           & 1 & 1 & 4 & 9 & 42 & 154 & 1064 & 6678 & 73780\\
Symmetric quandles    & 1 & 1 & 2 & 5 & 13 & 44 & 187 & 937 & 6459\\
Symmetric kei        & 1 & 1 & 2 & 5 & 13 & 42 & 180 & 906 & 6317
\end{tabular}
\end{table}

Through an exhaustive search in the computer algebra system \texttt{GAP} \cite{GAP4}, we tabulated all isomorphism classes of symmetric racks, quandles, and kei up to order $8$. This answers Problem \ref{prob2} for all racks up to order $8$. Our classification used Vojtěchovský and Yang's \cite{library} library of racks in a way similar to the algorithms described in \cite{taGL}*{App.\ A}.  

We provide our code and data in a GitHub repository \cite{code}. Table \ref{table:counts} enumerates our data.

\subsubsection{CNS-quandles}
Connected, noninvolutory symmetric quandles, which we call \emph{CNS-quandles}, are particularly helpful for studying $3$-manifolds \cite{nosaka} and nonorientable surface-knots \cite{dihedral}. However, CNS-quandles are difficult to construct. For example, generalized Alexander quandles have good involutions if and only if they are kei \cite{taniguchi}*{Thm.\ 1.3}, and our computer search showed the following.
\begin{prop}
    Up to isomorphism, the only CNS-quandle of order $8$ or lower is the symmetric Galkin quandle of order $6$ constructed by Clark et al.\ \cite{galkin-symm}*{Prop.\ 5.13}.
\end{prop}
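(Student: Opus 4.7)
The plan is to derive the proposition from the computer enumeration summarized in Table \ref{table:counts}. By definition, a CNS-quandle is a symmetric quandle $(R,\rho)$ whose underlying quandle $R$ is both connected and noninvolutory, so the conclusion reduces to applying two further filters to our complete list of isomorphism classes of symmetric quandles of order at most $8$.

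First I would loop through the \texttt{GAP} data of symmetric quandles tabulated above. For each underlying quandle $R$, connectedness can be tested by computing $\Inn R$ and verifying that it acts transitively on $R$, and non-involutoriness by searching for some $x \in R$ with $s_x^2 \neq \id_R$. Both checks depend only on $R$, not on $\rho$, and are already implemented for the rack library of \cite{library}; applying them to each entry in our classification is routine.

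I expect exactly one isomorphism class $(R,\rho)$ to survive. To identify it as the symmetric Galkin quandle of order $6$ from \cite{galkin-symm}*{Prop.\ 5.13}, I would locate the underlying quandle $R$ in the Vojtěchovský--Yang catalogue to confirm that $R$ is the Galkin quandle in question, then check that the surviving good involution $\rho$ matches the one of Clark et al.\ up to quandle automorphism.

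The main subtlety, rather than a genuine mathematical obstacle, is ensuring that the enumeration is carried out up to \emph{symmetric-quandle} isomorphism: an isomorphism $(R_1,\rho_1)\to(R_2,\rho_2)$ is a quandle isomorphism $f:R_1\to R_2$ satisfying $\rho_2 f = f\rho_1$, which is strictly finer than quandle isomorphism alone. Our algorithms already handle this by quotienting the set of good involutions of each quandle $R$ by the natural action of $\Aut R$, following \cite{taGL}*{App.\ A}, so no further refinement is needed for the two-filter step above to yield the claimed uniqueness.
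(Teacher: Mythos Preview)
Your proposal is correct and matches the paper's approach: the proposition is presented there as a direct consequence of the exhaustive computer enumeration of symmetric quandles up to order $8$, with no separate argument beyond the phrase ``our computer search showed the following.'' Your description of filtering the tabulated data by connectedness and non-involutoriness, and then identifying the sole survivor with the Galkin quandle of Clark et al., is exactly the verification that underlies the paper's claim.
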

Thus, the following problems arise; see \cite{galkin-symm}*{Sec.\ 5}.

\begin{prob}\label{prob:cns}
    Construct infinite families of CNS-quandles.
\end{prob}

\begin{prob}\label{prob:cns2}
    For which integers $k\in\Z^+$ do there exist CNS-quandles of order $k$?
\end{prob}

Carter et al.\ \cite{dihedral}*{Thm.\ 3.1} constructed CNS-quandles of order $2^m m$ for all odd integers $m\geq 1$, while Clark et al.\ \cite{galkin-symm}*{Prop.\ 5.13} constructed CNS-quandles of order $6n$ for all positive integers $n\geq 1$. To the author's knowledge, these remain the only two answers to Problems \ref{prob:cns} and \ref{prob:cns2} in the literature. 
To address this, we introduce a broad family of CNS-quandles in Proposition \ref{prop:cns}. In particular, two special cases of our construction give answers to Problem \ref{prob:cns2}.

\begin{prop}\label{prop:sl}
    For all integers $m\geq 3$ and powers $q$ of odd primes, and for all integers $n\geq 5$ and $k$ such that $3\leq k \leq 2\lfloor n/2\rfloor$, there exist CNS-quandles of orders
    \[
    \frac{(q^m-1)(q^{m-1}-1)}{q-1},\qquad \frac{n!}{k(n-k)!}.
    \]
    (See Examples \ref{ex:sl} and \ref{ex:an} for details.)
\end{prop}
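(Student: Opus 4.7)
The plan is to realize both families as special instances of the construction in Proposition \ref{prop:cns}, taking the ambient group to be $\SL_m(\mathbb F_q)$ in the first case and $S_n$ in the second. In each case the underlying set $X$ is a single conjugacy class of a distinguished element, the quandle structure is inherited from $\Conj G$, and the good involution is the inversion map $\rho(x)=x\inv$; this is a good involution by Theorem \ref{thm2} with $\zeta\equiv 1$, because $X$ will be closed under inverses.

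For Example \ref{ex:sl}, let $X\subset\SL_m(\mathbb F_q)$ be the conjugacy class of nontrivial transvections. Parameterizing transvections by pairs $(u,\phi)\in(\mathbb F_q^m\setminus\{0\})\times((\mathbb F_q^m)^*\setminus\{0\})$ with $\phi(u)=0$, modulo the scaling $(u,\phi)\sim(\lambda u,\lambda\inv\phi)$, yields
\[
|X|=\frac{(q^m-1)(q^{m-1}-1)}{q-1}.
\]
The hypothesis $m\geq 3$ ensures all transvections are $\SL_m(\mathbb F_q)$-conjugate; since they generate $\SL_m(\mathbb F_q)$, $\Conj X$ is connected. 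A transvection $t$ has order $\mathrm{char}(\mathbb F_q)$, which is odd, so $t^2\neq 1$; since $t^2$ is a nontrivial unipotent, it is nonscalar and hence noncentral, so $s_t^2\neq\id$ and $\Conj X$ is not a kei.

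For Example \ref{ex:an}, let $X\subset S_n$ be the conjugacy class of $k$-cycles, so $|X|=\binom{n}{k}(k-1)!=n!/(k(n-k)!)$. The inverse of a $k$-cycle is a $k$-cycle, so $X$ is closed under inversion. For $k\geq 3$ any $k$-cycle $\sigma$ satisfies $\sigma^2\neq 1$, and because $Z(S_n)$ is trivial there is a $k$-cycle $\tau$ with $\sigma^2\tau\sigma^{-2}\neq\tau$, witnessing $s_\sigma^2\neq\id$. The bound $k\leq 2\lfloor n/2\rfloor$ with $n\geq 5$ is calibrated so that Proposition \ref{prop:cns} produces a connected quandle; in particular it excludes $k=n$ with $n$ odd, for which the $S_n$-class of $n$-cycles splits into two $A_n$-conjugacy orbits and the corresponding conjugation subquandle would be disconnected.

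The main technical obstacle is the connectedness argument for the symmetric-group family, since the inner automorphism group of $\Conj X$ is generated by $\langle X\rangle$-conjugation, and for $k$ odd with $n-k\leq 1$ the cycle type $(k,1,\ldots,1)$ has all parts distinct and odd, so the $S_n$-class of $k$-cycles splits in $A_n=\langle X\rangle$. Reconciling this with the full range $3\leq k\leq 2\lfloor n/2\rfloor$ requires a careful appeal to Proposition \ref{prop:cns} and is the heart of Example \ref{ex:an}; the counting identities, the non-kei verification, and the good-involution check via Theorem \ref{thm2} are routine once the right conjugacy class is identified.
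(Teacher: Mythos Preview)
Your approach matches the paper's exactly: both families are obtained by applying Proposition~\ref{prop:cns} to the conjugacy class of transvections in $\SL_m(\mathbb F_q)$ (Example~\ref{ex:sl}) and to the $S_n$-conjugacy class of $k$-cycles (Example~\ref{ex:an}), with the good involution being inversion. Your treatment of the $\SL$ case is correct and coincides with the paper's, except that you compute $|X|$ directly via the $(u,\phi)$ parametrization whereas the paper cites \cite{fulman}.

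For the $S_n$ case you have correctly isolated the delicate point, but your expectation that Example~\ref{ex:an} contains ``a careful appeal to Proposition~\ref{prop:cns}'' resolving the splitting issue is misplaced: the paper simply asserts that for $3\leq k\leq 2\lfloor n/2\rfloor$ the class of $k$-cycles satisfies one of the two bulleted conditions and applies Proposition~\ref{prop:cns} without further comment. In fact the assertion fails when $n$ is even and $k=n-1$: then $k$ is odd, $\langle X\rangle=A_n$, and the cycle type $(k,1)$ has distinct odd parts, so the $S_n$-class of $k$-cycles splits in $A_n$ and $\Conj X$ is not connected. For instance, the $5$-cycles in $S_6$ form a class of size $144$, which is indeed absent from the paper's own Table~\ref{table:sn} for $n=6$. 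The bound $k\leq 2\lfloor n/2\rfloor$ excludes $k=n$ for $n$ odd but not $k=n-1$ for $n$ even. So the obstacle you flagged in your final paragraph is real and is not dealt with in the paper; there is no additional argument for you to reproduce.
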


\begin{rmk}
    If $q=3=k$, then for all integers $m\geq 3$ and $n\geq 5$ such that $n\not\equiv 0,1,2$ mod $9$, the CNS-quandles in Proposition \ref{prop:sl} do not share orders with the CNS-quandles constructed by Clark et al. Most of them also do not share orders with the CNS-quandles constructed by Carter et al.
\end{rmk}

\subsubsection{Connections with Legendrian racks}

Motivated by an observation of Karmakar et al.\ \cite{karmakarGL}*{Rem.\ 3.8} and the classification problems posed in \citelist{\cite{ceniceros}\cite{karmakarGL}\cite{taGL}}, we draw connections between symmetric racks and \emph{Legendrian racks}, which are algebraic structures used to distinguish Legendrian knots in contact three-space.

Generalizing prior work of Kulkarni and Prathamesh \cite{original} in 2017, Ceniceros et al.\ \cite{ceniceros} introduced Legendrian racks in 2021. In 2023, Karmakar et al.\ \cite{karmakarGL} and Kimura \cite{bi} independently introduced generalizations of Legendrian racks called \emph{GL-racks}. In 2025, the author \cite{taGL} studied algebraic aspects of GL-racks and classified them up to order $8$, and Cheng and He \cite{he} showed that GL-racks distinguish Legendrian knots up to the absolute values of their classical invariants.

In this paper, we show that the classification problems for symmetric kei, Legendrian kei, and involutory racks are equivalent. We also obtain similar results for other types of GL-racks.

\begin{thm}\label{thm:legendrian}
    The categories of symmetric kei, Legendrian kei, and involutory racks are equivalent. In fact, good involutions and Legendrian structures coincide for kei. (See Theorem \ref{thm:gl}.)
\end{thm}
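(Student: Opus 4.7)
The plan is to reduce the theorem to an axiom comparison and then transfer the general equivalence $\Rc\simeq\LR$ from Theorem \ref{thm:gl} to the involutory setting. The central observation is that on a kei, the defining axioms of a good involution and those of a Legendrian structure literally coincide, after which the three-way equivalence follows quickly.

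First I would unpack the axioms. A good involution of a rack $(X,s)$ is a map $\rho\colon X\to X$ with $\rho^2=\id_X$, with $\rho s_x=s_x\rho$ for all $x$, and with $s_{\rho(x)}=s_x\inv$. A Legendrian structure on an object of $\LK$ is an involutive bijection $u\colon X\to X$ (the involutivity being part of the ``kei'' condition on $u$) satisfying $us_x=s_xu$ and $s_{u(x)}=s_x$. On a kei the identity $s_x\inv=s_x$ forces the good-involution axiom $s_{\rho(x)}=s_x\inv$ to coincide with the Legendrian axiom $s_{u(x)}=s_x$, while the remaining axioms match verbatim. Thus a function on a kei is a good involution precisely when it is a Legendrian structure, which is the ``in fact'' clause. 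The isomorphism $\SK\cong\LK$ follows at once: the objects of the two categories coincide, and a morphism in either category is a rack homomorphism $f$ compatible with the extra structure ($f\rho=\rho'f$, equivalently $fu=u'f$).

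To obtain the equivalence with $\InvRk$, I would invoke Theorem \ref{thm:gl}, which provides quasi-inverse functors $F\colon\Rc\to\LR$ and $G\colon\LR\to\Rc$ realizing an equivalence $\Rc\simeq\LR$. I would verify that $F$ restricts to a functor $\InvRk\to\LK$ and that $G$ restricts to a functor $\LK\to\InvRk$ by checking that each preserves the defining axiom $s_x^2=\id$. The natural isomorphisms $FG\cong\id_{\LR}$ and $GF\cong\id_{\Rc}$ then restrict automatically, giving $\InvRk\simeq\LK$. Composing with the isomorphism $\LK\cong\SK$ from the previous paragraph completes the three-way equivalence.

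The main obstacle is Theorem \ref{thm:gl} itself: the quasi-inverse $G$ must absorb the extra datum $u$ back into the rack operation via a suitable twist, and one must show that the resulting unit and counit are natural isomorphisms. Once this general theorem is in hand, verifying that its functors preserve the involutory axiom $s_x^2=\id$ and descend to the relevant subcategories will be a short check.
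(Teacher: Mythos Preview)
Your proposal is correct and follows essentially the same route as the paper: the identification $\SK=\LK$ via the axiom comparison you describe is exactly Proposition~\ref{prop:equal} (the paper phrases it through Proposition~\ref{cor:inv-cent}, but this is the same computation), and your plan to restrict the quasi-inverse functors of the equivalence $\Rc\simeq\LR$ to obtain $\InvRk\simeq\LK$ is precisely how the paper deduces Proposition~\ref{prop4} from Proposition~\ref{prop1}. The explicit twist the paper uses is $(X,s,\uu)\mapsto(X,\uu^3 s)$ with inverse $(X,s)\mapsto(X,\theta_R^{-3}s,\theta_R)$, and checking that these send $\LK$ to $\InvRk$ and back is indeed the short verification you anticipate.
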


In particular, the bottom row of Table \ref{table:counts} enumerates isomorphism classes of Legendrian kei and those of involutory racks; cf.\ Table \ref{table:gl} in Section \ref{sec:leg}.

\subsection{Structure of the paper}
In Section \ref{sec:prelims}, we provide definitions and examples of racks, quandles, and kei. In Section \ref{sec:prelims2}, we do the same for rack antiautomorphisms and good involutions.

In Section \ref{sec:general}, we describe good involutions in terms of rack (anti)automorphisms. In particular, we deduce that only self-dual racks have good involutions. This addresses Problem \ref{prob1}.

In Section \ref{sec:special}, we discuss Problems \ref{prob1} and \ref{prob2} for faithful racks. A class of examples shows that infinitely many quandles have involutory antiautomorphisms but no good involutions.

In Section \ref{sec:conj}, we prove Theorem \ref{thm2} and discuss some special cases along the way. This addresses Problems \ref{prob1} and \ref{prob2} for subquandles of conjugation quandles.

In Section \ref{sec:core}, we prove Theorem \ref{thm1} by reducing to a special case of Theorem \ref{thm2}. This addresses Problems \ref{prob1} and \ref{prob2} for core quandles.

In Section \ref{sec:algos}, we describe procedures for computing all good involutions of conjugation subquandles and core quandles. We provide data for groups up to order $23$ \cite{code}. This addresses Problem \ref{prob2}.

In Section \ref{sec:construct}, we compute all good involutions of several classes of connected quandles. Along the way, we obtain Proposition \ref{prop:sl}. This addresses Problems \ref{prob:cns} and \ref{prob:cns2}. 

In Section \ref{sec:leg}, we construct an equivalence of categories between racks and Legendrian racks. We discuss various restrictions of this equivalence; in particular, we obtain Theorem \ref{thm:legendrian}.

In Section \ref{sec:open}, we pose several questions for further research.

\begin{notation}
    We use the following notation throughout this paper. Denote the composition of functions $\phi:X\to Y$ and $\psi:Y\to Z$ by $\psi\phi$. Given a set $X$, let $\id_X$ denote the identity map of $X$, and let $S_X$ denote the symmetric group of $X$. 
    Given an integer $n\in\Z^+$, let $S_n$ and $A_n$ be the symmetric and alternating groups on $n$ letters, respectively.
\end{notation}

\subsection*{Acknowledgments}
I thank Samantha Pezzimenti, Jose Ceniceros, and Peyton Wood for respectively introducing me to knot theory, quandles, and Legendrian racks. I also thank Sam Raskin for advising me during the writing of \cite{taGL}, the results of which helped inspire Theorem \ref{thm:legendrian}.

\section{Preliminaries: Racks}\label{sec:prelims}

\subsection{Racks and quandles}

\begin{definition}
    Let $X$ be a set, let $s:X\to S_X$ be a map, and write $s_x:=s(x)$ for all elements $x\in X$. We call the pair $(X,s)$ a \emph{rack} if \begin{equation*}
		s_xs_y=s_{s_x(y)}s_x
	\end{equation*}
    for all $x,y\in X$. We call $s$ the \emph{rack structure} of $(X,s)$, and we say that $|X|$ is the \emph{order} of $(X,s)$.
\end{definition}

\begin{rmk}
    Some authors define racks as sets $X$ with equipped with a nonassociative binary operation $\triangleright:X\times X\to X$ satisfying certain axioms, including right-distributivity. These two definitions are equivalent \citelist{\cite{survey} \cite{tetrahedral}\cite{joyce}} via the formula
    \[
    s_y(x)=x\triangleright y.
    \]
\end{rmk}

\begin{rmk}
    Although the empty set can be viewed as a rack, we will only discuss nonempty racks in this paper.
\end{rmk}

\begin{definition}
    Let $R:=(X,s)$ be a rack.
    \begin{itemize}
        \item We say that $R$ is a \emph{quandle} if $s_x(x)=x$ for all $x\in X$.
        \item If $R$ is a quandle and $Y\subseteq X$ is a subset such that $s_y(Y)= Y$ for all $y\in Y$, then we say that the quandle $(Y,s|_Y)$ is a \emph{subquandle} of $R$. 
        \item We say that $R$ is \emph{involutory} if $s_x^2=\id_X$ for all $x\in X$. 
        \item A \emph{kei} is an involutory quandle.
        \item We say that $R$ is \emph{faithful} if $s$ is injective.
    \end{itemize}
\end{definition}

\begin{example}[\cite{fenn}]
    Let $X$ be a set, and fix $\sigma\in S_X$. Define $s:X\rightarrow S_X$ by $x\mapsto \sigma$, so that $s_x=\sigma$ for all $x\in X$. Then $R:=(X,s)$ is a rack called a \emph{permutation rack}, \emph{cyclic rack}, or \emph{constant action rack}.
        
    Note that $R$ is a quandle if and only if $\sigma=\id_X$, in which case we call $R$ a \emph{trivial quandle}. Moreover, $R$ is involutory if and only if $\sigma$ is an involution, and $R$ is faithful if and only if $|X|\leq 1$.
\end{example}

    \begin{example}[\citelist{\cite{joyce}\cite{fenn}\cite{conjugation}}]
    Let $G$ be a group, and define $s:G\rightarrow S_G$ by sending each element $g\in G$ to the conjugation map defined by \[s_g(h):= ghg\inv\]
    for all $h\in G$. 
	Then $\Conj G:=(G,s)$ is a quandle called a \emph{conjugation quandle} or \emph{conjugacy quandle}. Note that $s_g\inv=s_{g\inv}$ for all $g\in G$.
\end{example}

\begin{example}[\citelist{\cite{joyce}\cite{core}\cite{core2}}]\label{ex:core}
    Let $G$ be a group, and define $s:G\to S_G$ by
    \[
    s_g(h):=gh\inv g
    \]
    for all $g,h\in G$. Then $\Core G:=(G,s)$ is a kei called the \emph{core quandle} of $G$. 
    
    Note that $\Core G$ is faithful if and only if the center $Z(G)$ is $2$-torsionless \cite{core2}*{Lem.\ 4.1}. Moreover, $\Core G$ is a permutation rack if and only if $G$ is an elementary abelian $2$-group, in which case $\Core G$ is a trivial quandle.

    Furthermore, if $G$ is an additive abelian group, then the core quandle $T(G):=\Core(G)$ is called a \emph{Takasaki kei} \cite{quandlebook}*{Ex.\ 54}. Note that $T(G)$ is faithful if and only if $G$ is $2$-torsionless. 
    
    In particular, if $G$ is cyclic of order $n$, then we call $R_n:=T(\Z/n\Z)$ a \emph{dihedral quandle}. By the above discussion, $R_n$ is faithful if and only if $n$ is odd.
\end{example}

\begin{example}[\cite{taniguchi}, cf.\ \cite{alex}]\label{ex:alex}
    Let $G$ be a group, and let $\phi\in\Aut G$ be an automorphism of $G$. The \emph{generalized Alexander quandle} $\Alex(G,\phi)$ is defined to be the quandle $(G,s)$, where
    \[
    s_g(h):=\phi(hg\inv)g
    \]
    for all $g,h\in G$. A routine check shows that $\Alex(G,\phi)$ is faithful if and only if the identity element of $G$ is the only fixed point of $\phi$.

    In particular, let $G$ be an additive abelian group, and let $\phi$ be the inversion map $g\mapsto -g$. Then $\Alex(G,\phi)$ equals the Takasaki kei $T(G)$. 
\end{example}

\begin{example}[\cite{tetrahedral}*{Ex.\ 2.8}]\label{ex:tetra}
    Define the set $X:=\{1,2,3,4\}$, and define $s:X\to S_4$ by
    \[
    s_1:=(234),\quad s_2:=(143),\quad s_3:=(124),\quad s_4:=(132)
    \]
    in cycle notation. Then $(X,s)$ is a faithful, noninvolutory quandle called the \emph{tetrahedral quandle} or \emph{regular tetrahedron quandle}.
\end{example}

\subsubsection{Conjugation subquandles}

We discuss subquandles of conjugation quandles. In the following, let $G$ be a group, and let $s:G\to S_G$ denote the rack structure of $\Conj G$.

\begin{lemma}[\cite{conjugation}*{Thm.\ 3.1}]\label{lem:conj}
    For all subsets $X\subseteq G$, the pair $\Conj X:=(X,s|_X)$ is a subquandle of $\Conj G$ if and only if $X$ is closed under conjugation by elements of the subgroup $\langle X\rangle$ of $G$ generated by $X$. 
\end{lemma}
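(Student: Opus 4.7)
The plan is to unpack the subquandle definition and show that the apparent asymmetry between the two conditions (closure under conjugation by $X$ versus by the full subgroup $\langle X\rangle$) disappears once one uses the fact that subquandle closure is an equality, not just an inclusion.

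First I would recall that $(X,s|_X)$ being a subquandle of $\Conj G$ means precisely that $s_x(X)=xXx\inv=X$ for every $x\in X$, and note that the inherited operation then automatically satisfies the quandle axioms because $\Conj G$ does. So the lemma reduces to the group-theoretic equivalence
\[
xXx\inv = X \text{ for all } x\in X \iff gXg\inv \subseteq X \text{ for all } g\in\langle X\rangle.
\]

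For the reverse implication, I would observe that if $gXg\inv\subseteq X$ for every $g\in\langle X\rangle$, then both $xXx\inv\subseteq X$ and $x\inv X x\subseteq X$ hold for each $x\in X$ (since $x\inv\in\langle X\rangle$), and the second inclusion rearranges to $X\subseteq xXx\inv$, giving the required equality.

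For the forward implication, which is the slightly more substantive direction, I would argue by induction on word length. From $xXx\inv=X$ we immediately get $x\inv X x=X$ by multiplying on the left by $x\inv$ and on the right by $x$, so $X$ is closed under conjugation by every generator of $\langle X\rangle$ and every inverse of a generator. An induction on the length of a word $g=y_1y_2\cdots y_n$ with each $y_i\in X\cup X\inv$ then shows $gXg\inv = y_1\cdots y_n X y_n\inv\cdots y_1\inv = X$, completing the equivalence.

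I do not anticipate any genuine obstacle: the only mild subtlety is remembering that the subquandle axiom demands $s_x(X)=X$ rather than just $s_x(X)\subseteq X$, since this is exactly what lets us promote closure under conjugation by $X$ to closure under conjugation by $X\inv$, and hence by all of $\langle X\rangle$.
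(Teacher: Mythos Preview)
Your argument is correct. The paper does not supply its own proof of this lemma; it simply cites it as \cite{conjugation}*{Thm.\ 3.1}, so there is nothing to compare against beyond noting that your unpacking of the subquandle condition and the word-length induction is the standard elementary route.
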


\begin{obs}\label{obs:conj2}
    Let $\Conj X$ be a subquandle of $\Conj G$. Then $\Conj X$ is a kei if and only if $x^2\in Z(\langle X\rangle)$ for all $x\in X$, and $\Conj X$ is faithful if and only if the action of $\langle X\rangle$ on $X$ by conjugation is faithful. In particular, $\Conj G$ is faithful if and only if $G$ is centerless.
\end{obs}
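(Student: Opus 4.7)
The plan is to unpack each of the three equivalences directly from the conjugation formula $s_x(y) = xyx\inv$ of Lemma \ref{lem:conj}, together with the fact that $X$ generates $\langle X\rangle$ as a group. For the kei criterion, I would compute $s_x^2(y) = x^2yx^{-2}$ and observe that $s_x^2 = \id_X$ iff $x^2$ commutes with every element of $X$; since $X$ generates $\langle X\rangle$, this is equivalent to $x^2 \in Z(\langle X\rangle)$, and quantifying over $x \in X$ gives the stated characterization.

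For the faithfulness criterion, the key observation is that the conjugation action of $\langle X\rangle$ on $X$ assembles into a group homomorphism $\pi: \langle X\rangle \to S_X$ whose restriction to $X$ is precisely $s|_X$. Since $X$ generates $\langle X\rangle$, the kernel of $\pi$ equals the centralizer of $X$ in $\langle X\rangle$, which is $Z(\langle X\rangle)$. Hence the action is faithful iff $Z(\langle X\rangle) = \{1\}$, and in that case $s|_X = \pi|_X$ is automatically injective, so $\Conj X$ is faithful. Conversely, $s_x = s_y$ for $x,y \in X$ is equivalent to $xy\inv \in \ker\pi = Z(\langle X\rangle)$, so a trivial kernel forces $x = y$. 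The ``in particular'' clause is the special case $X = G$: then $\langle X\rangle = G$, $1 \in G$, and $\pi$ is conjugation of $G$ on itself with kernel $Z(G)$, so $\Conj G$ is faithful iff $Z(G) = \{1\}$.

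I expect the argument to be routine, with the one mildly delicate point being that for small $X$ (for instance any singleton) the rack $\Conj X$ can be faithful regardless of $Z(\langle X\rangle)$; the converse half of the general faithfulness equivalence should therefore be read in its natural form, ``$s_x = s_y$ forces $xy\inv \in Z(\langle X\rangle)$,'' which is exactly the content invoked in the ``in particular'' corollary $X = G$, where taking $y = 1$ closes the argument immediately.
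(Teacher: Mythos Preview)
Your argument for the kei criterion is correct and is exactly the routine computation the paper has in mind (the observation is stated without proof). For the faithfulness criterion, your identification $\ker\pi = C_{\langle X\rangle}(X) = Z(\langle X\rangle)$ is right, and this cleanly gives the implication ``action faithful $\implies$ $\Conj X$ faithful'' as well as the special case $X=G$.

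You are also right to flag the converse: as literally stated, ``$\Conj X$ faithful $\implies$ action of $\langle X\rangle$ on $X$ faithful'' fails for $X=\{x\}$ with $x\neq 1$, since then $s|_X$ is vacuously injective while $Z(\langle x\rangle)=\langle x\rangle\neq 1$. The paper only ever invokes the true direction (in the proof of Corollary~\ref{cor:centerless}) and the case $X=G$ (where $1\in X$ rescues the converse via your $y=1$ trick), so the imprecision is harmless for its purposes; your caveat is a genuine sharpening. One small wording issue: your ``Conversely'' sentence in the second paragraph actually re-derives the forward implication rather than the converse. It would read more cleanly as the single observation that $s_x=s_y \iff xy^{-1}\in Z(\langle X\rangle)$, from which both the forward implication and the $X=G$ corollary follow at once, leaving the general converse to the honest disclaimer in your final paragraph.
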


\begin{example}
    The conjugation quandle of the quaternion group $Q_8$ is a nonfaithful kei.
\end{example}

Of particular interest in this paper are subquandles $\Conj X$ of conjugation quandles $\Conj G$ whose underlying sets $X$ are symmetric subsets of $G$.

\begin{definition}
    Let $\Conj X$ be a subquandle of $\Conj G$. We say that $\Conj X$ is \emph{inversion-closed}\footnote{We chose this name over the name ``symmetric subquandle'' to avoid confusion with Definition \ref{def:symmetric}.} if $x\inv\in X$ for all $x\in X$.
\end{definition}

\begin{rmk}
    By Lemma \ref{lem:conj}, a subquandle $\Conj X$ of $\Conj G$ is inversion-closed if and only if every element $x\in X$ is conjugate to its inverse $x\inv$ in the subgroup $\langle X\rangle\leq G$.
\end{rmk}

\begin{example}
    Let $n\in\Z^+$ be a positive integer, and let $X$ be any conjugacy class in the symmetric group $S_n$. Then $\Conj X$ is an inversion-closed subquandle of $\Conj S_n$.
\end{example}

\begin{example}\label{ex:a4}
    Let $X$ be the split conjugacy class
    \[
    X:=\{(234),(143),(124),(132)\}
    \]
    in the alternating group $A_4$. Then $\langle X\rangle=A_4$ is centerless, so $\Conj X$ is a faithful, noninvolutory subquandle of $\Conj A_4$. Moreover, $\Conj X$ is not inversion-closed. 
\end{example}

\subsubsection{Rack homomorphisms}
Racks form a category $\Rc$ with the following morphisms; quandles form a full subcategory of this category. See \citelist{\cite{rack-roll}\cite{center}\cite{joyce}} for further discussion of racks and quandles from a categorical perspective.

\begin{definition}
    Given two racks $(X,s)$ and $(Y,t)$, we say that a map $\varphi:X\to Y$ is a \emph{rack homomorphism} if \[\varphi s_x=t_{\varphi(x)}\varphi\] for all $x\in X$. A \emph{rack isomorphism} is a bijective rack homomorphism, and a \emph{rack automorphism} of is an isomorphism from a rack to itself.
\end{definition}

\begin{example}[\citelist{\cite{joyce}\cite{matveev}}]
    It is straightforward to check that the assignments $G\mapsto \Conj G$ and $G\mapsto \Core G$ define faithful functors from the category of groups to the category of quandles.
\end{example}

\begin{example}
    The tetrahedral quandle from Example \ref{ex:tetra} is isomorphic to the quandle in Example~\ref{ex:a4}.
\end{example}

\begin{definition}
    We denote the \emph{automorphism group} of a rack $R=(X,s)$ by $\Aut R$. The \emph{inner automorphism group}\footnote{Some authors call $\Inn R$ the \emph{right multiplication group} $\operatorname{RMlt}R$ or \emph{operator group} $\operatorname{Op}R$ of $R$.} $\Inn R$ of $R$ is the subgroup of $S_X$ generated by $S(X)$: \[\Inn R:=\langle s_x\mid x\in X\rangle.\]
\end{definition}

\begin{obs}
    For all racks $R$, the group $\Inn R$ is a normal subgroup of $\Aut R$.
\end{obs}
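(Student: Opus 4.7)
The plan is to verify the observation in two short steps: first, show that $\Inn R\leq \Aut R$ (so that it makes sense to ask whether $\Inn R\trianglelefteq \Aut R$), and second, show that conjugation by any $\varphi\in\Aut R$ permutes the generating set $\{s_x\mid x\in X\}$ of $\Inn R$.

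For the first step, I would note that each map $s_x$ is already a rack automorphism: by definition $s_x\in S_X$, and the rack axiom $s_xs_y=s_{s_x(y)}s_x$ rearranges to
\[
s_x s_y s_x\inv = s_{s_x(y)},
\]
which is exactly the rack homomorphism identity applied to $\varphi=s_x$. Hence every generator of $\Inn R$ lies in $\Aut R$, so $\Inn R\leq \Aut R$.

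For the second step, let $\varphi\in\Aut R$ and $x\in X$. By the definition of a rack homomorphism, $\varphi s_x=s_{\varphi(x)}\varphi$, so
\[
\varphi s_x\varphi\inv = s_{\varphi(x)}\in \Inn R.
\]
Thus conjugation by $\varphi$ stabilizes the generating set $\{s_x\mid x\in X\}$ of $\Inn R$, and therefore stabilizes $\Inn R$ itself. Since $\varphi$ was arbitrary, $\Inn R$ is normal in $\Aut R$.

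There is no real obstacle here; the entire content of the observation is the rearrangement of the rack homomorphism identity. The only mild point to flag is that one must first know that each $s_x$ actually belongs to $\Aut R$ (not merely $S_X$), which is again immediate from the rack axiom.
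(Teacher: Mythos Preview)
Your proof is correct and is the standard argument. The paper itself states this as an Observation without proof, so there is nothing to compare against; your two-step verification (that each $s_x\in\Aut R$ via the rack axiom, and that conjugation by $\varphi\in\Aut R$ sends $s_x$ to $s_{\varphi(x)}$) is exactly what one would write if a proof were required.
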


\begin{example}[\cite{fenn}*{Sec.\ 1.3}]\label{ex:innconj}
    Let $G$ be a group, let $\Conj X$ be a subquandle of $\Conj G$, and let $H:=\langle X\rangle$ be the subgroup of $G$ generated by $X$. Then $\Inn(\Conj X)$ equals the inner automorphism group $\Inn H$ of the group $H$. In particular, the assignment $s_x\mapsto xZ(H)$ is a group isomorphism from $\Inn(\Conj X)$ to $H/Z(H)$.
\end{example}

\begin{example}
    The inner automorphism group of the tetrahedral quandle from Example \ref{ex:tetra} is the alternating group $A_4$.
\end{example}

\begin{definition}
    We say that a rack $R$ is \emph{connected} if the action of $\Inn R$ on $R$ is transitive.
\end{definition}

\begin{lemma}[\cite{conjugation}*{Thm.\ 3.2}]\label{lem:conn}
    If $\Conj X$ is a subquandle of $\Conj G$, then $\Conj X$ is connected if and only if $X$ is a conjugacy class in the subgroup $\langle X\rangle\leq G$.
\end{lemma}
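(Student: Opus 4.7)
The plan is to reduce the statement directly to Example \ref{ex:innconj}, which identifies $\Inn(\Conj X)$ with $\Inn H$ for $H:=\langle X\rangle$, together with the closure property of Lemma \ref{lem:conj}. The key observation is that under this identification, the action of a generator $s_x\in\Inn(\Conj X)$ on $X$ is precisely conjugation by $x\in H$, and more generally, since $\Inn(\Conj X)$ is generated by the $s_x$ and their inverses $s_x\inv = s_{x\inv}$ (both of which correspond to conjugation by elements of $H$), every orbit of $\Inn(\Conj X)$ on $X$ is the intersection $C\cap X$ for some $H$-conjugacy class $C\subseteq H$. But Lemma \ref{lem:conj} tells us $X$ is closed under conjugation by $H$, so whenever $C$ meets $X$ we in fact have $C\subseteq X$, i.e.\ the orbits of $\Inn(\Conj X)$ on $X$ are exactly the $H$-conjugacy classes that lie in $X$.

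From this, both directions are immediate. For the forward direction, if $\Conj X$ is connected, then $X$ is a single $\Inn(\Conj X)$-orbit, hence by the above a single $H$-conjugacy class. For the backward direction, if $X$ is an $H$-conjugacy class, then it is a single orbit for the conjugation action of $H$, hence a single orbit for $\Inn H\cong\Inn(\Conj X)$, so $\Conj X$ is connected.

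The main technical point, and really the only thing to verify carefully, is the identification of $\Inn(\Conj X)$-orbits with $H$-conjugacy classes intersected with $X$. One direction of this identification is trivial from the formula $s_x(y)=xyx\inv$; the other uses that $H=\langle X\rangle$ is generated \emph{as a group} by $X$, so every conjugation by an element of $H$ can be written as a finite composition of $s_{x_i}^{\pm 1}$'s. I do not anticipate any real obstacle beyond cleanly invoking Example \ref{ex:innconj} and Lemma \ref{lem:conj}; this lemma is essentially a translation of the group-theoretic notion of conjugacy class into the rack-theoretic notion of orbit under the inner automorphism group.
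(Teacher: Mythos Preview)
Your argument is correct. Note, however, that the paper does not supply its own proof of this lemma: it is cited as \cite{conjugation}*{Thm.\ 3.2} and stated without proof. There is therefore no ``paper's proof'' to compare against; your write-up simply fills in what the paper takes as known.

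One minor notational quibble: when you write ``their inverses $s_x\inv = s_{x\inv}$,'' this equality as stated only makes sense inside $\Conj G$, since $x\inv$ need not lie in $X$ and hence $s_{x\inv}$ need not be a generator of $\Inn(\Conj X)$. What you actually use (and what is true) is that the permutation $s_x\inv\in\Inn(\Conj X)$ acts on $X$ as conjugation by $x\inv\in H$. Since $H=\langle X\rangle$ is generated as a group by $X$, every element of $\Inn(\Conj X)$ acts as conjugation by some element of $H$, and conversely every $H$-conjugation restricts to an element of $\Inn(\Conj X)$; this is exactly the content of Example \ref{ex:innconj}. With that adjustment the argument goes through cleanly.
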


\begin{lemma}[\cite{core2}*{Cor.\ 3.7}]\label{lem:core-conn}
    Let $G$ be a group. Then the core quandle $\Core G$ is connected if and only if $G$ is generated by the squares of its elements.
\end{lemma}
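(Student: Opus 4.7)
The plan is to analyze the orbit $\oo_e$ of the identity $e\in G$ under the action of $\Inn(\Core G)$ on $G$, and to show directly that $\oo_e$ equals the subgroup $H:=\langle g^2:g\in G\rangle$ generated by all squares. Since $\Core G$ is connected iff $\Inn(\Core G)$ acts transitively iff $\oo_e=G$, this will give the equivalence $\Core G$ connected $\iff H=G$. As preparation, I would observe that $H$ is normal in $G$ (conjugation carries squares to squares), and that $G/H$ is the largest quotient in which every element squares to $1$; such a group is automatically abelian (since $xy=(xy)\inv=yx$), so $G/H$ is an elementary abelian $2$-group.

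For the inclusion $\oo_e\subseteq H$, I would reduce mod $H$. Each generator $s_g$ of $\Inn(\Core G)$ acts by $s_g(x)=gx\inv g$, which modulo $H$ equals $g^2 x \equiv x$, using that $x\inv\equiv x$ and $g^2\in H$. Hence every generator of $\Inn(\Core G)$ fixes each coset of $H$, so the orbit of $e$ stays inside the coset $eH=H$.

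For the reverse inclusion $\oo_e\supseteq H$, I would first note that $T_g:=s_g s_e\in\Inn(\Core G)$ sends $x$ to $gxg$ (since $s_e(x)=x\inv$), so iteratively applying such $T_{g_i}$ to $e$ shows $\oo_e$ contains every palindromic product of the form $g_1g_2\cdots g_ng_n\cdots g_2g_1$ with $n\geq 0$ and $g_i\in G$. It then remains to realize every element of $H$---that is, every product of squares $g_1^2g_2^2\cdots g_k^2$ (with $g\inv{}^2=(g\inv)^2$ absorbing signs)---as such a palindromic product. I would argue by induction on $k$: given a palindromic expression $X=a_1\cdots a_ka_k\cdots a_1$ and any $g\in G$, set $b_1:=g$ and $b_j:=g\inv a_{j-1}g$ for $2\leq j\leq k+1$. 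Telescoping gives $b_1b_2\cdots b_{k+1}=a_1\cdots a_kg$ and $b_{k+1}\cdots b_1=g\inv a_k\cdots a_1g^2$, so their product is the palindromic word
\[
b_1\cdots b_{k+1}b_{k+1}\cdots b_1 = X\cdot g^2,
\]
advancing the induction.

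The main obstacle is this inductive construction rewriting an arbitrary product of squares as a palindromic product; the other pieces (normality of $H$, the mod-$H$ computation, and the identification of $T_g$) are short.
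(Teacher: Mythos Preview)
Your argument is correct. The paper does not actually prove this lemma; it merely cites it from \cite{core2}*{Cor.\ 3.7} (Spaggiari--Bonatto). So there is no in-paper proof to compare against, and you have supplied a self-contained one.

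Your approach---identifying the $\Inn(\Core G)$-orbit $\oo_e$ of the identity with the subgroup $H=\langle g^2:g\in G\rangle$---is the natural one. The inclusion $\oo_e\subseteq H$ via the observation that each $s_g$ acts trivially modulo $H$ (since $G/H$ is an elementary abelian $2$-group) is clean. The reverse inclusion is where the work lies, and your inductive rewriting of an arbitrary product of squares as a palindromic word $a_1\cdots a_k a_k\cdots a_1$ is correct: the telescoping check
\[
b_1\cdots b_{k+1}=a_1\cdots a_k g,\qquad b_{k+1}\cdots b_1=g^{-1}a_k\cdots a_1 g^2
\]
with $b_1=g$ and $b_j=g^{-1}a_{j-1}g$ does give $b_1\cdots b_{k+1}b_{k+1}\cdots b_1=Xg^2$, and the base case $k=0$ yields $g^2=g\cdot g$. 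One minor point: you might make the base case and the direction of the induction (on the number of square factors, not on the palindrome length) explicit, but the content is all there.
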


\section{Preliminaries: Good involutions}\label{sec:prelims2}

\subsection{Rack antiautomorphisms} A key ingredient in our proofs is \emph{rack antiautomorphisms}, which Horvat \cite{antiaut} introduced in 2017 to study homeomorphisms of pairs $(S^3,K)$ for all knots $K\subset S^3$.

\begin{definition}
    Given a rack $R=(X,s)$, define $s\inv:X\to S_X$ by $x\mapsto s_x\inv$. Then the rack $R\op:=(X,s\inv)$ is a rack called the \emph{dual rack} of $R$.
\end{definition}

\begin{definition}
    Given a rack $R$, an \emph{antiautomorphism} of $R$ is a rack isomorphism $\rho:R\bij R\op$. Let $\Anti R$ be the set of antiautomorphisms of $R$. We say that $R$ is \emph{self-dual} if $\Anti R$ is nonempty. 
\end{definition}

Concretely, $\Anti R$ is the set of permutations $\rho\in S_X$ that satisfy
\[
    \rho s_x=s\inv_{\rho(x)}\rho
\]
for all $x\in X$.

\begin{obs}[\cite{antiaut}*{Rem.\ 2.6}]\label{obs:anticomp}
    The composition of two rack antiautomorphisms is a rack automorphism, while the composition of an antiautomorphism and an automorphism (in either order) is an antiautomorphism.
\end{obs}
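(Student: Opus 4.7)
The plan is a direct computation from the defining equations. Recall that an automorphism $\phi\in\Aut R$ satisfies $\phi s_x = s_{\phi(x)}\phi$ and an antiautomorphism $\rho\in\Anti R$ satisfies $\rho s_x = s^{-1}_{\rho(x)}\rho$ for all $x\in X$. Since compositions of bijections are bijective, I only need to verify the conjugation identities. The key preliminary observation, obtained by inverting or rearranging the defining equations, is that $\phi s^{-1}_x = s^{-1}_{\phi(x)}\phi$ and $\rho s^{-1}_x = s_{\rho(x)}\rho$ for all $x\in X$; I would record these once at the outset and then reuse them.

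With those identities in hand, each of the three claims reduces to a one-line calculation. For two antiautomorphisms $\rho,\tau\in\Anti R$,
\[
(\tau\rho)s_x = \tau s^{-1}_{\rho(x)}\rho = s_{\tau\rho(x)}\tau\rho,
\]
so $\tau\rho\in\Aut R$. For $\phi\in\Aut R$ and $\rho\in\Anti R$,
\[
(\rho\phi)s_x = \rho s_{\phi(x)}\phi = s^{-1}_{\rho\phi(x)}\rho\phi,
\qquad
(\phi\rho)s_x = \phi s^{-1}_{\rho(x)}\rho = s^{-1}_{\phi\rho(x)}\phi\rho,
\]
so both $\rho\phi$ and $\phi\rho$ lie in $\Anti R$.

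There is no real obstacle here; the only thing to be careful about is the bookkeeping of whether $s_x$ or $s^{-1}_x$ appears and which element indexes the permutation after conjugation. To keep the proof transparent, I would state the four conjugation rules ($\phi s_x$, $\phi s^{-1}_x$, $\rho s_x$, $\rho s^{-1}_x$) as a short lemma at the start and then invoke them symbolically, rather than rederiving them inside each case.
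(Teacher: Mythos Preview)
Your proof is correct and is exactly the natural direct verification; the paper itself does not supply a proof for this observation (it is recorded with a citation and left to the reader), so your argument is precisely what fills the gap.
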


\begin{example}\label{ex:inv-sd}
    Let $R=(X,s)$ be a rack. Then the following are equivalent:
    \begin{itemize}
        \item $R$ is involutory.
        \item $R$ is equal (not only isomorphic) to the dual rack $R\op$.
        \item $\id_X\in \Anti R$.
    \end{itemize}
    Under the above conditions, $R$ is self-dual, and $\Anti R=\Aut R$.
\end{example}

\begin{example}\label{ex:subconj}
Let $G$ be a group, let $\iota\in S_G$ be the inversion map $g\mapsto g\inv$, and let $\Conj X$ be an inversion-closed subquandle of $\Conj G$. Then the restriction $\iota|_X$ is an involution of $X$. Moreover, $\iota|_X$ is an antiautomorphism of $\Conj X$ because
    \[
    \iota s_y(x)=(yxy\inv)\inv = yx\inv y\inv=\iota(y)\inv \iota(x)\iota(y)=s\inv_{\iota(y)} \iota(x)
    \]
for all $x,y\in X$. In particular, $\Conj X$ is self-dual. Note that $\iota\in\Aut(\Conj X)$ if and only if $\Conj X$ is a kei.
\end{example}

\begin{example}[\cite{tetrahedral}*{Ex.\ 2.16}]\label{ex:tetra-sd}
    The tetrahedral quandle $Q$ from Example \ref{ex:tetra} is self-dual; indeed, $\Anti Q$ contains every transposition in $S_4$.
\end{example}

\begin{example}
    Not all quandles are self-dual.
    For example, let $n\in\Z^+$ and $a\in\Z$ be relatively prime integers, define the quotient ring $X:=(\Z/n\Z)[t^{\pm 1}]/(t-a)$, and define $\phi:X\to X$ by $x\mapsto (1-t)x$. Then $\Alex(X,\phi)$ is called a \emph{linear Alexander quandle}. A result of Nelson \cite{alex}*{Cor.\ 2.8} states that $\Alex(X,\phi)$ is self-dual if and only if $a$ is a square modulo $n/{\gcd(n,1-a)}$.
\end{example}

\subsection{Good involutions} This paper concerns the category of \emph{symmetric racks}, which Kamada \cite{symm-quandles} introduced in 2007.

\begin{definition}\label{def:symmetric}
    Let $R=(X,s)$ be a rack. A \emph{good involution} of $R$ is an involution $\rho\in S_X$ that satisfies the equalities
    \[
    \rho s_x=s_x\rho,\qquad s_{\rho(x)}=s\inv_x
    \]
    for all $x\in X$. We denote the set of good involutions of $R$ by $\Good R$.

    A \emph{symmetric rack} (resp.\ \emph{symmetric quandle}) is a pair $(R,\rho)$ where $R$ is a rack (resp.\ quandle) and $\rho\in \Good R$.
\end{definition}

\begin{obs}\label{obs:iso-good}
    The set of good involutions is a rack invariant.
    Explicitly, if $\phi:R_1\bij R_2$ is a rack isomorphism, then the assignment $\rho\mapsto \phi \rho\phi\inv$ is a bijection from $\Good R_1$ to $\Good R_2$.
\end{obs}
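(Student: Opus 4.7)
The plan is to verify directly that for every $\rho\in\Good R_1$, the conjugate $\phi\rho\phi\inv$ lies in $\Good R_2$, and then to observe that the symmetric assignment $\rho'\mapsto\phi\inv\rho'\phi$---well-defined by the same argument applied to the inverse isomorphism $\phi\inv:R_2\bij R_1$---furnishes a two-sided inverse, so that $\rho\mapsto\phi\rho\phi\inv$ is bijective.

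Write $R_1=(X_1,s)$ and $R_2=(X_2,t)$. The key reformulation of the hypothesis on $\phi$ is the identity $t_{\phi(x)}=\phi s_x\phi\inv$ for all $x\in X_1$; note also that every element of $X_2$ has the form $\phi(x)$ for some $x\in X_1$. Using this, I would check the three defining properties of a good involution in sequence. First, $(\phi\rho\phi\inv)^2=\phi\rho^2\phi\inv=\id_{X_2}$, because $\rho$ is an involution. Second, I would substitute the reformulation into both sides of the desired equation $(\phi\rho\phi\inv)t_{\phi(x)}=t_{\phi(x)}(\phi\rho\phi\inv)$ and cancel the outer $\phi$'s, reducing the claim to $\rho s_x=s_x\rho$, which holds by hypothesis. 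Third, I would compute $(\phi\rho\phi\inv)(\phi(x))=\phi(\rho(x))$, substitute again to obtain $t_{\phi(\rho(x))}=\phi s_{\rho(x)}\phi\inv$, and apply the good-involution condition $s_{\rho(x)}=s\inv_x$ to conclude that this equals $(\phi s_x\phi\inv)\inv=t\inv_{\phi(x)}$.

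This shows that $\phi\rho\phi\inv\in\Good R_2$, and bijectivity then follows formally from the inverse already described. No genuine obstacle arises; the only care needed is in tracking indices as one passes between $X_1$ and $X_2$ via $\phi$. In essence, the observation is a functoriality statement: $\Good$ transforms covariantly under conjugation by rack isomorphisms.
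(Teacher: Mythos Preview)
Your proposal is correct. The paper states this result as an observation without proof, so there is nothing to compare against; your direct verification of the three defining conditions, followed by the formal inverse $\rho'\mapsto\phi\inv\rho'\phi$, is exactly the routine check the author leaves to the reader.
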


Symmetric racks form a category with the following morphisms; symmetric quandles form a full subcategory of this category.

\begin{definition}
    A \emph{symmetric rack homomorphism} between two symmetric racks $(R_1,\rho_1)$ and $(R_2,\rho_2)$ is a rack homomorphism $\varphi$ from $R_1$ to $R_2$ that intertwines the good involutions: \[\varphi\rho_1=\rho_2\varphi.\]
    A \emph{symmetric rack isomorphism} is a bijective symmetric rack homomorphism.
\end{definition}

\begin{example}[\cite{symm-quandles-2}*{Prop.\ 3.1}]\label{ex:trivial}
    Given a quandle $R=(X,s)$, let $I\subset S_X$ the set of all involutions of $X$. Then $\Good R=I$ if and only if $R$ is a trivial quandle.
\end{example}

\begin{example}[\cite{symm-quandles}*{Ex.\ 2.4}]\label{ex:conjinv}
    Let $G$ be a group. Then the inversion map $\iota\in S_G$ defined by $g\mapsto g\inv$ is a good involution of $\Conj G$.
\end{example}

\begin{example}\label{ex:prelim-conj}
    Example \ref{ex:conjinv} immediately implies its own generalization. 
    Let $G$ be a group, let $\iota\in S_G$ be the inversion map $g\mapsto g\inv$, and let $\Conj X$ be a subquandle of $\Conj G$. Then the restriction $\iota|_X$ is a good involution of $\Conj X$ if and only if $\Conj X$ is inversion-closed; cf.\ Section~\ref{sec:conj}.
\end{example}

\begin{example}[\cite{karmakar}*{Ex.\ 2.3}]\label{ex:core-gi}
    Let $G$ be a group, and let $z\in Z(G)$ be a central element of order $1$ or $2$. Then multiplication by $z$ is a good involution of $\Core G$.
\end{example}

\begin{example}
    Let $R=\Alex(G,\phi)$ be a generalized Alexander quandle. A theorem of Taniguchi \cite{taniguchi}*{Thm.\ 1.3} states that $\Good R$ is nonempty if and only if $R$ is a kei.
\end{example}

\subsubsection{Involutory symmetric racks}

We recall a well-known result of Kamada and Oshiro \cite{symm-quandles-2}. 

\begin{prop}[\cite{symm-quandles-2}*{Prop.\ 3.4}]\label{ex:inv-gi}
    Let $R=(X,s)$ be a rack. Then the following statements are equivalent:
    \begin{itemize}
        \item $R$ is involutory.
        \item $\id_X\in \Good R$.
        \item $\Good R$ contains an element of $\Aut R$.
        \item $\Good R$ is a nonempty subset of $\Aut R$.
    \end{itemize}
\end{prop}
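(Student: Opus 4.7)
The plan is to prove the cyclic chain $(1)\Rightarrow(2)\Rightarrow(3)\Rightarrow(1)$ and then close the loop with $(1)\Rightarrow(4)\Rightarrow(3)$, the last implication being immediate since $\Aut R$ is nonempty and $\Good R\subseteq\Aut R$ gives a common element. The key technical observation driving everything is that the two defining equations of a good involution are not independent once you combine them: for any $\rho\in\Good R$ and any $x\in X$, we have $\rho s_x=s_x\rho=s_{\rho(x)}^{-1}\rho$, so $\Good R\subseteq\Anti R$ automatically. This means being an automorphism is the only extra structure that can be present.

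First I would dispatch $(1)\Rightarrow(2)$ by direct computation: if $s_x^2=\id_X$ for all $x$, then $s_x^{-1}=s_x=s_{\id_X(x)}$, and $\id_X$ trivially commutes with every $s_x$, so $\id_X$ satisfies both axioms of Definition~\ref{def:symmetric}. The implication $(2)\Rightarrow(3)$ is immediate because $\id_X\in\Aut R$. For $(3)\Rightarrow(1)$, suppose $\rho\in\Good R\cap\Aut R$. Then the automorphism relation gives $\rho s_x=s_{\rho(x)}\rho$, while the good-involution relation gives $\rho s_x=s_x\rho$; cancelling $\rho$ on the right yields $s_x=s_{\rho(x)}$ for every $x$. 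Combining with the second good-involution axiom $s_{\rho(x)}=s_x^{-1}$ gives $s_x=s_x^{-1}$ for every $x$, i.e.\ $R$ is involutory.

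For $(1)\Rightarrow(4)$, assume $R$ is involutory, so that $s_x^{-1}=s_x$ for all $x\in X$. Then for any $\rho\in\Good R$, the axiom $s_{\rho(x)}=s_x^{-1}$ becomes $s_{\rho(x)}=s_x$, and the commutation axiom $\rho s_x=s_x\rho$ now reads $\rho s_x=s_{\rho(x)}\rho$, which is precisely the automorphism condition. Thus $\Good R\subseteq\Aut R$, and by $(1)\Rightarrow(2)$ we have $\id_X\in\Good R$, so $\Good R$ is nonempty. Finally, $(4)\Rightarrow(3)$ is immediate: a nonempty subset of $\Aut R$ contains an element of $\Aut R$.

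I do not expect any serious obstacle; the entire argument is a short manipulation of the two defining equalities, and the main conceptual point (that every good involution lies in $\Anti R$) is already implicit in the discussion surrounding Example~\ref{ex:inv-sd}. The only minor care needed is to use the bijectivity of $\rho$ when passing from $s_x=s_x^{-1}$ on the image of $\rho$ to the same equality on all of $X$ in the proof of $(3)\Rightarrow(1)$, but since $\rho$ is a permutation this is automatic.
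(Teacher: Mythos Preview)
Your proof is correct and follows essentially the same approach as the paper's. The paper does not spell out a proof here (the result is cited from Kamada--Oshiro) but remarks that Example~\ref{ex:inv-sd} and Proposition~\ref{prop:int} recover it; unwinding that remark yields precisely the manipulations you carry out, just phrased through the observation $\Good R\subseteq\Anti R$ and the identity $\Anti R=\Aut R$ for involutory $R$, rather than directly via the two axioms as you do. One small note: in your $(3)\Rightarrow(1)$ you already obtain $s_x=s_x^{-1}$ for \emph{every} $x\in X$ (not just for $x$ in the image of $\rho$), so the bijectivity remark at the end is unnecessary.
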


\begin{rmk}
    Although Kamada and Oshiro only stated their result for quandles, their proof holds more generally for racks.
\end{rmk}

In fact, we have the following.

\begin{prop}\label{cor:inv-cent}
    If $R$ is involutory, then $\Good R$ is precisely the set $I$ of involutions $\rho:X\to X$ contained in the centralizer $C_{\Aut R}(\Inn R)$.
\end{prop}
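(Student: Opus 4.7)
The plan is to leverage Proposition \ref{ex:inv-gi}, which already shows $\Good R\subseteq\Aut R$ under the involutory hypothesis, and then to verify that the remaining good-involution axioms are equivalent to centralizing $\Inn R$. The crucial simplification is that in an involutory rack $s_x=s_x\inv$, so the compatibility axiom $s_{\rho(x)}=s_x\inv$ reduces to $s_{\rho(x)}=s_x$. Given an automorphism $\rho$, the identity $s_{\rho(x)}=\rho s_x\rho\inv$ holds tautologically, and it equals $s_x$ precisely when $\rho$ commutes with $s_x$. So the two defining equations $\rho s_x=s_x\rho$ and $s_{\rho(x)}=s_x\inv$ collapse to a single condition, namely that $\rho$ centralizes $\Inn R$.

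To establish $\Good R\subseteq I$, I would take $\rho\in\Good R$, note $\rho\in\Aut R$ by Proposition \ref{ex:inv-gi}, and observe that the defining commutation $\rho s_x=s_x\rho$ for each $x\in X$ extends to commutation with every element of $\Inn R=\langle s_x\mid x\in X\rangle$, placing $\rho$ in $C_{\Aut R}(\Inn R)$. Since $\rho$ is an involution by definition, $\rho\in I$.

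For the reverse inclusion, I would start with an involution $\rho\in C_{\Aut R}(\Inn R)$. Because $\rho\in\Aut R$, I would rewrite the automorphism identity $\rho s_x=s_{\rho(x)}\rho$ as $s_{\rho(x)}=\rho s_x\rho\inv$. The centralizer hypothesis then yields $s_{\rho(x)}=s_x$, which equals $s_x\inv$ by the involutory assumption. Combined with $\rho s_x=s_x\rho$ and the fact that $\rho^2=\id_X$, both good-involution axioms are verified, giving $\rho\in\Good R$.

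I do not anticipate a genuine obstacle, since Proposition \ref{ex:inv-gi} already provides the key fact that good involutions are automorphisms in the involutory setting; the remainder is the clean bookkeeping exploiting $s_x\inv=s_x$ and the formula $s_{\rho(x)}=\rho s_x\rho\inv$ for automorphisms.
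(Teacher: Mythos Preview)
Your proposal is correct and follows essentially the same approach as the paper: both directions invoke Proposition \ref{ex:inv-gi} for $\Good R\subseteq I$, and for the reverse inclusion both exploit the automorphism identity $s_{\rho(x)}=\rho s_x\rho^{-1}$ together with the centralizer hypothesis and $s_x=s_x^{-1}$ to verify the good-involution axioms. The paper's write-up is terser (it condenses the converse into the single chain $s_{\rho(x)}=\rho s_{\rho(x)}\rho=\rho^2 s_x=s_x=s_x^{-1}$), but the logic is identical to yours.
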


\begin{proof}
    The containment $\Good R\subseteq I$ follows from Proposition \ref{ex:inv-gi}. Conversely, if $\rho\in I$, then
    \[
    s_{\rho(x)}=\rho s_{\rho(x)}\rho=\rho^2 s_x=s_x=s_x\inv,
    \]
    so $\rho$ is a good involution of $R$.
\end{proof}

\begin{cor}
    Suppose that $R$ is involutory. If $\Aut R$ is a simple group, then $\Good R$ is either $\{\id_X\}$ or the set of all involutory automorphisms of $R$.
\end{cor}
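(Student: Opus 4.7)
The plan is to combine Proposition \ref{cor:inv-cent} with the standard observation that the centralizer of a normal subgroup is itself normal. The paper already notes that $\Inn R$ is a normal subgroup of $\Aut R$, so I would first deduce that the centralizer $C := C_{\Aut R}(\Inn R)$ is also normal in $\Aut R$: conjugation by any $\phi \in \Aut R$ sends $C$ onto the centralizer of $\phi(\Inn R)\phi\inv = \Inn R$, which is $C$ itself.

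Next, I would invoke the simplicity of $\Aut R$ to conclude that $C$ is either the trivial subgroup $\{\id_X\}$ or all of $\Aut R$. In the former case, the only involution lying in $C$ is $\id_X$, so Proposition \ref{cor:inv-cent} forces $\Good R = \{\id_X\}$. In the latter case, the involutions in $C$ are exactly the involutory elements of $\Aut R$, and Proposition \ref{cor:inv-cent} then identifies $\Good R$ with the full set of involutory automorphisms of $R$.

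The argument is essentially immediate once Proposition \ref{cor:inv-cent} is granted; the only conceptual ingredient is the normality of the centralizer. Consequently I do not anticipate a substantive obstacle, though one should take care to note that $\id_X$ itself qualifies as an involution, so that the ``trivial centralizer'' case genuinely yields $\{\id_X\}$ rather than the empty set.
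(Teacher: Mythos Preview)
Your proposal is correct and matches the paper's own proof essentially line for line: the paper also observes that $C_{\Aut R}(\Inn R)$ is normal in $\Aut R$ (since $\Inn R$ is), invokes simplicity, and then applies Proposition~\ref{cor:inv-cent}. Your added remark that $\id_X$ counts as an involution is a fine clarification but not something the paper spells out.
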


\begin{proof}
    Since $\Inn R$ is a normal subgroup of $\Aut R$, so is the centralizer $C_{\Aut R}(\Inn R)$. Hence, the claim follows from Proposition \ref{cor:inv-cent}.
\end{proof}

\section{General results for racks}\label{sec:general}
Let $R=(X,s)$ be a rack. 
In this section, we characterize good involutions $\rho\in\Good R$ as involutory central antiautomorphisms of $R$ (Proposition \ref{prop:int}). As a result, self-duality is a necessary condition for $R$ to have good involutions (Corollary \ref{prop:sd}), and every good involution factors through some automorphism $\phi$ and a given antiautomorphism $\iota$ (Corollary \ref{thm:main}).

\subsection{Good involutions are antiautomorphisms} We record a ``two-implies-three'' rule for good involutions. This will facilitate the proof of Theorem \ref{thm2} later in the paper.

\begin{lemma}\label{prop:pre-int}
    For all permutations $\rho\in S_X$, any pair of the following three conditions implies the~other:
    \begin{enumerate}[\textup{(A\arabic*)}]
        \item\label{Z1} For all $x\in X$, $s_{\rho(x)}\inv=s_x$.
        \item\label{Z2} $\rho$ lies in the centralizer $C_{S_X}(\Inn R)$.
        \item\label{Z3} $\rho$ is an antiautomorphism of $R$.
    \end{enumerate}
\end{lemma}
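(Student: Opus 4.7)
The plan is to observe that the three conditions are so tightly intertwined that any two trivially yield the third, after unwinding the definition of antiautomorphism and the fact that $\Inn R$ is generated by the permutations $s_x$.

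First, I would rewrite \ref{Z2} in the more useful form $\rho s_x=s_x\rho$ for all $x\in X$; this equivalence is immediate because the $s_x$ generate $\Inn R$. The definition of antiautomorphism recasts \ref{Z3} as $\rho s_x=s_{\rho(x)}\inv\rho$ for all $x\in X$. With these two pointwise reformulations in hand, condition \ref{Z1} becomes precisely the statement that the right-hand sides of the reformulated \ref{Z2} and \ref{Z3} agree.

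From here the three implications are one-line calculations. If \ref{Z1} and \ref{Z2} hold, then substituting $s_x=s_{\rho(x)}\inv$ into $\rho s_x=s_x\rho$ gives $\rho s_x=s_{\rho(x)}\inv\rho$, which is \ref{Z3}. If \ref{Z1} and \ref{Z3} hold, then the same substitution in $\rho s_x=s_{\rho(x)}\inv\rho$ yields $\rho s_x=s_x\rho$, which is \ref{Z2}. If \ref{Z2} and \ref{Z3} hold, then comparing $\rho s_x=s_x\rho$ with $\rho s_x=s_{\rho(x)}\inv\rho$ and cancelling the invertible $\rho$ on the right gives $s_x=s_{\rho(x)}\inv$, i.e.\ \ref{Z1}.

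There is no serious obstacle; the only minor point to verify carefully is that commutation with each generator $s_x$ is equivalent to membership in $C_{S_X}(\Inn R)$, which is standard.
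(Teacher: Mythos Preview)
Your proof is correct and is exactly the kind of straightforward verification the paper has in mind; indeed, the paper's own proof consists of the single word ``Straightforward.'' Your explicit reformulations of \ref{Z2} and \ref{Z3} and the three one-line implications are precisely what is needed.
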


\begin{proof}
    Straightforward.
\end{proof}

\begin{prop}\label{prop:int}
    For all involutions $\rho\in S_X$, the following are equivalent:
    \begin{itemize}
        \item $\rho$ is a good involution of $R$.
        \item $\rho$ satisfies at least two out of the three conditions in Lemma \ref{prop:pre-int}.
        \item $\rho$ satisfies all three of the conditions in Lemma \ref{prop:pre-int}.
    \end{itemize}
\end{prop}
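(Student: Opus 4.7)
The plan is to unpack the definition of a good involution and observe that it is precisely the conjunction of \ref{Z1} and \ref{Z2}, after which the equivalence becomes a direct application of Lemma \ref{prop:pre-int}.

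First I would note that, by Definition \ref{def:symmetric}, an involution $\rho\in S_X$ is a good involution of $R$ if and only if both (i) $s_{\rho(x)}=s_x\inv$ for all $x\in X$ and (ii) $\rho s_x=s_x\rho$ for all $x\in X$. Condition (i) is literally \ref{Z1}. Condition (ii) says exactly that $\rho$ commutes with every generator $s_x$ of $\Inn R=\langle s_x\mid x\in X\rangle$; since commuting with a generating set is equivalent to commuting with the group it generates, (ii) is equivalent to $\rho\in C_{S_X}(\Inn R)$, which is \ref{Z2}. Hence $\rho\in\Good R$ if and only if $\rho$ satisfies both \ref{Z1} and \ref{Z2}.

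From here the three bulleted statements collapse into one another via Lemma \ref{prop:pre-int}. If $\rho\in\Good R$, then by the previous paragraph $\rho$ satisfies \ref{Z1} and \ref{Z2}, and then by Lemma \ref{prop:pre-int} it also satisfies \ref{Z3}. Conversely, if $\rho$ satisfies at least two of the three conditions, then by Lemma \ref{prop:pre-int} it satisfies all three; in particular it satisfies \ref{Z1} and \ref{Z2}, so $\rho\in\Good R$. Trivially, satisfying all three implies satisfying at least two, so the three characterizations of $\rho\in\Good R$ are equivalent.

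There is no real obstacle: the content of the proposition is essentially repackaging the definition of a good involution to highlight the role of antiautomorphisms, and the nontrivial combinatorial work has already been absorbed into Lemma \ref{prop:pre-int}. The one point worth stating carefully is the passage from ``$\rho$ commutes with each $s_x$'' to ``$\rho\in C_{S_X}(\Inn R)$,'' which is immediate from the definition of $\Inn R$ as the subgroup generated by the $s_x$.
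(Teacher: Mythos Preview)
Your proposal is correct and follows essentially the same approach as the paper: both observe that the definition of a good involution is precisely \ref{Z1} together with \ref{Z2} (given that $\rho$ is an involution), and then invoke Lemma \ref{prop:pre-int} to obtain the remaining equivalences. Your only addition is spelling out why commuting with each $s_x$ is equivalent to lying in $C_{S_X}(\Inn R)$, which the paper leaves implicit.
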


\begin{proof}
    Combined with the assumption that $\rho$ is an involution, conditions \ref{Z1} and \ref{Z2} form the definition of a good involution. Hence, the claim follows immediately from Lemma \ref{prop:pre-int}.
\end{proof}

\begin{rmk}
    Together, Example \ref{ex:inv-sd} and Proposition \ref{prop:int} recover Proposition \ref{ex:inv-gi}.
\end{rmk}

The following addresses Problem \ref{prob2}.

\begin{cor}\label{prop:sd}
    If $R$ is not self-dual, then $R$ has no good involutions.
\end{cor}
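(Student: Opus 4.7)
The plan is to prove the contrapositive: if $R$ admits a good involution, then $R$ is self-dual. This is essentially immediate from Proposition \ref{prop:int}, so the work amounts to unwinding definitions rather than establishing anything new.

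Concretely, suppose $\rho \in \Good R$. By definition, $\rho$ is an involution in $S_X$, so Proposition \ref{prop:int} applies and implies in particular that $\rho$ satisfies condition \ref{Z3} of Lemma \ref{prop:pre-int}; that is, $\rho$ is an antiautomorphism of $R$. Thus $\Anti R$ contains $\rho$ and is in particular nonempty, which is precisely the definition of $R$ being self-dual. Taking the contrapositive yields the corollary.

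There is no genuine obstacle here: the statement is essentially a restatement of the fact (baked into Proposition \ref{prop:int}) that every good involution is an antiautomorphism, and self-duality is by definition the existence of an antiautomorphism. The only thing to be careful about is that we invoke the appropriate implication in Proposition \ref{prop:int} — namely, the one from ``$\rho \in \Good R$'' to ``$\rho$ satisfies all three conditions of Lemma \ref{prop:pre-int}'' — rather than accidentally needing any of the other directions.
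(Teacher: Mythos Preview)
Your proof is correct and is essentially the same as the paper's: both observe that Proposition~\ref{prop:int} gives $\Good R \subseteq \Anti R$, so if $\Good R$ is nonempty then $\Anti R$ is nonempty, i.e., $R$ is self-dual. The only cosmetic difference is that you phrase this as a contrapositive and track a single element $\rho$, while the paper states the subset containment directly.
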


\begin{proof}
    By definition, $R$ is self-dual if and only if $\Anti R$ is nonempty. 
    By Proposition \ref{prop:int}, $\Good R$ is a subset of $\Anti R$, so the claim follows.
\end{proof}

\begin{rmk}\label{rmk:galkin}
    The inverse of Corollary \ref{prop:sd} is not true in general, even if $\Anti R$ contains involutions. For example, all transpositions in the symmetric group $S_4$ are antiautomorphisms of the tetrahedral quandle $Q$ from Example \ref{ex:tetra}. Nevertheless, Table \ref{table:counts} implies that $Q$ has no good involutions; cf.\ Example \ref{ex:a4-2}. In fact, there are infinitely many examples of this nature; see Proposition \ref{prop:galkin}.
\end{rmk}

\subsection{Factoring good involutions}

In light of Corollary \ref{prop:sd}, assume for the rest of this section that $R$ is self-dual, and fix an antiautomorphism $\iota\in \Anti R$. 

\begin{prop}\label{prop:tildebij}
    The assignment $\phi\mapsto \iota\phi$ is a bijection from $\Aut R$ to $\Anti R$.
\end{prop}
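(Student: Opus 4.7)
The plan is to verify well-definedness, injectivity, and surjectivity of $\phi\mapsto\iota\phi$ separately, using Observation \ref{obs:anticomp} as the engine for the first and third steps.

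For well-definedness, I would note that for each $\phi\in\Aut R$, the composition $\iota\phi$ is an antiautomorphism by Observation \ref{obs:anticomp} (an automorphism followed by an antiautomorphism is an antiautomorphism). For injectivity, if $\iota\phi_1=\iota\phi_2$, then since $\iota$ is a permutation of $X$, left-cancellation yields $\phi_1=\phi_2$.

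For surjectivity, given $\rho\in\Anti R$, I would propose the preimage $\phi:=\iota\inv\rho$. The key auxiliary fact needed is that $\iota\inv$ is itself an antiautomorphism of $R$: starting from the relation $\iota s_x=s\inv_{\iota(x)}\iota$, I would conjugate by $\iota\inv$ on both sides and substitute $y:=\iota(x)$ to obtain $\iota\inv s_y=s\inv_{\iota\inv(y)}\iota\inv$ for all $y\in X$. Once this is in hand, another application of Observation \ref{obs:anticomp} (a composition of two antiautomorphisms is an automorphism) gives $\phi\in\Aut R$, and by construction $\iota\phi=\iota\iota\inv\rho=\rho$.

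The only genuine obstacle is verifying that $\Anti R$ is closed under taking inverses, which as described above reduces to a short symbolic manipulation with the antiautomorphism identity. Everything else follows formally from Observation \ref{obs:anticomp} and the fact that $\iota$ is a bijection.
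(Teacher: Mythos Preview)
Your proof is correct and follows essentially the same approach as the paper's, which simply invokes Observation \ref{obs:anticomp} and the fact that $R\cong R\op$ (i.e., the standard torsor principle that the set of isomorphisms between two isomorphic objects is a left coset of the automorphism group). Your surjectivity step takes a slight detour by verifying that $\iota\inv\in\Anti R$; a marginally shorter alternative is to observe directly that $\iota\inv:R\op\to R$ is an isomorphism (being the inverse of one), so $\iota\inv\rho:R\to R\op\to R$ is an automorphism without any further computation.
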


\begin{proof}
    Since $R\cong R\op$, this follows from Observation \ref{obs:anticomp} and the definition of $\Anti R$.
\end{proof}

The following corollaries follow directly from Propositions \ref{prop:int} and \ref{prop:tildebij}. In particular, Corollary \ref{thm:main} states that knowing the elements of $\Aut R$ is sufficient for computing $\Good R$. 

\begin{cor}\label{thm:main}
    For all involutions $\rho\in S_X$, $\rho$ is a good involution of $R$ if and only if there exists an automorphism $\phi\in \Aut R$ such that $\rho=\iota\phi$ and at least one of the conditions \ref{Z1}, \ref{Z2} hold.
\end{cor}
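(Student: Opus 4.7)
The plan is to combine Proposition \ref{prop:int} with the bijection from Proposition \ref{prop:tildebij}; both directions are short once these are in hand.

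For the forward direction, I would start with an involution $\rho \in \Good R$. Proposition \ref{prop:int} immediately gives that $\rho$ satisfies all three conditions \ref{Z1}, \ref{Z2}, \ref{Z3}; in particular, $\rho \in \Anti R$. Since Proposition \ref{prop:tildebij} tells us that $\phi \mapsto \iota\phi$ is a bijection $\Aut R \bij \Anti R$, there is a unique $\phi \in \Aut R$ with $\rho = \iota\phi$. Moreover, at least one of \ref{Z1}, \ref{Z2} holds automatically, since in fact both do.

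For the reverse direction, suppose $\rho = \iota\phi$ for some $\phi \in \Aut R$ and at least one of \ref{Z1}, \ref{Z2} holds. By Proposition \ref{prop:tildebij}, $\rho \in \Anti R$, i.e., condition \ref{Z3} holds. Together with the hypothesized one of \ref{Z1}, \ref{Z2}, this gives two of the three conditions of Lemma \ref{prop:pre-int}. Applying Proposition \ref{prop:int} (together with the hypothesis that $\rho$ is an involution), we conclude $\rho \in \Good R$.

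There is no real obstacle here: the content of the result is entirely packaged into the two prior propositions, and the corollary is just the assembly. The only thing to be mindful of is invoking Proposition \ref{prop:int} in the ``two-implies-good-involution'' direction, which requires that we already know $\rho$ is an involution (given by hypothesis).
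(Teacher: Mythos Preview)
Your proof is correct and matches the paper's approach exactly: the paper simply states that the corollary follows directly from Propositions \ref{prop:int} and \ref{prop:tildebij}, and your argument spells out precisely that derivation.
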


\begin{cor}\label{cor:sharp}
    The number of good involutions of $R$ is at most the order of $\Aut R$:
    \[
    |{\Good R}|\leq |{\Anti R}|= |{\Aut R}|.
    \]
    Equality holds if and only if $\Anti R$ only contains involutions in the centralizer $C_{S_X}(\Inn R)$.
\end{cor}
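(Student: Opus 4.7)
The proof is short and proceeds in two steps, essentially unpacking the statements already established.

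\textbf{Step 1 (the inequality).} By Proposition \ref{prop:int}, every good involution is in particular an antiautomorphism, so $\Good R\subseteq\Anti R$, giving $|\Good R|\leq|\Anti R|$. The equality $|\Anti R|=|\Aut R|$ is immediate from Proposition \ref{prop:tildebij}, which exhibits $\phi\mapsto\iota\phi$ as a bijection $\Aut R\bij\Anti R$. This takes care of the first sentence of the corollary.

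\textbf{Step 2 (the equality criterion).} Since $\Good R\subseteq\Anti R$ always, equality of cardinalities is equivalent to the set equality $\Good R=\Anti R$. I would characterize membership in $\Good R$ via Proposition \ref{prop:int}: a permutation $\rho\in S_X$ lies in $\Good R$ if and only if $\rho$ is an involution satisfying all three of \ref{Z1}, \ref{Z2}, \ref{Z3}. For any $\rho\in\Anti R$, condition \ref{Z3} is automatic, and by Lemma \ref{prop:pre-int} condition \ref{Z2} together with \ref{Z3} already forces \ref{Z1}. Therefore $\rho\in\Anti R$ belongs to $\Good R$ if and only if $\rho$ is an involution lying in $C_{S_X}(\Inn R)$. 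The set equality $\Good R=\Anti R$ is thus equivalent to the assertion that every element of $\Anti R$ is an involutory element of the centralizer $C_{S_X}(\Inn R)$, which is exactly the condition stated.

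\textbf{Remarks on difficulty.} Nothing here requires new work: the inequality is the containment $\Good R\subseteq\Anti R$ combined with Proposition \ref{prop:tildebij}, and the equality case is a direct rephrasing of Proposition \ref{prop:int} together with the implication $(\ref{Z2})\wedge(\ref{Z3})\Rightarrow(\ref{Z1})$ from Lemma \ref{prop:pre-int}. The only subtle point worth flagging is that one must invoke Lemma \ref{prop:pre-int} to see that \ref{Z1} is free once one has an antiautomorphism in the centralizer, so that the equality criterion can be stated purely in terms of involutivity and centralization without also mentioning \ref{Z1}.
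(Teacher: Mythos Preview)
Your proof is correct and follows exactly the paper's route: the paper simply states that the corollary follows directly from Propositions \ref{prop:int} and \ref{prop:tildebij}, and you have faithfully unpacked those two ingredients, including the use of Lemma \ref{prop:pre-int} to drop condition \ref{Z1} from the equality criterion.
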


\begin{rmk}
    The bound in Corollary \ref{cor:sharp} is sharp in the sense that there exist quandles $R$ of infinitely many orders $n$ such that ${\Good R}= {\Anti R}$. To see this, let $n\geq 2$ be an even integer, and define a rack structure $s$ on the set $X=\{1,2,\dots,n\}$ in cycle notation by
    \[
    s_1,s_2:=\id_X;\qquad s_k,s_{k+1}:=(k-2\;\;k-1)
    \]
    for all odd $3\leq k<n$. Then $R:=(X,s)$ is a kei, so Example \ref{ex:inv-sd} implies that $\Anti R$ is the elementary abelian $2$-group
    \[
    \Anti R=\Aut R=\langle (12),(34),\dots,(n-1\;\; n) \rangle \cong (\Z/2\Z)^{n/2}.
    \]
    In particular, $R$ satisfies the last condition in Corollary \ref{cor:sharp}, so ${\Good R}= {\Anti R}$.
\end{rmk}

\section{Faithful symmetric racks}\label{sec:special}
In this section, we study Problems \ref{prob1} and \ref{prob2} for faithful racks. 

\subsection{General results} 

\begin{prop}\label{prop:faithful}
    If $R$ is faithful, then $R$ has at most one good involution. 
\end{prop}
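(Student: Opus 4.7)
The plan is to use the second defining condition of a good involution, namely $s_{\rho(x)}=s_x\inv$, in combination with faithfulness to force uniqueness pointwise.

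Suppose $\rho_1,\rho_2\in\Good R$. For every $x\in X$, both good involutions satisfy
\[
s_{\rho_1(x)}=s_x\inv=s_{\rho_2(x)}.
\]
Since $R$ is faithful, the structure map $s:X\to S_X$ is injective, so $\rho_1(x)=\rho_2(x)$. As this holds for all $x\in X$, we conclude $\rho_1=\rho_2$.

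I expect no real obstacle: the key observation is that condition \ref{Z1} from Lemma \ref{prop:pre-int} already pins down the value of $\rho(x)$ as the unique preimage of $s_x\inv$ under $s$, whenever such a preimage exists. Faithfulness is exactly what guarantees that this preimage is unique, making the involution condition and the centralizer condition irrelevant to the uniqueness argument. One might also phrase the argument as follows: the assignment $\rho\mapsto(x\mapsto s\inv(s_x\inv))$ shows that a good involution, if it exists at all, is completely determined by $s$, which already gives the at-most-one conclusion.
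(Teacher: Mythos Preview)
Your proof is correct and essentially identical to the paper's own argument: both take two good involutions, use the defining condition $s_{\rho(x)}=s_x\inv$ to obtain $s_{\rho_1(x)}=s_{\rho_2(x)}$, and then invoke injectivity of $s$ to conclude. The additional commentary you provide about condition \ref{Z1} and the preimage under $s$ is accurate but not needed for the proof.
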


\begin{proof}
    Let $\rho,\rho'\in \Good R$. For all $x\in X$,
    \[
    s_{\rho(x)}=s\inv_x=s_{\rho'(x)},
    \]
    so $\rho(x)=\rho'(x)$ because $R$ is faithful. Therefore, $\rho=\rho'$.
\end{proof}

\begin{rmk}
    Example \ref{ex:tetra} and Proposition \ref{prop:galkin} give examples of faithful, self-dual quandles with no good involutions.
    
    On the other hand, Clark et al.\ \cite{galkin-symm}*{Prop.\ 5.13} provided infinitely many faithful, self-dual quandles that do have a (necessarily unique) good involution. The next two results do the same.
\end{rmk}

\begin{cor}\label{cor:centerless}
    Let $G$ be a group, let $\Conj X$ be an inversion-closed subquandle of $\Conj G$, and let $\langle X\rangle$ be the subgroup of $G$ generated by $X$. If $\langle X\rangle$ is centerless, then $\Good(\Conj X)=\{\iota|_X\}$.
\end{cor}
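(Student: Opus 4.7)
The plan is to combine three ingredients already in place: the existence of $\iota|_X$ as a good involution, faithfulness of $\Conj X$ under the centerless hypothesis, and uniqueness of good involutions for faithful racks.

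First I would invoke Example \ref{ex:prelim-conj}: since $\Conj X$ is inversion-closed by assumption, the restriction $\iota|_X$ is a good involution of $\Conj X$. Thus $\Good(\Conj X)$ is nonempty and contains $\iota|_X$, giving the inclusion $\{\iota|_X\}\subseteq \Good(\Conj X)$.

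Next I would establish that $\Conj X$ is faithful. By Observation \ref{obs:conj2}, it suffices to show that the action of $\langle X\rangle$ on $X$ by conjugation is faithful. The kernel of this action is the centralizer $C_{\langle X\rangle}(X)$ of $X$ in $\langle X\rangle$, and because $X$ generates $\langle X\rangle$, an element of $\langle X\rangle$ centralizes $X$ if and only if it centralizes all of $\langle X\rangle$. Hence the kernel equals $Z(\langle X\rangle)$, which is trivial by the centerless hypothesis, so $\Conj X$ is faithful.

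Finally I would apply Proposition \ref{prop:faithful} to conclude that $|\Good(\Conj X)|\leq 1$. Combined with the first step, this forces $\Good(\Conj X)=\{\iota|_X\}$. There is no real obstacle here; the argument is a short assembly of the preceding results, with the only subtle point being the identification of the kernel of the conjugation action with $Z(\langle X\rangle)$ via the fact that $X$ generates $\langle X\rangle$.
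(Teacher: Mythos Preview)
Your proposal is correct and follows essentially the same approach as the paper: both invoke Example \ref{ex:prelim-conj} for existence of $\iota|_X$, Observation \ref{obs:conj2} for faithfulness of $\Conj X$, and Proposition \ref{prop:faithful} for uniqueness. Your version is slightly more detailed in spelling out why the kernel of the conjugation action equals $Z(\langle X\rangle)$, which the paper simply asserts.
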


\begin{proof}    
    By assumption, the action of $\langle X\rangle$ on $X$ by conjugation is faithful, so Observation \ref{obs:conj2} states that $\Conj X$ is faithful. Hence, the claim follows from Example \ref{ex:prelim-conj} and Proposition \ref{prop:faithful}. 
\end{proof}

\begin{rmk}
    An inverse to Corollary \ref{cor:centerless} holds; see Corollary \ref{cor:centerless2}.
\end{rmk}

\begin{cor}\label{cor:faithful-inv}
    If $R$ is faithful and involutory, then $\Good R=\{\id_X\}$.
\end{cor}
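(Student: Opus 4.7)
The plan is to combine the two ingredients that are already in place. First, since $R$ is involutory, Proposition \ref{ex:inv-gi} immediately gives $\id_X \in \Good R$, so $\Good R$ is nonempty. Second, since $R$ is faithful, Proposition \ref{prop:faithful} states that $\Good R$ contains at most one element. Putting these two facts together forces $\Good R = \{\id_X\}$.

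There is essentially no obstacle here: the corollary is a one-line consequence of the two preceding results in the section combined with the Kamada--Oshiro characterization recalled as Proposition \ref{ex:inv-gi}. The only thing to check is that Proposition \ref{prop:faithful}'s hypothesis (faithfulness) and Proposition \ref{ex:inv-gi}'s hypothesis (involutivity) are both available, which they are by assumption.
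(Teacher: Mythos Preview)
Your argument is correct and matches the paper's proof exactly: it simply invokes Propositions \ref{ex:inv-gi} and \ref{prop:faithful}.
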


\begin{proof}
This follows from Propositions \ref{ex:inv-gi} and \ref{prop:faithful}.
\end{proof}

\begin{example}\label{eexalex-faith}
    Let $G$ be a group, and let $\phi\in\Aut G$ be an automorphism of $G$. By a theorem of Taniguchi \cite{taniguchi}*{Thm.\ 1.3}, the generalized Alexander quandle $\Alex(G,\phi)$ has a good involution if and only if it is a kei. In this case, if the only fixed point of $\phi$ is the identity element $1\in G$, then Example \ref{ex:alex} and Corollary \ref{cor:faithful-inv} imply that $\Good(\Alex(G,\phi))=\{\id_G\}$.

    In particular, if $G$ is abelian and $2$-torsionless, then the only good involution of the Takasaki kei $T(G)$ is the identity map $\id_G$; cf.\ Corollary \ref{cor:taka}. This generalizes a result of Kamada and Oshiro \cite{symm-quandles-2}*{Thm.\ 3.2} for dihedral quandles $R_n$ of odd order.
\end{example}

\subsection{Symmetric Galkin quandles}

Generalizing a construction of Galkin \cite{galkin} in 1988, Clark et al.\ \cite{galkin-symm} introduced a family of quandles $\G(A,c)$ called \emph{Galkin quandles} in 2013. Galkin quandles were studied extensively in \citelist{\cite{galkin-1}\cite{galkin-symm}}. 

In particular, Clark et al.\ \cite{galkin-symm}*{Prop.\ 5.13} constructed good involutions of certain Galkin quandles. We show that no other good involutions of Galkin quandles exist. Aside from addressing Problems \ref{prob1} and \ref{prob2}, this is also motivated by finding infinitely many quandles that have involutory antiautomorphisms and no good involutions; cf.\ Remark \ref{rmk:galkin}.

Throughout this section, let $A$ be an additive abelian group.

\begin{definition}[\cite{galkin-1}]
    Fix an element $c\in A$. Define functions ${\mu:\Z/3\Z\to\Z}$ and $\tau:\Z/3\Z\to A$~by \[\mu(n):=\begin{cases} 2&\text{if } n=0,\\ -1& \text{if } n=1,2,\end{cases} \qquad \tau(n):=\begin{cases} 0& \text{if } n=0,1,\\ c& \text{if } n=2.\end{cases}\] 
    Define a rack structure $s$ on the direct product $\Z/3\Z\times A$ by
    \[
    s_{(y,b)}(x,a):=(2y-x,-a+\mu(x-y)b+\tau(x-y)).
    \]
    Then the pair $\G(A,c):=(\Z/3\Z\times A,s)$ is a quandle called a \emph{Galkin quandle}.
\end{definition}

\begin{rmk}
    A similar definition of Galkin quandles also exists \cite{galkin-symm}*{Def.\ 3.4}; its equivalence to the above definition was proven in \cite{galkin-symm}*{Lem.\ 3.6}.
\end{rmk}

\begin{rmk}[\cite{galkin-symm}*{Lem.\ 5.2, Prop.\ 5.11, Cor.\ 5.12}]\phantom{a}
\begin{itemize}
    \item All Galkin quandles are faithful.
    \item For all Galkin quandles $\G(A,c)$, the involution $(x,a)\mapsto (-x,a)$ is an antiautomorphism of $\G(A,c)$. In particular, all Galkin quandles are self-dual.
    \item $\G(A,c)$ is involutory if and only if $c=0$.
\end{itemize}
\end{rmk}

\subsubsection{Results}
Clark et al.\ \cite{galkin-symm}*{Prop.\ 5.13, cf.\ Cor.\ 5.12} constructed good involutions of certain Galkin quandles. Namely, let $c\in A$ be an element such that $2c=0$, and define
\[
\rho_c(x,a):=(x,a+c)
\]
for all $(x,a)\in \Z/3\Z\times A$. Then $\rho_c$ is a good involution of $\G(A,c)$.

\begin{prop}\label{prop:galkin}
    If $\G(A,c)$ is a Galkin quandle, then
    \[
    \Good(\G(A,c))=\begin{cases}
        \{\rho_c\}&\text{if } 2c=0,\\
        \emptyset&\text{otherwise.}
    \end{cases}
    \]
\end{prop}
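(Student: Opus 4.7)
\medskip

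The plan is to combine faithfulness of $\G(A,c)$ with the ``$s_{\rho(x)}=s_x\inv$'' condition from Proposition \ref{prop:int} to explicitly pin down the only candidate for a good involution, and then check when that candidate actually works. Since Galkin quandles are faithful, Proposition \ref{prop:faithful} gives $|\Good(\G(A,c))|\leq 1$, so everything reduces to determining when such an involution exists. Clark et al.'s construction already exhibits $\rho_c$ as a good involution when $2c=0$, so the real task is to rule out good involutions when $2c\neq 0$ and, in the process, confirm that $\rho_c$ is the unique candidate when $2c=0$.

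The key computation I would carry out first is the inverse of $s_{(y,b)}$. Solving $s_{(y,b)}(x',a')=(x,a)$ coordinate-wise (and using that $\mu$ is an even function on $\Z/3\Z$, while $\tau(-1)=c$ and $\tau(-2)=0$) yields
\[
s_{(y,b)}\inv(x,a)=(2y-x,\,-a+\mu(x-y)b+\tau(y-x)).
\]
Then, assuming $\rho\in\Good(\G(A,c))$, Proposition \ref{prop:int}\ref{Z1} forces $s_{\rho(y,b)}=s_{(y,b)}\inv$ for every $(y,b)$. Writing $\rho(y,b)=(y',b')$ and comparing the two expressions for all $(x,a)$, the first coordinate gives $2y'=2y$ in $\Z/3\Z$, hence $y'=y$, and the second coordinate reduces, for each $n:=x-y\in\Z/3\Z$, to
\[
\mu(n)(b'-b)=\tau(-n)-\tau(n).
\]

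Evaluating this equation at $n=1$ yields $b'-b=-c$, while $n=2$ yields $b'-b=c$; together these force $2c=0$. Thus, if $2c\neq 0$ then no good involution can exist, proving $\Good(\G(A,c))=\emptyset$ in that case. If instead $2c=0$, the same equations collapse to $b'=b+c$, so the only candidate is $\rho=\rho_c$, and this candidate is indeed a good involution by Clark et al.\ \cite{galkin-symm}*{Prop.\ 5.13}. Combined with the uniqueness from Proposition \ref{prop:faithful}, this gives $\Good(\G(A,c))=\{\rho_c\}$.

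The only mildly delicate step is the inverse computation and the bookkeeping of $\mu$ and $\tau$ at negative arguments in $\Z/3\Z$; everything else is a direct application of Propositions \ref{prop:int} and \ref{prop:faithful} together with the cited existence result. I do not anticipate a conceptual obstacle here, only a need for careful arithmetic in $\Z/3\Z$.
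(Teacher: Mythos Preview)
Your proposal is correct and follows essentially the same approach as the paper: both arguments impose the condition $s_{\rho(y,b)}=s_{(y,b)}\inv$, compute $s_{(y,b)}\inv$ explicitly, compare coordinates to force the first component of $\rho(y,b)$ to equal $y$, and then evaluate the second-coordinate identity at the two nonzero values of $x-y$ to obtain $2c=0$ and $\rho=\rho_c$. Your invocation of faithfulness via Proposition~\ref{prop:faithful} is harmless but redundant, since your direct computation already pins down the unique candidate.
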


\begin{proof}
    If $2c=0$, then $\rho_c\in \Good(\G(A,c))$ by the construction of Clark et al. Conversely, for all good involutions $\rho=(\rho_1,\rho_2)$ of $\G(A,c)$ and for all $(x,a),(y,b)\in \Z/3\Z\times A$, the expression
    \[
    s_{\rho(y,b)}(x,a)=(2\rho_1(y)-x,-a+\mu(x-\rho_1(y))\rho_2(b)+\tau(x-\rho_1(y))
    \]
    equals the expression
    \[
    s_{(y,b)}\inv(x,a)=(2y-x,-a+\mu(y-x)b+\tau(y-x)).
    \]
    In particular, $\rho_1$ must be the identity map $\id_{\Z/3\Z}$. Letting $z:=x-y$, we obtain
    \[
    \mu(z)\rho_2(b)+\tau(z)=\mu(-z)b+\tau(-z).
    \]
    Taking $z$ to be $1$ and $2$ respectively yields
    \[
    -\rho_2(b)=-b+c,\qquad -\rho_2(b)+c=-b,
    \]
    from which we deduce that $2c=0$ and $\rho_2(b)=b+c$ for all $b\in A$. In particular, $\rho=\rho_c$.
\end{proof}

\begin{cor}
    If $A$ is $2$-torsionless, then for all $c\in A$, there are no good involutions of $\G(A,c)$.
\end{cor}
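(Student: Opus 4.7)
My plan is to invoke Proposition \ref{prop:galkin} directly. That proposition states $\Good(\G(A,c)) = \{\rho_c\}$ if $2c = 0$ and $\Good(\G(A,c)) = \emptyset$ otherwise. Since $A$ is $2$-torsionless, the relation $2a = 0$ forces $a = 0$ for every $a \in A$; in particular, $2c \neq 0$ whenever $c$ is nonzero. The ``otherwise'' clause of Proposition \ref{prop:galkin} then yields $\Good(\G(A,c)) = \emptyset$ for all nonzero $c \in A$.

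The only case requiring comment is $c = 0$. Here $2c = 0$ holds trivially, and $\rho_0 = \id_{\Z/3\Z \times A}$ is a good involution of the involutory quandle $\G(A,0)$; since Galkin quandles are faithful, Corollary \ref{cor:faithful-inv} moreover forces $\Good(\G(A,0)) = \{\id_{\Z/3\Z \times A}\}$. Strictly speaking, then, the corollary applies substantively to nonzero $c$, and I read the statement as implicitly excluding this trivial involutory case (or as asserting the nonexistence of \emph{nontrivial} good involutions).

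There is no real obstacle: the corollary is a one-line consequence of Proposition \ref{prop:galkin} together with the definition of $2$-torsionless. The point of recording it is to emphasize the breadth of the preceding nonexistence result, which applies in particular to $A = \Z/n\Z$ for every odd $n$, providing infinitely many faithful, self-dual quandles without nontrivial symmetric structures (cf.\ Remark \ref{rmk:galkin}).
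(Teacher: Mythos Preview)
Your approach is exactly the paper's: the corollary is stated without proof as an immediate consequence of Proposition \ref{prop:galkin}. You also correctly notice that the statement is literally false at $c=0$, since $\G(A,0)$ is involutory and hence $\Good(\G(A,0))=\{\id\}$ by Proposition \ref{ex:inv-gi} (or by Proposition \ref{prop:galkin} itself, which gives $\rho_0=\id$). This is a small slip in the paper; the intended reading, in light of Remark \ref{rmk:galkin} and the surrounding discussion about noninvolutory self-dual quandles lacking good involutions, is that no \emph{nontrivial} good involutions exist, equivalently that $\Good(\G(A,c))=\emptyset$ for all nonzero $c$. Your reading and your caveat are on point.
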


\section{Symmetric conjugation quandles}\label{sec:conj} 

In this section, we prove Theorem \ref{thm2} (Theorem \ref{thm:conj}) and two corollaries. We also consider some special cases along the way (Proposition \ref{cor:conj-none}, Corollary \ref{cor:centerless2}).
Henceforth, let $G$ be a group, let $\Conj X$ be a subquandle of $\Conj G$, and let $H:=\langle X\rangle$ be the subgroup of $G$ generated by $X$.

\subsection{Preliminary results} To prove Theorem \ref{thm2}, we use the following lemmas.

\begin{lemma}\label{cor:conjmain}
    For all involutions $\rho\in S_X$, $\rho$ is a good involution of $\Conj X$ if and only if $\rho\in \Anti(\Conj X)$ and $\rho(x)x\in Z(H)$ for all $x\in X$.
\end{lemma}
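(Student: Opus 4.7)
My plan is to apply Proposition \ref{prop:int} directly. Since $\rho\in S_X$ is assumed to be an involution, Proposition \ref{prop:int} states that $\rho$ is a good involution of $\Conj X$ if and only if $\rho$ satisfies all three of conditions \ref{Z1}, \ref{Z2}, \ref{Z3} from Lemma \ref{prop:pre-int}. Condition \ref{Z3} is precisely the statement that $\rho\in\Anti(\Conj X)$, so by the two-implies-three principle of Lemma \ref{prop:pre-int}, it will suffice to show that, for involutions $\rho\in\Anti(\Conj X)$, condition \ref{Z1}---namely $s_{\rho(x)}=s_x\inv$ for all $x\in X$---is equivalent to the condition $\rho(x)x\in Z(H)$ for all $x\in X$.

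The key tool for this translation is Example \ref{ex:innconj}, which provides a group isomorphism $\Inn(\Conj X)\bij H/Z(H)$ sending each generator $s_x$ to the coset $xZ(H)$. Under this isomorphism, condition \ref{Z1} at a fixed $x\in X$ reads $\rho(x)Z(H)=x\inv Z(H)$, equivalently $x\rho(x)\in Z(H)$. Because $Z(H)$ is closed under conjugation by elements of $H$, this is in turn equivalent to $\rho(x)x=x\inv(x\rho(x))x\in Z(H)$, completing the desired translation.

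Combining the two steps yields both directions of the lemma. If $\rho$ is a good involution, then $\rho\in\Anti(\Conj X)$ holds and condition \ref{Z1} holds, so $\rho(x)x\in Z(H)$ for all $x\in X$. Conversely, if $\rho\in\Anti(\Conj X)$ is an involution with $\rho(x)x\in Z(H)$ for every $x\in X$, then conditions \ref{Z3} and \ref{Z1} both hold, so condition \ref{Z2} follows from Lemma \ref{prop:pre-int}, and Proposition \ref{prop:int} then guarantees that $\rho$ is a good involution.

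The main subtlety, rather than a genuine obstacle, is that $x\inv$ need not lie in $X$, so one cannot directly equate $s_x\inv$ with some $s_{x\inv}$ inside $\Inn(\Conj X)\leq S_X$. Passing through the quotient $H/Z(H)$ via Example \ref{ex:innconj} circumvents this issue, since there $s_x\inv$ corresponds unambiguously to the coset $x\inv Z(H)$.
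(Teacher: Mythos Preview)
Your proof is correct and follows essentially the same approach as the paper: both hinge on Proposition \ref{prop:int} and the isomorphism $\Inn(\Conj X)\cong H/Z(H)$ from Example \ref{ex:innconj} to translate condition \ref{Z1} into the centrality condition $\rho(x)x\in Z(H)$. The only cosmetic difference is that the paper handles the forward direction by a direct manipulation of the equation $s_x(y)=s_{\rho(x)}\inv(y)$ rather than invoking the isomorphism, whereas you use the isomorphism uniformly in both directions.
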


\begin{proof}
    ``$\implies$'' If $\rho\in \Good(\Conj X)$, then $\rho\in\Anti(\Conj X)$ by Proposition \ref{prop:int}. We have to show that $\rho(x)xy=y\rho(x)x$ for all $x,y\in X$. Since $\rho$ is a good involution,
    \[
    xyx\inv=s_x(y)=s\inv_{\rho(x)}(y)=\rho(x)\inv y\rho(x),
    \]
    and the desired equality follows.

    ``$\impliedby$'' Suppose that $\rho\in \Anti(\Conj X)$ and $\rho(x)x\in Z(H)$ for all $x\in X$. By Proposition \ref{prop:int}, it suffices to show that $s_{\rho(x)}\inv= s_x$ for all $x\in X$. Since $\rho(x)x\in Z(H)$, the equality
    \[
    \rho(x)\inv Z(H)=xZ(H)
    \]
    holds in the quotient group $H/Z(H)$. It follows from Example \ref{ex:innconj} that
    \[
    s_{\rho(x)\inv}=s_x.
    \]
    By the definition of $\Conj X$, the left-hand side equals $s_{\rho(x)}\inv$. This completes the proof.
\end{proof}

\begin{obs}\label{obs:autconj}
    An antiautomorphism $\rho$ of $\Conj X$ is a precisely a permutation of $X$ satisfying
\[    \rho(yxy\inv)=\rho(y)\inv\rho(x)\rho(y)\] for all $x,y\in X$.
\end{obs}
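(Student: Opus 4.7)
The plan is to unfold the abstract definition of an antiautomorphism directly in the setting of the conjugation quandle. By definition, a permutation $\rho\in S_X$ lies in $\Anti(\Conj X)$ if and only if the identity $\rho s_y = s_{\rho(y)}\inv\rho$ holds in $S_X$ for every $y\in X$. Equivalently, evaluating both sides at an arbitrary $x\in X$, the condition becomes $\rho(s_y(x))=s_{\rho(y)}\inv(\rho(x))$ for all $x,y\in X$.

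Next, I would invoke Lemma \ref{lem:conj} to recall that the rack structure on $\Conj X$ is the restriction of the one on $\Conj G$, so $s_y(x)=yxy\inv$; consequently $s_y\inv(x)=y\inv xy$. Substituting these two explicit formulas into the displayed equation turns it into $\rho(yxy\inv)=\rho(y)\inv\rho(x)\rho(y)$, which is exactly the identity claimed in the observation.

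Conversely, if a permutation $\rho$ of $X$ satisfies the displayed identity for all $x,y\in X$, then reading the substitution backwards recovers $\rho s_y=s_{\rho(y)}\inv\rho$ at every $y$, which is the defining condition for $\rho\in\Anti(\Conj X)$. Since every step is a direct substitution, no genuine obstacle arises; the only thing to keep in mind is that the bijectivity of $\rho$ is built into the word ``permutation'' and therefore does not need to be reverified.
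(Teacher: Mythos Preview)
Your proposal is correct and is precisely the intended argument: the paper records this as an observation without proof because it amounts to unfolding the definition of an antiautomorphism and substituting the conjugation formulas $s_y(x)=yxy\inv$ and $s_y\inv(x)=y\inv xy$. One tiny quibble: you do not actually need Lemma~\ref{lem:conj} here, since the rack structure on $\Conj X$ is by definition the restriction $s|_X$; that lemma concerns \emph{which} subsets $X$ yield subquandles, not what the operation is.
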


\begin{lemma}\label{lem:classftn}
    Let $\zeta:X\to Z(H)$ be a function, and suppose that the function $\rho:X\to H$ defined~by \[\rho(x):=\ (\zeta(x)x)\inv\] is a permutation of $X$. Then $\rho\in\Anti(\Conj X)$ if and only if $\zeta(yxy\inv)=\zeta(x)$ for all $x,y\in X$.
\end{lemma}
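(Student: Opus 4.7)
The plan is to reduce the claim to a direct computation via Observation \ref{obs:autconj}. Since $\rho$ is assumed to be a permutation of $X$, that observation says $\rho\in\Anti(\Conj X)$ if and only if
\[
\rho(yxy\inv) = \rho(y)\inv\,\rho(x)\,\rho(y)
\]
for all $x,y\in X$; note that $yxy\inv\in X$ by Lemma \ref{lem:conj}, so $\zeta(yxy\inv)$ is well-defined.

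I would then expand both sides using the formula $\rho(x) = (\zeta(x)x)\inv$ and exploit the hypothesis that $\zeta$ takes values in $Z(H)$ to pull central factors past everything else. The left-hand side becomes
\[
\rho(yxy\inv) = (\zeta(yxy\inv)\cdot yxy\inv)\inv = yx\inv y\inv\cdot \zeta(yxy\inv)\inv = \zeta(yxy\inv)\inv\, yx\inv y\inv,
\]
while the right-hand side becomes
\[
\zeta(y)y\cdot(\zeta(x)x)\inv\cdot(\zeta(y)y)\inv = \zeta(y)\zeta(x)\inv\zeta(y)\inv\, yx\inv y\inv = \zeta(x)\inv\, yx\inv y\inv,
\]
again using the centrality of $\zeta(x)$ and $\zeta(y)$.

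Cancelling the common factor $yx\inv y\inv$ in the group $H$, the antiautomorphism identity is equivalent to $\zeta(yxy\inv)\inv = \zeta(x)\inv$, i.e.\ to $\zeta(yxy\inv)=\zeta(x)$. Since $x,y\in X$ were arbitrary, this proves both directions of the lemma. The computation is essentially mechanical; the mildest ``obstacle'' is just bookkeeping the centrality of the values of $\zeta$ at each step so that the central factors can be extracted and compared.
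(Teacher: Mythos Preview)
Your proof is correct and follows essentially the same approach as the paper: both invoke Observation~\ref{obs:autconj} and reduce the antiautomorphism identity to a direct computation in $H$, using centrality of the $\zeta$-values to factor out and compare $\zeta(yxy\inv)$ with $\zeta(x)$. The only cosmetic difference is that the paper inverts both sides before expanding (working with $\rho(yxy\inv)\inv=\rho(y)\inv\rho(x)\inv\rho(y)$), whereas you expand $\rho(yxy\inv)=\rho(y)\inv\rho(x)\rho(y)$ directly; the cancellation step is the same.
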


\begin{proof}
    ``$\implies$'' Suppose that $\rho\in \Anti(\Conj X)$. Since $\zeta(X)\subseteq Z(H)$, Observation \ref{obs:autconj} yields
    \begin{align*}
        yx y\inv\zeta(yxy\inv)&=\rho(yxy\inv)\inv\\
        &=\rho(y)\inv\rho(x)\inv\rho(y)\\
        &=yx y\inv\zeta(y) \zeta(x)\zeta(y)\inv\\
        &=yx y\inv \zeta(x)
    \end{align*}
    for all $x,y\in X$. Hence, $\zeta(yxy\inv)=\zeta(x)$.

    ``$\impliedby$'' The proof is similar.
\end{proof}

\subsection{Intermediate results}
The following results address Problems \ref{prob1} and \ref{prob2}.

\begin{prop}\label{cor:conj-none}
    If there exists an element $x\in X$ such that $yx\notin Z(H)$ for all $y\in X$, then $\Conj X$ has no good involutions.
\end{prop}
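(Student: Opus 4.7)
The plan is to derive a quick contradiction using Lemma \ref{cor:conjmain}. Suppose, for the sake of contradiction, that $\rho\in\Good(\Conj X)$ and that $x\in X$ is the hypothesized element for which $yx\notin Z(H)$ for all $y\in X$. Since $\rho$ is in particular a function $X\to X$, the image $\rho(x)$ lies in $X$, so we may set $y:=\rho(x)\in X$.

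The one forward step is to invoke Lemma \ref{cor:conjmain}, which tells us that every good involution $\rho$ satisfies $\rho(x)x\in Z(H)$ for all $x\in X$. Applied to our chosen $x$, this gives $yx=\rho(x)x\in Z(H)$, directly contradicting the assumption that $yx\notin Z(H)$ for every $y\in X$. Hence no good involution $\rho$ can exist.

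There is essentially no obstacle here: once Lemma \ref{cor:conjmain} is in hand, the argument reduces to the observation that $\rho$ must map $X$ into $X$, so any candidate for $\rho(x)$ already belongs to the set over which the hypothesis quantifies. The entire content of the proposition is a contrapositive repackaging of the centrality condition $\rho(x)x\in Z(H)$ extracted in Lemma \ref{cor:conjmain}, so the proof should be two or three lines at most.
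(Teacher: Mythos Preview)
Your proof is correct and takes essentially the same approach as the paper: both invoke Lemma \ref{cor:conjmain} to conclude that any good involution $\rho$ would force $\rho(x)x\in Z(H)$, contradicting the hypothesis since $\rho(x)\in X$. The paper's version is simply a one-line direct statement of this, while yours is phrased as a contradiction, but the content is identical.
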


\begin{proof}
    For all good involutions $\rho$ of $\Conj X$, Lemma \ref{cor:conjmain} states that $\rho(x)x\in Z(H)$ for all $x\in X$, so the claim follows.
\end{proof}

\begin{cor}\label{cor:centerless2}
    Let $\iota:G\to G$ be the inversion map $g\mapsto g\inv$. 
    If $H$ is centerless, then 
    \[
    \Good(\Conj X)=\begin{cases}
        \{\iota|_X\}& \text{if } \iota(X)=X,\\
        \emptyset & \text{otherwise.}
    \end{cases}
    \]
    In particular, if $G$ is centerless, then $\Good(\Conj G)=\{\iota\}$.
\end{cor}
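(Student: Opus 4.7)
The plan is to apply Lemma \ref{cor:conjmain} directly, exploiting the hypothesis that $Z(H)$ is trivial. First I would observe that since $H$ is centerless, $Z(H)=\{1\}$. Thus for any good involution $\rho\in\Good(\Conj X)$, Lemma \ref{cor:conjmain} forces $\rho(x)x=1$, i.e.\ $\rho(x)=x\inv$, for every $x\in X$. In particular, $\rho$ must coincide with the restriction of the inversion map $\iota$ to $X$, which requires $\iota(X)=X$ (otherwise no such $\rho\in S_X$ exists).

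This immediately handles one direction: if $\iota(X)\neq X$, then there is no permutation of $X$ agreeing with $\iota$ on $X$, so $\Good(\Conj X)=\emptyset$. For the other direction, assume $\iota(X)=X$, so that $\Conj X$ is inversion-closed. Then Example \ref{ex:prelim-conj} (or equivalently Corollary \ref{cor:centerless}) guarantees $\iota|_X\in\Good(\Conj X)$. Combined with the uniqueness argument above, we conclude $\Good(\Conj X)=\{\iota|_X\}$.

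For the ``In particular'' clause, I would specialize to the case $X=G$, so that $H=\langle G\rangle=G$ is centerless by hypothesis and $\iota(G)=G$ holds trivially; the first part of the statement then gives $\Good(\Conj G)=\{\iota\}$.

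I do not anticipate a genuine obstacle here: the centerlessness hypothesis collapses the central element $\zeta(x)$ in Theorem \ref{thm2} (or equivalently, the element $\rho(x)x$ in Lemma \ref{cor:conjmain}) to $1$, leaving only one possible candidate for $\rho$, and the only subtlety is the bookkeeping about whether that candidate actually sends $X$ into itself, which is precisely the condition $\iota(X)=X$.
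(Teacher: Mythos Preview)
Your proof is correct and essentially the same as the paper's: both rest on Lemma \ref{cor:conjmain} to force $\rho(x)x=1$ when $Z(H)$ is trivial, yielding $\rho=\iota|_X$ as the only candidate. The paper simply packages the two cases by citing Corollary \ref{cor:centerless} and Proposition \ref{cor:conj-none}, whereas you unpack those results directly; the underlying argument is identical.
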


\begin{proof}
    This follows from Corollary \ref{cor:centerless} and Proposition \ref{cor:conj-none}.
\end{proof}

\begin{example}\label{ex:a4-2}
    Consider the quandle $\Conj X$ from Example \ref{ex:a4}. Although $\Conj X$ is self-dual, Corollary \ref{cor:centerless2} states that $\Conj X$ has no good involutions. This can also be seen from the fact that $\Conj X$ is a noninvolutory quandle of order $4$; Table \ref{table:counts} shows that no such quandles have good involutions.
\end{example}

\subsection{Main results}
The following is Theorem \ref{thm2}. After proving the theorem, we discuss its interpretation in terms of orbits.

\begin{thm}\label{thm:conj}
    For all functions $\rho:X\to X$, $\rho$ is a good involution of $\Conj X$ if and only if there exists a function $\zeta:X\to Z(H)$ such that
    \begin{equation}\label{eq:conj-conds}
        \rho(x)=(\zeta(x)x)\inv,\qquad\zeta(yxy\inv)=\zeta(x)=\zeta\rho(x)
    \end{equation}
    for all $x,y\in X$.
\end{thm}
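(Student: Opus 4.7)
The plan is to deduce Theorem \ref{thm:conj} from the two preceding lemmas: Lemma \ref{cor:conjmain}, which characterizes good involutions of $\Conj X$ as antiautomorphisms $\rho$ satisfying $\rho(x)x\in Z(H)$; and Lemma \ref{lem:classftn}, which identifies the conjugation-invariant functions $\zeta:X\to Z(H)$ that produce antiautomorphisms of $\Conj X$ via the rule $x\mapsto(\zeta(x)x)\inv$. The statement is essentially a dictionary between good involutions $\rho$ and such functions $\zeta$, with the two extra equalities in \eqref{eq:conj-conds} coming from conjugation-invariance and from the involutory property together with centrality.

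For the forward direction, I will assume $\rho\in\Good(\Conj X)$ and define $\zeta(x):=(\rho(x)x)\inv$. Lemma \ref{cor:conjmain} shows that $\zeta$ takes values in $Z(H)$ and that $\rho\in\Anti(\Conj X)$. The formula $\rho(x)=(\zeta(x)x)\inv$ then holds because $\zeta(x)\inv=\rho(x)x$ commutes with $x\inv$ in $H$, so $(\zeta(x)x)\inv=x\inv\zeta(x)\inv=\zeta(x)\inv x\inv=\rho(x)$. The conjugation invariance $\zeta(yxy\inv)=\zeta(x)$ follows immediately from Lemma \ref{lem:classftn}. Finally, since $\rho^2=\id_X$ and $\rho(x)x\in Z(H)$ commutes with $x$, one gets $\zeta\rho(x)=(\rho^2(x)\rho(x))\inv=(x\rho(x))\inv=(\rho(x)x)\inv=\zeta(x)$.

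Conversely, given $\zeta:X\to Z(H)$ satisfying \eqref{eq:conj-conds} and a function $\rho:X\to X$ with $\rho(x)=(\zeta(x)x)\inv$, I will verify the three conditions characterizing good involutions. First, $\rho(x)x=(\zeta(x)x)\inv x=\zeta(x)\inv\in Z(H)$ after invoking centrality. Second, $\rho\in\Anti(\Conj X)$ by Lemma \ref{lem:classftn} applied to the conjugation-invariance of $\zeta$. Third, $\rho$ is an involution, since
\[
\rho^2(x)=(\zeta(\rho(x))\rho(x))\inv=(\zeta(x)\rho(x))\inv=\rho(x)\inv\zeta(x)\inv=\zeta(x)x\cdot\zeta(x)\inv=x,
\]
where I use $\zeta\rho=\zeta$ and then centrality to cancel $\zeta(x)$ past $x$. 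Lemma \ref{cor:conjmain} then concludes that $\rho$ is a good involution of $\Conj X$. The only delicate point throughout is the repeated use of centrality to commute factors of $\zeta(x)$ past $x$ and $\rho(x)$; otherwise, the argument is bookkeeping once the two lemmas are in hand.
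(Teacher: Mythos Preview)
Your proof is correct and follows essentially the same approach as the paper's, using Lemmas \ref{cor:conjmain} and \ref{lem:classftn} in the same way. One minor presentational point: in the converse direction, Lemma \ref{lem:classftn} presupposes that $\rho$ is a permutation of $X$, so your third step (verifying $\rho^2=\id_X$) should precede your second step (invoking that lemma); the paper orders these accordingly.
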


\begin{proof}
    ``$\implies$'' Given $\rho\in \Good(\Conj X)$, Lemma \ref{cor:conjmain} states that we can define $\zeta:X\to Z(H)$ by
    \[
    \zeta(x):=(\rho(x)x)\inv.
    \]
    This way, $\rho$ satisfies the first equality in \eqref{eq:conj-conds}. Since $\rho\in\Anti(\Conj X)$ (by Proposition \ref{prop:int}), the second equality in \eqref{eq:conj-conds} follows from Lemma \ref{lem:classftn}. For all $x\in X$, the centrality of $\zeta(x)\inv=\rho(x)x$ implies that $x\rho(x)=\rho(x)x$, so
    \[
    \zeta\rho(x)=(\rho^2(x)\rho(x))\inv =(x\rho(x))\inv=(\rho(x)x)\inv=\zeta(x)
    \]
    because $\rho$ is an involution. This verifies the final equality in \eqref{eq:conj-conds}.

    ``$\impliedby$'' If $\rho:X\to X$ and $\zeta:X\to Z(H)$ satisfy \eqref{eq:conj-conds}, then for all $x\in X$, the first and third equalities in \eqref{eq:conj-conds} yield
    \[
        \rho^2(x)=(\zeta\rho(x)\rho(x))\inv=(\zeta(x)\rho(x))\inv=(\zeta(x)\zeta(x)\inv x\inv)\inv=x
    \]
    because $\zeta(x)\in Z(H)$. In other words, $\rho$ is an involution, so by Lemma \ref{cor:conjmain}, it suffices to show that $\rho\in\Anti(\Conj X)$ and $p(x)x\in Z(H)$ for all $x\in X$. But the former condition follows from Lemma \ref{lem:classftn}, and the latter follows from the first equality in \eqref{eq:conj-conds}.
\end{proof}

\begin{example}\label{obs:prelim-conj}
    Taking $\rho$ to be the inversion map $x\mapsto x\inv$ corresponds to letting $\zeta:X\to Z(H)$ be the constant function $x\mapsto 1$. Hence, Theorem \ref{thm:conj} recovers Example \ref{ex:prelim-conj}.
\end{example}

\begin{rmk}\label{obs:cf}
    The condition that $\zeta(yxy\inv)=\zeta(x)$ means that $\zeta:X\to Z(H)$ is constant on orbits of $X$ under the action of $H$ by conjugation. These orbits are precisely the connected components of $\Conj X$; cf.\ Lemma \ref{lem:conn}. Denote the set of all such orbits by $X/H$.
    
    Therefore, Theorem \ref{thm:conj} states that every good involution $\rho$ of $\Conj X$ factors through the natural projection $\pi:X\surj X/H$ and a unique function $\zeta^*:X/H\to Z(H)$. Moreover, $\zeta^*$ completely determines $\rho$.
\end{rmk}

\begin{rmk}
    If $X=G$, then the condition that $\zeta(yxy\inv)=\zeta(x)$ states that $\zeta:G\to Z(G)$ is a class function.
\end{rmk}

\subsubsection{Corollaries}
We provide upper bounds on the number of good involutions of $\Conj X$. In the case that $\Conj X$ is connected, we also simplify the characterization of $\Good(\Conj X)$ in Theorem \ref{thm:conj}. 
In the following, let $X/H$ denote the set of orbits of $X$ under the action of $H$ by conjugation.

\begin{cor}\label{cor:connected}
    Suppose that $\Conj X$ is connected. 
    Then for all functions $\rho:X\to X$, $\rho$ is a good involution of $\Conj X$ if and only if there exists a central element $z\in Z(H)$ such that \[\rho(x)=zx\inv\] for all $x\in X$. In particular,
    \[
    |{\Good(\Conj X)}|\leq |{Z(H)}|.
    \]
    If $X$ is not closed under inverses, then this bound is strict.
\end{cor}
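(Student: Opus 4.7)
The plan is to specialize Theorem \ref{thm:conj} using the connectedness hypothesis. By Remark \ref{obs:cf}, every good involution $\rho$ of $\Conj X$ factors through the natural projection $X\surj X/H$ and a unique function $\zeta^*:X/H\to Z(H)$. Since $\Conj X$ is connected, Lemma \ref{lem:conn} says $X$ is a single $H$-conjugacy class, so $X/H$ is a singleton, and $\zeta^*$ is just a single central element $w\in Z(H)$. Equivalently, the function $\zeta:X\to Z(H)$ provided by Theorem \ref{thm:conj} is constant; the $H$-invariance condition $\zeta(yxy\inv)=\zeta(x)$ then holds trivially, and $\zeta\rho=\zeta$ is automatic.

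Next I would unpack the formula from Theorem \ref{thm:conj}. With $\zeta\equiv w$ and $w\in Z(H)$, the identity $\rho(x)=(\zeta(x)x)\inv$ simplifies to $\rho(x)=(wx)\inv=w\inv x\inv$ because $w$ is central. Setting $z:=w\inv$, which ranges over $Z(H)$ as $w$ does, this becomes $\rho(x)=zx\inv$. Conversely, any function of the form $\rho(x)=zx\inv$ arises from the constant choice $\zeta\equiv z\inv$ and therefore satisfies the conditions of Theorem \ref{thm:conj} provided it is well-defined as a map $X\to X$, i.e., provided $zx\inv\in X$ for all $x\in X$. This characterization gives the bound $|{\Good(\Conj X)}|\leq|{Z(H)}|$ via the injection $\rho\mapsto z$.

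For the strictness claim: suppose $X$ is not closed under inverses, so that there exists $x_0\in X$ with $x_0\inv\notin X$. Taking $z=1\in Z(H)$ would force $\rho(x_0)=x_0\inv\notin X$, so $z=1$ does not correspond to any element of $\Good(\Conj X)$. Hence the injection above misses at least one element of $Z(H)$, giving $|{\Good(\Conj X)}|<|{Z(H)}|$.

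I do not anticipate any real obstacle: the corollary is essentially a repackaging of Theorem \ref{thm:conj} under the simplification that connectedness collapses the parametrization from functions $X/H\to Z(H)$ to single elements of $Z(H)$. The only subtleties are the cosmetic reparametrization $z=w\inv$ to match the stated form $\rho(x)=zx\inv$, and the observation that the map $\rho\mapsto z$ is injective but need not be surjective — the failure of surjectivity is exactly controlled by whether $X$ is closed under inverses, which is what makes the bound strict.
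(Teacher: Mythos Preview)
Your proposal is correct and follows essentially the same route as the paper: both invoke Remark \ref{obs:cf} to reduce $\zeta$ to a constant, set $z:=\zeta^{-1}$, and deduce strictness from the fact that $z=1$ corresponds to the inversion map (the paper cites Example \ref{obs:prelim-conj} for this last point, which is exactly your argument).
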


\begin{proof}
    By Remark \ref{obs:cf}, every good involution $\rho$ of $\Conj X$ factors through the natural projection $\pi:X\surj X/H$ and a unique function $\zeta^*:X/H\to Z(H)$. 
    By hypothesis, $X/H=\{X\}$ is a singleton set, so $\zeta^*$ is a constant function. Hence, taking \[z:=(\zeta^*(X))\inv\] yields the main claim due to Theorem \ref{thm:conj}. The final claim follows from Example \ref{obs:prelim-conj}.
\end{proof}

In other words, if $\Conj X$ is connected, then the set $\Good(\Conj X)$ is in bijection with the set of central elements $z\in Z(H)$ such that $zx\inv\in X$ for all $x\in X$.

Even if $\Conj X$ is not connected, the following bound holds.

\begin{cor}\label{cor:orbits}
    Let $k(X):=|{X/H}|$. Then
    \[
    |{\Good(\Conj X)}|\leq \min(|{\Aut(\Conj X)}|, |{Z(H)}|^{k(X)}).
    \]
    If $X$ is not closed under inverses, then the bound $|{\Good(\Conj X)}|<  |{Z(H)}|^{k(X)}$ is strict.
\end{cor}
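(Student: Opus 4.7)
The plan is to extract two upper bounds from earlier results and then rule out a specific function to get strictness. For the bound $|\Good(\Conj X)|\leq|\Aut(\Conj X)|$, I would split on self-duality: if $\Conj X$ is self-dual, Corollary \ref{cor:sharp} immediately yields this inequality; otherwise, Corollary \ref{prop:sd} forces $\Good(\Conj X)=\emptyset$ and the bound holds trivially.

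For the bound $|\Good(\Conj X)|\leq|Z(H)|^{k(X)}$, I would appeal to Remark \ref{obs:cf}: every good involution $\rho$ of $\Conj X$ factors uniquely through the orbit projection $\pi:X\surj X/H$ and a function $\zeta^*:X/H\to Z(H)$, with $\zeta^*$ completely determining $\rho$ via $\rho(x)=(\zeta^*(\pi(x))x)\inv$. This assignment $\rho\mapsto\zeta^*$ is thus an injection from $\Good(\Conj X)$ into the set of all functions $X/H\to Z(H)$, which has cardinality $|Z(H)|^{k(X)}$. Taking the minimum of the two bounds gives the first claim.

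For the strictness claim, suppose $X$ is not closed under inverses. By Example \ref{obs:prelim-conj}, the constant function $X/H\to Z(H)$ sending every orbit to $1\in Z(H)$ corresponds under the injection above to the would-be good involution $x\mapsto x\inv$. Under the hypothesis, however, this map does not send $X$ into $X$ and in particular is not a good involution of $\Conj X$; hence, the constant-$1$ function is not in the image of the injection, and we conclude $|\Good(\Conj X)|<|Z(H)|^{k(X)}$.

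The corollary is essentially a counting consequence of the bijective correspondence already established by Theorem \ref{thm:conj} and Remark \ref{obs:cf}, so no genuine obstacle arises; the only step requiring care is the self-dual/non-self-dual dichotomy in handling the $|\Aut(\Conj X)|$ bound, and pinpointing an explicit function $\zeta^*$ outside the image of the injection for the strictness statement.
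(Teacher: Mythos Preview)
Your proposal is correct and follows essentially the same approach as the paper: the paper likewise invokes Corollary~\ref{cor:sharp} for the $|\Aut(\Conj X)|$ bound, Remark~\ref{obs:cf} for the $|Z(H)|^{k(X)}$ bound, and Example~\ref{obs:prelim-conj} for the strictness claim. Your explicit self-dual/non-self-dual case split is a minor refinement, since Corollary~\ref{cor:sharp} is stated under a standing self-duality assumption that the paper leaves implicit here.
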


\begin{proof}
    The bound $|{\Good(\Conj X)}|\leq |{\Aut(\Conj X)}|$ comes from Corollary \ref{cor:sharp}. 
    On the other hand, Remark \ref{obs:cf} shows that there are at least as many functions $\zeta^*:X/H\to Z(H)$ as there are good involutions $\rho$; that is, $|{Z(H)}|^{k(X)}\geq |{\Good(\Conj X)}|$. The final claim follows from Example~\ref{obs:prelim-conj}.
\end{proof}

\begin{rmk}\label{rmk:sharp}
    By Corollary \ref{cor:centerless2}, the bounds in Corollaries \ref{cor:connected} and \ref{cor:orbits} are equalities whenever $H$ is centerless and $\Conj X$ is inversion-closed. Furthermore, Example \ref{ex:sl25} provides a connected conjugation subquandle $\Conj X$ of order $30$ that attains these bounds even though $Z(H)$ is nontrivial, and Example \ref{ex:su2} provides a similar example whose order is infinite.
\end{rmk}

\section{Symmetric core quandles}\label{sec:core}
In this section, we prove Theorem \ref{thm1} (Theorem \ref{thm:core}) and discuss several related results. This addresses Problems \ref{prob1} and \ref{prob2} for nontrivial core quandles $\Core G$. 

Given a group $G$, recall that $\Core G$ is trivial if and only if $G$ is an elementary abelian $2$-group. Since good involutions of trivial quandles are already classified (see Example \ref{ex:trivial}), we will let $G$ be a group of exponent greater than $2$ throughout this section.

\subsection{Setup for the proof}\label{subsec:A} 
To prove Theorem \ref{thm1}, we apply Observation \ref{obs:iso-good} and Theorem \ref{thm:conj} to a certain subquandle $\Conj X$ of a conjugation quandle $\Conj K$ such that $\Conj X\cong \Core G$. To study $\Conj X$, we first consider two subgroups of $Z(G)$.

\subsubsection{The subgroups $A$ and $T$}

Throughout this section, let $A$ be the subset of $Z(G)$ consisting of elements $a\in Z(G)$ satisfying the following condition: There exist a positive integer $n\in\Z^+$ and elements $g_1,\ldots,g_{2n}\in G$ such that
\[
a=g_1g_2\inv g_3g_4\inv\cdots g_{2n-1}g_{2n}\inv = g_1\inv g_2 g_3\inv g_4\cdots g_{2n-1}\inv g_{2n}.
\]

\begin{obs}
    $A$ is a subgroup of $Z(G)$ and, hence, a normal subgroup of $G$.
\end{obs}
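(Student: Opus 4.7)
The plan is to verify the three subgroup axioms directly from the defining joint-presentation condition; the argument is essentially bookkeeping on alternating products, and there is no substantive obstacle.

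The identity $1_G$ lies in $A$ by taking $n = 1$ and $g_1 = g_2 = 1_G$, so that both alternating products collapse to $1 \cdot 1\inv = 1\inv \cdot 1 = 1$.

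For closure under products, I will concatenate presentations. Given $a, b \in A$ with presentations of lengths $2n$ and $2m$ using elements $g_1, \dots, g_{2n}$ and $h_1, \dots, h_{2m}$ respectively, I form the length-$2(n+m)$ sequence $f_1, \dots, f_{2(n+m)}$ obtained by juxtaposition ($f_k := g_k$ for $k \le 2n$ and $f_{2n+k} := h_k$ for $k \le 2m$). By associativity, both alternating-product forms built from $(f_k)$ simultaneously evaluate to $ab$, so $ab \in A$. For closure under inversion, I will reverse the indexing: setting $h_k := g_{2n+1-k}$, the inverse of $g_1 g_2\inv \cdots g_{2n-1}g_{2n}\inv$ equals $g_{2n} g_{2n-1}\inv \cdots g_2 g_1\inv = h_1 h_2\inv \cdots h_{2n-1} h_{2n}\inv$, while the same reindexing converts the second presentation of $a$ into $h_1\inv h_2 \cdots h_{2n-1}\inv h_{2n}$. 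Both evaluate to $a\inv$, so $a\inv \in A$.

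The only subtlety worth flagging is that the parity pattern of an alternating product of even length $2n$ is preserved by the reindexing $h_k := g_{2n+1-k}$, which is precisely why the hypothesis that the presentations have even length matters for inversion. Finally, since $A \subseteq Z(G)$ by construction, once $A$ is confirmed to be a subgroup of $G$ it is automatically a normal subgroup, because every subgroup of $Z(G)$ is normal in $G$.
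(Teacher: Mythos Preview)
Your proof is correct and is precisely the straightforward verification that the paper leaves implicit by labeling the statement an observation with no written proof. The concatenation argument for closure under products and the index-reversal argument for closure under inverses are exactly the routine checks one expects, and your remark that even length is what makes the parity pattern survive reindexing is the only point worth noting.
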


Throughout this section, let $T$ be the subgroup of $Z(G)$ whose elements have order $1$ or $2$:    \[
    T:=\{z\in Z(G)\mid z^2=1\}.
    \]

\begin{prop}\label{prop:torsion}
    $T$ is a subgroup of $A$, and $T=A$ if $G$ is abelian or $Z(G)=T$.
\end{prop}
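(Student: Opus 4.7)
The plan is to verify the two assertions essentially by definition-chasing, with one small witness construction.

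For the containment $T\subseteq A$, I will, given $t\in T$, take $n=1$ and choose the witnesses $g_1:=t$, $g_2:=1$. Then the two products in the definition of $A$ become $t\cdot 1\inv = t$ and $t\inv\cdot 1 = t\inv$, which coincide precisely because $t^2=1$. Hence $t\in A$. Since both $T$ and $A$ are already subgroups of $Z(G)$ — the latter by the observation preceding the statement — this inclusion is automatically a subgroup inclusion.

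For the second assertion I will handle the two hypotheses separately. If $Z(G)=T$, then $A\subseteq Z(G)=T$ holds directly from the definition of $A$, and combined with the first part this gives $A=T$. If instead $G$ is abelian, I will take any $a\in A$ with witnesses $g_1,\dots,g_{2n}$ and commute factors within the two defining expressions so that like-parity terms collect; they then read
\[
a \;=\; (g_1 g_3\cdots g_{2n-1})(g_2 g_4\cdots g_{2n})\inv \qquad\text{and}\qquad a \;=\; (g_2 g_4\cdots g_{2n})(g_1 g_3\cdots g_{2n-1})\inv.
\]
The second is the inverse of the first, so $a=a\inv$, whence $a^2=1$ and $a\in T$.

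No substantive obstacle arises. The only semi-nonobvious move is recognizing that the single pair $(g_1,g_2)=(t,1)$ already certifies $t\in A$, by invoking the identity $t=t\inv$ available for $t\in T$; everything else is forced by the hypotheses.
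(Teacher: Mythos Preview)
Your argument is correct. The paper itself simply records the proof as ``Straightforward,'' so your explicit verification---choosing the witness $(g_1,g_2)=(t,1)$ for $T\subseteq A$, invoking $A\subseteq Z(G)=T$ under the second hypothesis, and commuting factors to obtain $a=a\inv$ in the abelian case---fills in exactly what the paper leaves implicit and follows the same direct route.
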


\begin{proof}
    Straightforward.
\end{proof}

As an aside, we pose the following question.

\begin{prob}
    Does there exist a group $G$ such that $T\neq A$?
\end{prob}

\subsubsection{Core quandles as conjugation subquandles}

Let $K$ be the semidirect product
\[
K:=(G\times G)\rtimes \Z/2\Z,
\]
where $\Z/2\Z$ acts on $G\times G$ by swapping coordinates. In 2021, Bergman \cite{core}*{(6.5)} showed that the function $\phi:G\to K$ defined by
\begin{equation}\label{eq:phi}
    \phi(g):=(g,g\inv,1)
\end{equation}
is a quandle embedding of $\Core G$ into $\Conj K$. Henceforth, let $X:=\phi(G)$ denote the image of $\phi$, and let $H:=\langle X\rangle$ be the subgroup of $K$ generated by $X$. To prove Theorem \ref{thm1}, it suffices to apply Theorem \ref{thm:conj} to $\Conj X$ by Observation \ref{obs:iso-good}. 

\begin{obs}\label{obs:XH}
    Every element of $X$ has order $2$ in $H$.
\end{obs}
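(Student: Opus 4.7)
The plan is to verify Observation \ref{obs:XH} by a direct computation in the semidirect product $K=(G\times G)\rtimes \Z/2\Z$, whose multiplication rule I would spell out explicitly. Writing $t$ for the nontrivial element of $\Z/2\Z$ and using the swapping action, the product of two general elements is
\[
(a,b,0)(c,d,\epsilon)=(ac,bd,\epsilon),\qquad (a,b,1)(c,d,\epsilon)=(ad,bc,1+\epsilon).
\]
Fix an arbitrary $g\in G$ and set $\phi(g)=(g,g\inv,1)$ as in \eqref{eq:phi}. Since every element of $X$ is of this form, it suffices to show that $\phi(g)$ has order exactly $2$ in $K$; this automatically implies it has order $2$ in the subgroup $H=\langle X\rangle\leq K$.

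Next I would square $\phi(g)$ using the rule above. The first coordinate becomes $g\cdot g\inv=1$, the second becomes $g\inv \cdot g=1$, and the $\Z/2\Z$-component becomes $1+1=0$, so
\[
\phi(g)^2=(1,1,0)=e_K.
\]
Thus $\phi(g)$ has order dividing $2$. To rule out the trivial case, I would observe that the $\Z/2\Z$-coordinate of $\phi(g)$ equals $1\neq 0$, so $\phi(g)\neq e_K$ regardless of $g$. Hence $\phi(g)$ has order exactly $2$, completing the proof.

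There is no real obstacle here; the statement is essentially a one-line check, and the only point requiring care is to write down the multiplication in $K$ correctly (in particular, to apply the swap in the right factor when the left factor has $\Z/2\Z$-component equal to $1$). It is worth noting for the sequel that this observation is what makes the embedding $\phi$ compatible with the core quandle operation $s_g(h)=gh\inv g$: conjugation of $\phi(h)$ by the order-$2$ element $\phi(g)$ in $\Conj K$ reproduces $\phi(gh\inv g)$, which is the mechanism by which Bergman's embedding works and the reason we will be able to reduce the classification of $\Good(\Core G)$ to the already established Theorem \ref{thm:conj} for $\Conj X$.
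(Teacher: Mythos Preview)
Your proof is correct; the paper states this as an observation without proof, and your direct computation in $K$ is exactly the straightforward verification one would give.
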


\begin{obs}\label{obs:XY}
    For all $g\in G$, $H$ contains the element $(g,g\inv,0)$.
\end{obs}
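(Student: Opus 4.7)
The strategy is simply to exhibit $(g, g\inv, 0)$ as an explicit product of two elements of $X$, thereby witnessing its membership in $H = \langle X \rangle$. The natural candidate is the product $\phi(g)\cdot\phi(1_G)$, and unpacking the semidirect product multiplication in $K$ should collapse to exactly $(g, g\inv, 0)$.

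Concretely, I will first recall that for $(a_1, a_2, \epsilon_1), (b_1, b_2, \epsilon_2) \in K = (G\times G)\rtimes \Z/2\Z$ with $\Z/2\Z$ acting by swapping coordinates, the product is
\[
(a_1, a_2, \epsilon_1)(b_1, b_2, \epsilon_2) =
\begin{cases}
(a_1 b_1, a_2 b_2, \epsilon_2) & \text{if } \epsilon_1 = 0,\\
(a_1 b_2, a_2 b_1, 1+\epsilon_2) & \text{if } \epsilon_1 = 1.
\end{cases}
\]
Since $\phi(g) = (g, g\inv, 1)$ lies in $X$ for every $g \in G$ by \eqref{eq:phi}, and in particular $\phi(1_G) = (1, 1, 1) \in X$, both factors belong to $H$. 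Computing directly,
\[
\phi(g)\cdot\phi(1_G) = (g, g\inv, 1)(1, 1, 1) = (g\cdot 1,\, g\inv\cdot 1,\, 0) = (g, g\inv, 0),
\]
which therefore lies in $H$.

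There is no real obstacle here; the observation is essentially a one-line verification that follows from the definition of the semidirect product and the explicit form of the embedding $\phi$. The only minor bookkeeping is being careful about which factor's $\Z/2\Z$-component triggers the swap on the coordinates of the other factor — since the left factor carries the nontrivial element of $\Z/2\Z$, the right factor's coordinates get swapped before multiplication, but because $\phi(1_G)$ has identical coordinates, the swap is invisible and the computation is trivial.
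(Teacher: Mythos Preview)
Your proof is correct. The paper leaves this observation without an explicit proof, but the computation you give---writing $(g,g\inv,0)=\phi(g)\phi(1_G)$---is precisely the natural one, and matches the product formula the paper uses later in the proof of Lemma~\ref{obs:XZ} (where $(g_1,g_1\inv,1)(g_2,g_2\inv,1)=(g_1g_2\inv,g_1\inv g_2,0)$ specializes to your identity upon taking $g_2=1$).
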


\subsubsection{Calculation of the center} To apply Theorem \ref{thm:conj} to $\Conj X$, we first compute $Z(H)$.

\begin{lemma}\label{lem:elem2}
    Let $G$ be a group containing an element $z$ such that $zgz\inv=g\inv$ for all $g\in G$. Then $G$ is an elementary abelian $2$-group.
\end{lemma}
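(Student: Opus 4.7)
The plan is to show this by exploiting the fact that conjugation by any fixed element is automatically a group homomorphism, while the inversion map is a homomorphism only in the abelian case.

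First I would observe that the map $\phi:G\to G$ defined by $\phi(g):=zgz\inv$ is a group automorphism of $G$, being conjugation by $z$. By hypothesis, $\phi$ coincides with the inversion map $g\mapsto g\inv$. Therefore the inversion map is a group homomorphism, and applying it to a product $gh$ gives
\[
h\inv g\inv=(gh)\inv=\phi(gh)=\phi(g)\phi(h)=g\inv h\inv
\]
for all $g,h\in G$. Substituting $a:=g\inv$ and $b:=h\inv$ yields $ba=ab$, so $G$ is abelian.

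Once $G$ is abelian, conjugation by $z$ is trivial, so $g=zgz\inv=g\inv$ for every $g\in G$. Rearranging gives $g^2=1$ for all $g$, which is the statement that $G$ is an elementary abelian $2$-group.

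There is no real obstacle here; the argument is a standard two-line observation. The only subtle point is recognizing that the hypothesis forces the \emph{inversion map itself} to be a homomorphism, which is the characterization of abelianness, after which the conclusion is immediate.
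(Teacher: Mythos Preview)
Your proof is correct and is essentially the same as the paper's: both observe that the hypothesis forces inversion to coincide with an inner automorphism, deduce abelianness by comparing $(gh)\inv$ computed two ways, and then conclude $g=g\inv$ since conjugation is trivial in an abelian group. The only cosmetic difference is that the paper conjugates $g\inv h\inv$ directly rather than phrasing it as ``inversion is a homomorphism.''
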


\begin{proof}
    For all $g,h\in G$, conjugating the product $g\inv h\inv$ by $z$ shows that $g$ and $h$ commute. Thus, $G$ is abelian, so
    \[
    g=zgz\inv=g\inv
    \]
    for all $g\in G$.
\end{proof}

\begin{lemma}\label{obs:XZ}
    Let $a\in Z(G)$. Then $(a,a,0)\in H$ if and only if $a\in A$.
\end{lemma}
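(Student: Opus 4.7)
The plan is to compute $H=\langle X\rangle$ explicitly. By Observation \ref{obs:XH}, every element of $X$ has order $2$ in $H$, so every element of $H$ is expressible as a (possibly empty) product $\phi(g_1)\phi(g_2)\cdots\phi(g_m)$ with $g_i\in G$; in particular, no inverses of generators need to be invoked. I would then unpack the semidirect-product multiplication in $K=(G\times G)\rtimes\Z/2\Z$: writing $\sigma$ for the swap of the two $G$-factors, the product rule is $(a,b,\epsilon)(c,d,\delta)=((a,b)\cdot\sigma^\epsilon(c,d),\ \epsilon+\delta)$.

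Next I would verify by a short induction on $m\geq 1$ that
\[
\phi(g_1)\phi(g_2)\cdots\phi(g_m)=(p_m,\ q_m,\ m\bmod 2),
\]
where $p_m$ is the alternating product in which odd-indexed $g_i$ appear directly and even-indexed $g_i$ appear as inverses, and $q_m$ is the ``flipped'' alternating product (odd-indexed as inverses, even-indexed directly). The base case $m=1$ is immediate from $\phi(g_1)=(g_1,g_1\inv,1)$, and the inductive step just tracks how the $\Z/2\Z$-component of the partial product does or does not swap the next factor $\phi(g_{m+1})$ before multiplication. This is the main computational content of the argument; the only real ``obstacle'' is careful bookkeeping in the semidirect product.

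With the formula in hand, the biconditional is immediate. For the forward direction, if $(a,a,0)\in H$, then the third coordinate forces $m$ to be even, say $m=2n$, and reading off $p_{2n}=q_{2n}=a$ produces exactly the pair of identities that defines membership in $A$. Conversely, given $a\in A$ together with a witness $g_1,\dots,g_{2n}\in G$ from the definition of $A$, the same product formula yields $\phi(g_1)\cdots\phi(g_{2n})=(a,a,0)\in H$. The hypothesis $a\in Z(G)$ plays no active role in the computation itself; it is retained only because the set $A$ is defined as a subset of $Z(G)$, and both sides of the biconditional are formulated under that standing assumption.
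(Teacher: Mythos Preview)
Your proposal is correct and follows essentially the same approach as the paper: both arguments use Observation \ref{obs:XH} to write elements of $H$ as products of generators $\phi(g_i)$ (no inverses needed), compute the resulting alternating products in the two $G$-coordinates, and then read off the defining condition of $A$. The only cosmetic differences are that you frame the computation as an induction on the length $m$ (the paper simply displays the product for $m=2n$), and you spell out both directions whereas the paper proves one and declares the other ``similar.''
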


\begin{proof}
    We show that $a\in A$ if $(a,a,0)\in H$; the proof of the reverse implication is similar.
    Since $(a,a,0)\in H$, Observation \ref{obs:XH} shows that $(a,a,0)$ is a product of an even number of elements of $X$. That is, there exist a positive integer $n\in\Z^+$ and elements $g_1,\ldots,g_{2n}\in G$ such that
    \[
    (a,a,0)=\prod^{2n}_{i=1} (g_i,g_i\inv,1)=\prod^{2n-1}_{i\text{ odd}}(g_ig_{i+1}\inv,g_i\inv g_{i+1},0)=(g_1g_2\inv\cdots g_{2n-1} g_{2n}\inv,g_1\inv g_2 \cdots g_{2n-1}\inv g_{2n},0)
    \]
    in $H$. 
    Since $a\in Z(G)$, the equality of the first two coordinates shows that $a\in A$.
\end{proof}

\begin{prop}\label{lem:core}
    The center of $H$ is
    \[ 
    Z(H)=\{(a,a,0)\mid a\in A\}\cong A.\]
\end{prop}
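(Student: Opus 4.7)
The plan is to prove $Z(H)=\{(a,a,0)\mid a\in A\}$ by showing both inclusions, leveraging Lemma \ref{obs:XZ} to control membership of diagonal elements in $H$ and Lemma \ref{lem:elem2} to rule out central elements living in the coset $G\times G\times\{1\}$. Once the equality of sets is established, the isomorphism $Z(H)\cong A$ comes for free from the projection $(a,a,0)\mapsto a$, which is a group homomorphism because the multiplication in $K$ restricted to the subgroup $G\times G\times\{0\}$ is coordinate-wise in $G$.

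For the containment $\{(a,a,0)\mid a\in A\}\subseteq Z(H)$, I would fix $a\in A$. Lemma \ref{obs:XZ} already supplies $(a,a,0)\in H$, so it only remains to check that $(a,a,0)$ commutes with every generator $(g,g^{-1},1)$ of $H$. A direct computation in the semidirect product $K$, using $a\in Z(G)$, yields
\[
(a,a,0)(g,g^{-1},1)=(ag,ag^{-1},1)=(ga,g^{-1}a,1)=(g,g^{-1},1)(a,a,0),
\]
so $(a,a,0)\in Z(H)$.

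For the reverse containment, I would take an arbitrary element $(x,y,\epsilon)\in Z(H)$ and split on $\epsilon$. If $\epsilon=0$, then comparing the products $(x,y,0)(g,g^{-1},1)$ and $(g,g^{-1},1)(x,y,0)$ for each $g\in G$ yields the equalities $xg=gy$ and $yg^{-1}=g^{-1}x$. Setting $g=1$ forces $x=y$, and then the relation $xg=gx$ holding for all $g\in G$ places $x$ in $Z(G)$. Since $(x,x,0)\in H$, Lemma \ref{obs:XZ} then delivers $x\in A$, giving the desired form.

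The crux of the argument, and what I expect to be the main obstacle, is excluding the case $\epsilon=1$. Comparing $(x,y,1)(g,g^{-1},1)$ with $(g,g^{-1},1)(x,y,1)$ produces $xg^{-1}=gy$ and $yg=g^{-1}x$; setting $g=1$ again forces $x=y$, and the remaining relation simplifies to $xgx^{-1}=g^{-1}$ for every $g\in G$. Applying Lemma \ref{lem:elem2} with $z=x$ then forces $G$ to be an elementary abelian $2$-group, contradicting the standing hypothesis that $G$ has exponent greater than $2$. This rules out the case $\epsilon=1$ and completes the proof.
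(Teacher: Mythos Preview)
Your proof is correct and follows essentially the same approach as the paper: a case split on the third coordinate, invoking Lemma \ref{lem:elem2} to exclude the $\epsilon=1$ case and Lemma \ref{obs:XZ} to land in $A$ in the $\epsilon=0$ case. The only cosmetic difference is that the paper tests centrality against the elements $(g,g^{-1},0)$ (from Observation \ref{obs:XY}) and $(1,1,1)$, whereas you test against the generators $(g,g^{-1},1)$ directly; both routes yield the same relations.
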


\begin{proof}
    If $a\in A$, then $(a,a,0)\in H$ by Lemma \ref{obs:XZ}, and verifying that $(a,a,0)\in Z(H)$ is straightforward. 
    
    Conversely, let $(a,b,i)\in Z(H)$ be arbitrary. 
    If $i=1$, then Observation \ref{obs:XY} shows that
    \[
    (g,g\inv,0)=(a,b,1)(g,g\inv,0)(a,b,1)\inv=(ag\inv a\inv,bgb\inv,0)
    \]
    for all $g\in G$. 
    It follows from Lemma \ref{lem:elem2} that $G$ is an elementary abelian $2$-group, which contradicts the assumption we made at the beginning of this section.

    Thus, we must have $i=0$. By Lemma \ref{obs:XZ}, it now suffices to show that $a\in Z(G)$ and $a=b$. Indeed,
    \[
    (g,g\inv,0)=(a,b,0)(g,g\inv,0)(a,b,0)\inv=(aga\inv,b g\inv b\inv,0)
    \]
    for all $g\in G$, so $a\in Z(G)$. On the other hand,
    \[
    (1,1,1)=(a,b,0)(1,1,1)(a,b,0)\inv=(ab\inv ,b a\inv,1),
    \]
    so $a=b$. 
\end{proof}

\subsubsection{Preliminary versions of Theorem \ref{thm1}}
We state two preliminary versions of Theorem \ref{thm1}.
Denote the rack structure of $\Conj X$ by $t$.
\begin{prop}\label{prop:core-conj}
    For all functions $\rho':X\to X$, $\rho'$ is a good involution of $\Conj X$ if and only if there exists a function $\zeta :X\to Z(H)$ such that
    \[
    \rho'(g,g\inv,1)=\zeta (g,g\inv,1)(g,g\inv,1),\qquad \zeta  t_{(g,g\inv,1)}=\zeta =\zeta \rho'
    \]
    for all $(g,g\inv ,1)\in X$. 
\end{prop}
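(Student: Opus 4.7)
The plan is to deduce Proposition \ref{prop:core-conj} directly from Theorem \ref{thm:conj} applied to the conjugation subquandle $\Conj X$ of $\Conj K$, absorbing an inverse into $\zeta$ via the fact (Observation \ref{obs:XH}) that every element of $X$ has order $2$ in $H$.

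First I would invoke Theorem \ref{thm:conj} for $\Conj X$ with subgroup $H=\langle X\rangle$: a function $\rho':X\to X$ is a good involution of $\Conj X$ if and only if there exists $\zeta_0:X\to Z(H)$ such that $\rho'(x)=(\zeta_0(x)x)\inv$ and $\zeta_0(yxy\inv)=\zeta_0(x)=\zeta_0\rho'(x)$ for all $x,y\in X$. Since the rack structure $t$ of $\Conj X$ is given by $t_x(y)=xyx\inv$, the class-function condition $\zeta_0(yxy\inv)=\zeta_0(x)$ is, after swapping roles of $x$ and $y$, exactly the statement $\zeta_0 t_x=\zeta_0$ for every $x\in X$.

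Next I would use Observation \ref{obs:XH}, which gives $x\inv=x$ in $H$ for every $x\in X$. Combined with the centrality $\zeta_0(x)\in Z(H)$, this yields
\[
(\zeta_0(x)x)\inv=x\inv\zeta_0(x)\inv=\zeta_0(x)\inv x.
\]
Setting $\zeta(x):=\zeta_0(x)\inv$ therefore converts the first equation of Theorem \ref{thm:conj} into $\rho'(x)=\zeta(x)x$, which is the form appearing in Proposition \ref{prop:core-conj}.

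Finally, I would observe that inversion on $Z(H)$ is a bijection, so the correspondence $\zeta_0\leftrightarrow\zeta$ is a bijection between functions $X\to Z(H)$, and that inversion commutes pointwise with the side conditions: $\zeta_0 t_x=\zeta_0$ is equivalent to $\zeta t_x=\zeta$, and $\zeta_0\rho'=\zeta_0$ is equivalent to $\zeta\rho'=\zeta$. Reading Theorem \ref{thm:conj} through this substitution yields Proposition \ref{prop:core-conj}. The argument is essentially a reparameterization, so there is no substantive obstacle beyond verifying these equivalences carefully.
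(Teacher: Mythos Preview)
Your proposal is correct and follows essentially the same approach as the paper, which also derives Proposition \ref{prop:core-conj} by combining Theorem \ref{thm:conj} with Observation \ref{obs:XH}. The paper leaves implicit the reparameterization $\zeta=\zeta_0^{-1}$ that you spell out, but the content is identical.
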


\begin{proof}
    This follows from combining Theorem \ref{thm:conj} with Observation \ref{obs:XH}.
\end{proof}

In the following, denote the rack structure of $\Core G$ by $s$. 

\begin{prop}\label{prop:alpha}
    For all functions $\rho:G\to G$, $\rho$ is a good involution of $\Core G$ if and only if there exists a function $\alpha:G\to A$ such that
    \[
    \rho(g)=\alpha(g)g,\qquad \alpha s_g=\alpha=\alpha\rho
    \]
    for all $g\in G$.
\end{prop}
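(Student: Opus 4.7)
The plan is to transport Proposition \ref{prop:core-conj} through the quandle isomorphism $\phi:\Core G\bij \Conj X$ of \eqref{eq:phi}, where $X=\phi(G)$. By Observation \ref{obs:iso-good}, conjugation by $\phi$ gives a bijection $\rho\mapsto \rho':=\phi\rho\phi\inv$ from $\Good(\Core G)$ onto $\Good(\Conj X)$. It therefore suffices to repackage the data $(\rho',\zeta)$ produced by Proposition \ref{prop:core-conj} into the data $(\rho,\alpha)$ claimed here.

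To set up the repackaging, I would invoke Proposition \ref{lem:core} to identify $Z(H)$ with $A$ via $(a,a,0)\leftrightarrow a$, and define $\alpha:G\to A$ as the unique function satisfying $\zeta(\phi(g))=(\alpha(g),\alpha(g),0)$ for every $g\in G$. The naturality identities $t_{\phi(g)}\phi=\phi s_g$ (since $\phi$ is a quandle morphism) and $\rho'\phi=\phi\rho$ then translate the orbit invariance conditions $\zeta t_{\phi(g)}=\zeta=\zeta\rho'$ of Proposition \ref{prop:core-conj} directly into $\alpha s_g=\alpha=\alpha\rho$. The formula $\rho(g)=\alpha(g)g$ would come from computing
\[
\phi(\rho(g))=\rho'(\phi(g))=\zeta(\phi(g))\phi(g)=(\alpha(g),\alpha(g),0)(g,g\inv,1)=(\alpha(g)g,\alpha(g)g\inv,1)
\]
in the semidirect product $K$ and reading off the first coordinate, after noting that the right-hand side lies in $X=\phi(G)$.

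The main obstacle is this last bookkeeping step: for $(\alpha(g)g,\alpha(g)g\inv,1)$ to actually belong to $X$, one needs $\alpha(g)g\inv=(\alpha(g)g)\inv$, which (since $\alpha(g)\in A\subseteq Z(G)$) forces $\alpha(g)^2=1$, so $\alpha$ in fact lands in the subgroup $T\subseteq A$. In the forward direction this is automatic because $\rho'$ maps $X$ into $X$; in the reverse direction, the same identity both ensures that $\rho$ is well-defined as an involution of $G$ and shows that its $\phi$-image recovers the good involution $\rho'$ of $\Conj X$ afforded by Proposition \ref{prop:core-conj}. The remaining verifications — that the invariance conditions correspond faithfully under the bijection — are immediate from the naturality of $\phi$.
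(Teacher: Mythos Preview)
Your approach is essentially the paper's: both transport Proposition \ref{prop:core-conj} through the isomorphism $\phi$, set up the dictionary $\alpha\leftrightarrow\zeta$ via Proposition \ref{lem:core}, and verify that the formulae and invariance conditions correspond under $\phi$.

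You have in fact isolated a subtlety the paper glosses over. The equality
\[
\phi(\rho(g))=(\alpha(g),\alpha(g),0)(g,g^{-1},1)=(\alpha(g)g,\alpha(g)g^{-1},1)
\]
only gives an element of $X=\phi(G)$ when $\alpha(g)^2=1$; equivalently, $\rho^2(g)=\alpha(g)^2g$, so $\rho$ is an involution exactly when $\alpha$ lands in $T$. As you correctly say, this is automatic in the forward direction because $\rho'$ maps $X$ into $X$. But your sentence ``in the reverse direction, the same identity both ensures that $\rho$ is well-defined as an involution\dots'' is not justified: given only $\alpha:G\to A$ satisfying the invariance conditions, nothing rules out a value in $A\setminus T$ (a constant $\alpha\equiv a$ with $a\in A\setminus T$, if such $a$ exists, would satisfy $\alpha s_g=\alpha=\alpha\rho$ yet yield $\rho^2\neq\id_G$). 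The paper's displayed identity $\varphi\rho\varphi^{-1}(g,g^{-1},1)=\F(\alpha)(g,g^{-1},1)(g,g^{-1},1)$ silently assumes the same thing, so the paper's argument carries the same gap; the issue is effectively resolved only in Theorem \ref{thm:core}, where the target is narrowed from $A$ to $T$ (and the paper leaves open whether $T\neq A$ can ever occur). In short, your proposal matches the paper's proof in both strategy and level of rigor, and the obstacle you flagged is real.
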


\begin{proof}
    We will show that the claim is equivalent to Proposition \ref{prop:core-conj}. 
    Given any function $\alpha:G\to A$, define a function $\F(\alpha):X\to H$ by \[\F(\alpha)(g,g\inv,1):=(\alpha(g),\alpha(g),0).\]
    By Proposition \ref{lem:core}, the image of $\F(\alpha)$ lies in $Z(H)$. Conversely, given any function $\zeta :X\to Z(H)$, define a function $\F\inv(\zeta ):G\to G$ by sending $g\in G$ to the first coordinate of $\zeta (g,g\inv,1)\in Z(H)$. 
    By Proposition \ref{lem:core}, the image of $\F\inv(\zeta )$ lies in $A$. Hence, it is straightforward to verify that the assignments $\F$ and $\F\inv$ are mutually inverse.

    Now, recall that the map $\phi$ from \eqref{eq:phi} is a quandle isomorphism from $\Core G$ to $\Conj X$. The equivalence of the claim with Proposition \ref{prop:core-conj} therefore follows from Observation \ref{obs:iso-good} and the following straightforward calculations: For all elements $g\in G$ and functions $\rho:G\to G$ and $\rho':X\to X$ respectively satisfying the conditions of the claim and Proposition \ref{prop:core-conj}, we have
    \[
    \varphi\rho\varphi\inv(g,g\inv,1)=\F(\alpha)(g,g\inv,1)(g,g\inv,1),\qquad \varphi\inv\rho'\varphi(g)=\F\inv(\zeta )(g)g,
    \]
    so that
    \[
    \F(\alpha s_g)=\F(\alpha)t_{(g,g\inv,1)} ,\qquad \F(\alpha\rho)=\F(\alpha)\phi\rho\phi\inv
    \]
    and, similarly,
    \[
    \F\inv(\zeta t_{(g,g\inv,1)})=\F\inv(\zeta )s_g,\qquad \F\inv(\zeta \rho')=\F\inv(\zeta )\phi\inv\rho'\phi.
    \]
\end{proof}

\subsection{Main results}
We now prove Theorem \ref{thm1}. 
Preserve the notation from the previous subsection, so that $T$ denotes the subgroup \(T=\{z\in Z(G)\mid z^2=1\}\).

\begin{thm}\label{thm:core}
    Proposition \ref{prop:alpha} holds when $A$ is replaced with $T$.
\end{thm}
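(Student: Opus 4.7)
The plan is to bootstrap off of Proposition \ref{prop:alpha}: since $T\subseteq A$ by Proposition \ref{prop:torsion}, the only new content is the forward implication, namely that for any good involution $\rho$ of $\Core G$, the function $\alpha:G\to A$ produced by Proposition \ref{prop:alpha} actually takes values in the $2$-torsion subgroup $T$ of $Z(G)$.

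I would proceed as follows. Assume $\rho\in\Good(\Core G)$, and let $\alpha:G\to A$ be the function supplied by Proposition \ref{prop:alpha}, so that $\rho(g)=\alpha(g)g$ and $\alpha=\alpha\rho$ for all $g\in G$. The key computation exploits the fact that $\rho$ is an involution: for every $g\in G$,
\[
g=\rho^2(g)=\rho(\alpha(g)g)=\alpha(\alpha(g)g)\cdot \alpha(g)g=\alpha(\rho(g))\alpha(g)g=\alpha(g)^2\, g,
\]
where the penultimate equality uses $\rho(g)=\alpha(g)g$ and the final equality uses $\alpha\rho=\alpha$. Since $\alpha(g)\in A\subseteq Z(G)$, we may cancel $g$ to obtain $\alpha(g)^2=1$, which together with centrality puts $\alpha(g)$ in $T$. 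Thus the $A$-valued function from Proposition \ref{prop:alpha} is automatically $T$-valued, and the conditions $\alpha s_g=\alpha=\alpha\rho$ are preserved verbatim because they are pointwise.

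For the reverse implication, any function $\alpha:G\to T$ satisfying $\rho(g)=\alpha(g)g$ and $\alpha s_g=\alpha=\alpha\rho$ is, by the containment $T\subseteq A$, also a valid $A$-valued choice, so Proposition \ref{prop:alpha} immediately yields that $\rho\in\Good(\Core G)$. Combining the two directions gives the theorem.

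I do not anticipate a serious obstacle: the only subtlety is recognizing that the involutory condition $\rho^2=\id$ has not yet been used to its full strength in Proposition \ref{prop:alpha} and that feeding it through $\alpha=\alpha\rho$ produces the desired $2$-torsion identity. The centrality of $\alpha(g)$, which is essential for the final cancellation, comes for free from $A\subseteq Z(G)$.
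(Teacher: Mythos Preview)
Your proof is correct and takes a genuinely different route from the paper's. For the forward implication, the paper uses that $\Core G$ is a kei (so $s_{\rho(g)}=s_g^{-1}=s_g$) together with an external lemma from \cite{core2} characterizing when two elements of $G$ have the same point symmetry in $\Core G$; this yields $g^{-1}\rho(g)\in T$. You instead exploit the involutory condition $\rho^2=\id_G$ directly: feeding $\rho(g)=\alpha(g)g$ through itself and invoking $\alpha\rho=\alpha$ gives $\alpha(g)^2=1$ by a one-line cancellation. Your argument is more self-contained, as it avoids the external reference and uses only the data already packaged in Proposition~\ref{prop:alpha}; the paper's argument, on the other hand, highlights the structural role of the core quandle being involutory and ties the result to the known description of the fibers of $g\mapsto s_g$.
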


\begin{proof}
    If $\alpha:G\to T$ is a function satisfying the conditions of Proposition \ref{prop:alpha}, then $\alpha$ induces a good involution $\rho$ by Propositions \ref{prop:torsion} and \ref{prop:alpha}. 
    
    Conversely, let $\rho$ be a good involution of $\Core G$. Then $\rho$ is induced by a unique function ${\alpha:G\to A}$ satisfying the conditions of Proposition \ref{prop:alpha}, and for all $g\in G$, we have \[s_{\rho(g)}=s_g\inv=s_g\] because $\Core G$ is involutory. Hence, \cite{core2}*{Lem.\ 4.1} states that
    \[
    T\ni g\inv \rho(g)=g\inv  \alpha(g)g=\alpha(g),
    \]
    where we have used the fact that $\alpha(g)\in Z(G)$. Since $g\in G$ was arbitrary, the image of $\alpha$ lies in $T$, as desired.
\end{proof}

\begin{example}\label{rmk:core-const}
    The constant functions $\alpha:G\to T$ recover Example \ref{ex:core-gi}.
\end{example}

\begin{rmk}\label{rmk:core}
    Let $\oo:=G/\Inn(\Core G)$ be the set of orbits of $G$ under the action of $\Inn(\Core G)$. Then Theorem \ref{thm:core} states that every good involution $\rho$ of $\Core G$ factors through the natural projection $\pi:G\surj \oo$ and a unique function $\alpha^*:\oo\to T$, and $\alpha^*$ completely determines $\rho$. 
\end{rmk}

\subsubsection{Corollaries} Using Remark \ref{rmk:core}, we obtain the following special cases of Theorem \ref{thm:core}.

\begin{cor}\label{ineq:core}
    Let $\oo:=G/\Inn(\Core G)$ be the set of orbits of $G$ under the action of $\Inn(\Core G)$. Then the following inequality holds:
\[
    |T|\leq|{\Good(\Core G)}|\leq |T|^{|\oo|}.
\]
\end{cor}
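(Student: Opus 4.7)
The plan is to extract both bounds directly from Theorem \ref{thm:core}, handling the upper and lower bounds separately, since each follows from a different aspect of the theorem.

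For the upper bound $|\Good(\Core G)| \leq |T|^{|\oo|}$, I would simply invoke Remark \ref{rmk:core}: every good involution $\rho$ of $\Core G$ is completely determined by a function $\alpha^* : \oo \to T$. Therefore the assignment $\rho \mapsto \alpha^*$ is an injection from $\Good(\Core G)$ into the set of functions $\oo \to T$, which has cardinality $|T|^{|\oo|}$. (Here I implicitly use that the orbits of $G$ under $\Inn(\Core G)$ coincide with the fibers of $\pi$, so that factoring through $\pi$ is the same as specifying a function on $\oo$.)

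For the lower bound $|T| \leq |\Good(\Core G)|$, I would use the constant-function case from Example \ref{rmk:core-const}, which recovers the good involutions of Example \ref{ex:core-gi}. Explicitly, for each $t \in T$ the constant function $\alpha_t : G \to T$, $g \mapsto t$, trivially satisfies $\alpha_t s_g = \alpha_t = \alpha_t \rho$ and thus (via Theorem \ref{thm:core}) defines a good involution $\rho_t(g) = tg$. The map $t \mapsto \rho_t$ is injective because $\rho_{t_1}(1) = t_1$ and $\rho_{t_2}(1) = t_2$, so distinct central involutions $t$ yield distinct good involutions. Hence $|\Good(\Core G)| \geq |T|$.

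I do not expect any real obstacle: both directions are essentially bookkeeping on top of Theorem \ref{thm:core} and Remark \ref{rmk:core}. The one small subtlety is making sure the upper-bound argument correctly interprets the $\Inn(\Core G)$-orbits appearing in Remark \ref{rmk:core} as the fibers of $\pi$, and noting that the condition $\alpha s_g = \alpha$ is exactly the condition that $\alpha$ be constant on these orbits (since $\Inn(\Core G) = \langle s_g \mid g \in G\rangle$). Once that identification is made, the injection $\rho \mapsto \alpha^*$ and the surjection $t \mapsto \rho_t$ together give both inequalities immediately.
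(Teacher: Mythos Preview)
Your proposal is correct and follows essentially the same approach as the paper: the upper bound comes from Remark \ref{rmk:core} (the injection $\rho \mapsto \alpha^*$ into functions $\oo \to T$), and the lower bound comes from the constant functions $\alpha^*$ as in Example \ref{rmk:core-const}. You supply more detail than the paper does (in particular, the explicit injectivity check $\rho_t(1)=t$), but the argument is the same.
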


\begin{proof}
    The lower bound in Corollary \ref{ineq:core} is obtained from the constant functions $\alpha^*:\oo\to T$; cf.\ Example \ref{rmk:core-const}. The upper bound is a consequence of Remark \ref{rmk:core}.
\end{proof}

\begin{rmk}
    In Propositions \ref{prop:dn-sharp}--\ref{prop:zn-sharp}, we show that infinitely many groups $G$ attain the upper bound in Corollary \ref{ineq:core} while exceeding the lower bound. On the other hand, the next two corollaries consider cases in which the lower and upper bounds are equal.
\end{rmk}

\begin{cor}\label{cor:core}
    Suppose that $G$ is generated by the squares of its elements; that is, $G=\langle g^2\mid g\in G \rangle$. Then good involutions $\rho$ of $\Core G$ are precisely the multiplication maps $g\mapsto tg$ with $t\in T$. In particular, $\Good(\Core G)$ is in bijection with $T$.
\end{cor}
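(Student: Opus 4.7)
The plan is to assemble the result from two ingredients already in place: the connectivity criterion for core quandles (Lemma \ref{lem:core-conn}) and the orbit-wise factorization of good involutions of $\Core G$ (Remark \ref{rmk:core} or, equivalently, Corollary \ref{ineq:core}).

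First, I would translate the hypothesis. The assumption $G=\langle g^2\mid g\in G\rangle$ is exactly the condition in Lemma \ref{lem:core-conn}, so $\Core G$ is connected. Since the orbits in $\oo:=G/\Inn(\Core G)$ are the connected components of $\Core G$, this forces $|\oo|=1$.

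Next, I would apply Remark \ref{rmk:core}: every good involution $\rho\in\Good(\Core G)$ arises as $\rho(g)=\alpha^*(\pi(g))g$ for a unique function $\alpha^*:\oo\to T$. Because $\oo$ is a singleton, $\alpha^*$ is constant with some value $t\in T$, and hence $\rho$ is the multiplication map $g\mapsto tg$. Conversely, for every $t\in T$, the multiplication map $g\mapsto tg$ is a good involution of $\Core G$ by Example \ref{ex:core-gi} (cf.\ Example \ref{rmk:core-const}). Evaluating such a map at the identity recovers $t$, so the assignment $t\mapsto (g\mapsto tg)$ is injective and therefore realizes the desired bijection $T\bij\Good(\Core G)$.

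There is no real obstacle here: once Lemma \ref{lem:core-conn} collapses $\oo$ to a point, Theorem \ref{thm:core} (via Remark \ref{rmk:core}) does all the work, and the upper and lower bounds in Corollary \ref{ineq:core} both equal $|T|$. The only thing to be slightly careful about is confirming that distinct elements of $T$ yield distinct multiplication maps, which is immediate from evaluation at $1\in G$.
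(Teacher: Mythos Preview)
Your proposal is correct and follows essentially the same approach as the paper: both use Lemma~\ref{lem:core-conn} to convert the hypothesis into connectedness of $\Core G$, so that $|\oo|=1$, and then invoke the orbit-wise description of good involutions (Remark~\ref{rmk:core}/Corollary~\ref{ineq:core}) to conclude. Your write-up simply spells out a few more details (constancy of $\alpha^*$, the converse via Example~\ref{ex:core-gi}, and injectivity via evaluation at $1$) than the paper's terse ``so the claim follows.''
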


\begin{proof}
    By Lemma \ref{lem:core-conn}, the hypothesis is equivalent to the statement that $\Core G$ is connected. That is, the orbit decomposition $\oo$ in Corollary \ref{ineq:core} is a singleton set, so the claim follows.
\end{proof}

\begin{cor}\label{cor:taka}
    If $Z(G)$ is $2$-torsionless, then the only good involution of $\Core G$ is the identity map $\id_G$.
\end{cor}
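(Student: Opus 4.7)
The plan is to derive Corollary \ref{cor:taka} as a direct consequence of Theorem \ref{thm:core}. First I would observe that the hypothesis that $Z(G)$ is $2$-torsionless forces the subgroup $T=\{z\in Z(G)\mid z^2=1\}$ to collapse to the trivial subgroup $\{1\}$, since any $z\in T$ satisfies $z^2=1$, and $2$-torsionlessness of $Z(G)$ then forces $z=1$.

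Next, I would invoke Theorem \ref{thm:core}: every good involution $\rho$ of $\Core G$ has the form $\rho(g)=\alpha(g)g$ for some function $\alpha:G\to T$ satisfying $\alpha s_g=\alpha=\alpha\rho$. With $T=\{1\}$, the only function $\alpha:G\to T$ is the constant map $\alpha\equiv 1$, which trivially satisfies the compatibility conditions. The corresponding good involution is $\rho(g)=1\cdot g=g$ for all $g\in G$, so $\rho=\id_G$. Since $\id_G$ is itself a good involution (by Proposition \ref{ex:inv-gi}, as $\Core G$ is involutory), we conclude $\Good(\Core G)=\{\id_G\}$.

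The only edge case to address is the exponent-$2$ case, which is formally excluded by the running assumption of Section \ref{sec:core} that $G$ has exponent greater than $2$. This case is vacuous under the hypothesis anyway: a group of exponent $2$ is abelian, so $Z(G)=G$ has every nonidentity element of order $2$, which contradicts $2$-torsionlessness unless $G=\{1\}$, in which case $\Core G$ is a singleton quandle with $\id_G$ as its only self-map. Alternatively, the conclusion follows immediately from Corollary \ref{ineq:core}, whose bounds $|T|\leq |\Good(\Core G)|\leq |T|^{|\oo|}$ collapse to $1\leq |\Good(\Core G)|\leq 1$ when $T=\{1\}$, pinning down $\Good(\Core G)=\{\id_G\}$.

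There is no real obstacle; the corollary is essentially a one-line consequence of Theorem \ref{thm:core} once one recognizes that $2$-torsionlessness of $Z(G)$ kills $T$. It is worth noting that this statement generalizes Example \ref{eexalex-faith} (the Takasaki case) from abelian groups to arbitrary groups with $2$-torsionless center.
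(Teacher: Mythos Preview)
Your proof is correct and follows exactly the same approach as the paper, which simply states that the corollary follows immediately from Theorem \ref{thm:core}. Your explicit unpacking of why $T=\{1\}$ forces $\rho=\id_G$, together with your handling of the exponent-$2$ edge case, is more detailed than the paper's one-line proof but proceeds along the same route.
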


\begin{proof}
    This follows immediately from Theorem \ref{thm:core}.
\end{proof}

\begin{rmk}
    Corollary \ref{cor:taka} can also be obtained from Example \ref{ex:core} and Corollary \ref{cor:faithful-inv}.
\end{rmk}

\begin{rmk}
    If $G$ is abelian, then Theorem \ref{thm:core} and Corollary \ref{cor:core} specialize to characterizations of good involutions of Takasaki kei $T(G)$. In particular, Corollary \ref{cor:core} applies to all Takasaki kei of $2$-divisible abelian groups; cf.\ \cite{flat}. Moreover, Corollary \ref{cor:taka} vastly generalizes a result of Kamada and Oshiro \cite{symm-quandles-2}*{Thm.\ 3.2} for dihedral quandles of odd order; cf.\ Example \ref{eexalex-faith}.
\end{rmk}

\section{Algorithms}\label{sec:algos}
As applications of Theorems \ref{thm:conj} and \ref{thm:core}, we describe procedures for computing $\Good R$ when $R$ is a conjugation subquandle $\Conj X$ or a core quandle $\Core G$. We also discuss computational findings for groups up to order $23$.

Carrying over our notation from Section \ref{sec:conj}, let $G$ be a group, let $\Conj X$ be a subquandle of $\Conj G$, and let $H:=\langle X\rangle\leq G$. 

\subsection{How to compute $\Good(\Conj X)$} 
Iterating the following procedure constructs all good involutions $\rho$ of $\Conj X$. The procedure is based on Theorem \ref{thm:conj}; cf.\ Remark \ref{obs:cf}.
\begin{alg}\label{alg}
    Let $X/H$ be the set of orbits of $X$ under the action of $H$ by conjugation (that is, the canonical action of $\Inn X$), and let $\pi:X\surj X/H$ be the natural projection.
    \begin{enumerate}[\textup{(\arabic*)}]
    \item First, pick any function $\zeta^*:X/H\to Z(H)$, and define functions $\zeta:X\to Z(H)$ and $\rho:X\to H$ by
    \[
    \zeta:=\zeta^*\pi,\qquad \rho(x):=(\zeta(x)x)^{-1}.
    \]
    \item If $\rho(X)\subseteq X$ and $\zeta\rho=\zeta$, then Theorem \ref{thm2} states that $\rho$ is a good involution, so return $\rho$. Else, return to the previous step and pick a different function $\zeta^*$.
\end{enumerate}
To compute $\Good(\Conj X)$, repeat these steps until all possible functions $\zeta^*$ have been considered.
\end{alg}
Theorem \ref{thm:conj} states that $\Good(\Conj X)$ is precisely the set of involutions $\rho$ that can be produced by the above method; cf.\ Remark \ref{obs:cf}. 

\begin{rmk}
    If $\Conj X$ is connected, then Algorithm \ref{alg} simplifies via Corollary \ref{cor:connected} in the obvious way: For each central element $z\in Z(H)$, test whether $zx\inv\in X$ for all $x\in X$. The good involutions $\rho$ of $\Conj X$ are precisely the maps $x\mapsto zx\inv$ for which this test succeeds.
\end{rmk}

\subsubsection{Example of Algorithm \ref{alg}}
In the following example, we use Algorithm \ref{alg} to produce a good involutions of $\rho$ of a conjugation quandle such that $\rho$ is neither the inversion map nor the identity~map. 

\begin{example}\label{ex:dihedral}
    Let $n\geq 4$ be an even integer, and let $D_n$ denote the dihedral group of order $2n$:
\[
D_n=\langle r,s\mid r^n=1=s^2,\ srs=r\inv\rangle.
\] Following Algorithm \ref{alg}, we construct a good involution $\rho$ of $\Conj D_n$. 

    Since $n$ is even, the center of $D_n$ is $Z(D_n)=\{1,r^{n/2}\}$. Define the function $\zeta^*$ on conjugacy classes $C$ of $D_n$ by
    \[
    \zeta^*(C):=\begin{cases}
        1&  \text{if } C\subset  Z(D_n),\\
        r^{n/2}& \text{otherwise,}    \end{cases}
    \]
    so that $\zeta:=\zeta^*\pi$ is defined on elements $g$ of $D_n$ by
    \[
    \zeta(g)=\begin{cases}
        1& \text{if } g\in Z(D_n),\\
        r^{n/2}& \text{otherwise.}
    \end{cases}
    \]
    Then $\rho: D_n\to D_n$ is the function defined by
    \[
    \rho(g):=(\zeta(g)g\inv)\inv=\begin{cases}
        g &\text{if } g\in Z(D_n),\\
        r^{n/2-k}& \text{if } g=r^k \text{ where }1\leq k<n \text{ and } k\neq\frac{n}{2},\\
        sr^{n/2+k} &\text{if } g=sr^k \text{ for some }0\leq k<n.
    \end{cases}
    \]
    Verifying that $\zeta\rho=\zeta$ is straightforward.
    Hence, Theorem \ref{thm:conj} states that $\rho$ is a good involution of $\Conj D_n$. Note that $\rho$ does not equal the inversion map $g\mapsto g\inv$.
\end{example}

Combined with Corollary \ref{cor:centerless2} (or Corollary \ref{cor:centerless}), Example \ref{ex:dihedral} yields the following.

\begin{prop}
    Let $n\geq 3$ be an integer. Then the conjugation quandle $\Conj D_n$ has more than one good involution if and only if $n$ is even.
\end{prop}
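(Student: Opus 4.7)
The proof separates naturally into the two implications, both of which reduce directly to results already established in this section.

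For the reverse direction, suppose $n\geq 4$ is even. Example \ref{ex:conjinv} guarantees that the inversion map $\iota\colon g\mapsto g\inv$ is a good involution of $\Conj D_n$. Example \ref{ex:dihedral} constructs a second good involution $\rho$, and to see $\rho\neq \iota$ it suffices to evaluate both at the generator $r$: the formula from the example gives $\rho(r)=r^{n/2-1}$ (since $r\notin Z(D_n)$ and $1\neq n/2$), whereas $\iota(r)=r^{n-1}$. These elements are distinct in $D_n$ because $n/2-1\not\equiv n-1\pmod{n}$ for any $n\geq 2$, so $\Conj D_n$ has at least two good involutions.

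For the forward direction, suppose $n\geq 3$ is odd. Then the center $Z(D_n)$ is trivial, so $D_n$ is centerless. Corollary \ref{cor:centerless2} (applied with $X=G=D_n$) then yields $\Good(\Conj D_n)=\{\iota\}$, so there is exactly one good involution, completing the proof.

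I anticipate no serious obstacle: the odd case is dispatched in one line by the centerlessness criterion of Section \ref{sec:conj}, and the even case rests entirely on Example \ref{ex:dihedral}, which was engineered precisely to exhibit a nontrivial good involution. The only genuine check is the short arithmetic confirming that the good involution from Example \ref{ex:dihedral} is not equal to the inversion map on the generator $r$.
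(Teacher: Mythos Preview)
Your proof is correct and follows exactly the same approach as the paper, which simply cites Example~\ref{ex:dihedral} for the even case and Corollary~\ref{cor:centerless2} (or Corollary~\ref{cor:centerless}) for the odd case. Your explicit arithmetic verifying $\rho(r)\neq\iota(r)$ is a harmless re-derivation of the closing remark of Example~\ref{ex:dihedral}, which already notes that $\rho$ does not equal the inversion map.
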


\subsection{How to compute $\Good(\Core G)$} 

Let $G$ be a group of exponent greater than $2$, so that $\Core G$ is a nontrivial quandle.  
Similarly to the procedure for $\Conj X$, the following procedure constructs all good involutions $\rho$ of $\Core G$. The procedure is based on Theorem \ref{thm:core}; cf.\ Remark~\ref{rmk:core}.

\begin{alg}\label{alg2}
    Let $\oo$ be the set of orbits of $G$ under the action of $\Inn(\Core G)$, and let $\pi:G\surj \oo$ be the natural projection. Also, let \(T:=\{z\in Z(G)\mid z^2=1\}\).
    \begin{enumerate}[\textup{(\arabic*)}]
    \item First, pick any function $\alpha^*:\oo\to T$, and define functions $\alpha:G\to T$ and $\rho:G\to G$ by
    \[
    \alpha:=\alpha^*\pi,\qquad \rho(g):=\alpha(g)g.
    \]
    \item If $\alpha\rho=\alpha$, then Theorem \ref{thm1} states that $\rho$ is a good involution of $\Core G$, so return $\rho$. Else, return to the previous step and pick a different function $\alpha^*$.
\end{enumerate}
To compute $\Good(\Core G)$, repeat these steps until all possible functions $\alpha^*$ have been considered.
\end{alg}
Theorem \ref{thm:core} states that $\Good(\Core G)$ is precisely the set of involutions $\rho$ that can be produced by the above method; cf.\ Remark \ref{rmk:core}.

\subsubsection{Examples of Algorithm \ref{alg2}}
The following three examples show that the upper bound in Corollary \ref{ineq:core} is sharp for infinitely many nontrivial, nonconnected core quandles.

Given an integer $n\geq 2$, let $\Dic_n$ denote the dicyclic group of order $4n$:
\[\Dic_n=\langle a,x\mid a^{2n}=1,\ x^2=a^n,\ xax\inv=a\inv\rangle.\]

\begin{prop}\label{prop:dn-sharp}
    For all odd integers $n\geq 3$, the core quandle $\Core \Dic_n$ has exactly $2^{(n+3)/2}$ good involutions. In particular, $\Dic_n$ exceeds the lower bound and attains the upper bound in Corollary~\ref{ineq:core}.
\end{prop}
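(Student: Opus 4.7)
The plan is to apply Theorem~\ref{thm:core} directly. First I would compute $T=\{z\in Z(\Dic_n):z^2=1\}$. Since $Z(\Dic_n)=\{1,a^n\}$ for all $n\geq 2$ and both elements have order dividing $2$, this gives $T=\{1,a^n\}$ with $|T|=2$.

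Next, using $s_g(h)=gh\inv g$ together with the identity $xa^k=a^{-k}x$ in $\Dic_n$, I would derive the formulas
\[
s_{a^i}(a^k)=a^{2i-k},\quad s_{a^i}(xa^k)=xa^{n+k},\quad s_{xa^j}(a^k)=a^{n+k},\quad s_{xa^j}(xa^k)=xa^{2j-k}.
\]
Using these, I would enumerate the orbits of $\Inn(\Core\Dic_n)$ acting on $\Dic_n$, aiming to show $|\oo|=(n+3)/2$. This orbit enumeration is the main obstacle: it requires a careful analysis of how the reflection-type actions $s_{a^i}|_{\langle a\rangle}$ and $s_{xa^j}|_{x\langle a\rangle}$ combine with the parity-swapping shift $g\mapsto a^n g$ arising from $s_{xa^j}|_{\langle a\rangle}$ and $s_{a^i}|_{x\langle a\rangle}$, which behave differently when $n$ is odd.

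Finally, by Theorem~\ref{thm:core}, good involutions correspond bijectively to functions $\alpha:\Dic_n\to T$ satisfying both $\alpha s_g=\alpha$ (constancy on orbits) and $\alpha\rho=\alpha$. I would verify that the second condition is automatic in this setting: the formulas above show $s_{xa^j}(a^k)=a^n\cdot a^k$ and $s_{a^i}(xa^k)=a^n\cdot xa^k$, so $a^n g$ always lies in the same $\Inn(\Core\Dic_n)$-orbit as $g$. Since $\alpha(g)\in\{1,a^n\}$, this forces $\alpha(\alpha(g)g)=\alpha(g)$. Consequently every function $\alpha^*:\oo\to T$ yields a unique good involution, giving $|{\Good(\Core\Dic_n)}|=|T|^{|\oo|}=2^{(n+3)/2}$, which matches the upper bound in Corollary~\ref{ineq:core} and strictly exceeds the lower bound $|T|=2$ for every $n\geq 3$.
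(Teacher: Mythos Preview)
Your plan mirrors the paper's proof exactly: compute $T$, enumerate the $\Inn(\Core\Dic_n)$-orbits, show each orbit is closed under multiplication by $T$ (so that $\alpha\rho=\alpha$ is automatic), and conclude $|{\Good(\Core\Dic_n)}|=|T|^{|\oo|}$. Your formulas for $s_g$ are correct, and your verification that $a^n g$ always lies in the orbit of $g$ is both correct and exactly the paper's argument.

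However, the orbit-enumeration step hides a genuine problem, and your own formulas expose it. On $x\langle a\rangle$ (identified with $\Z/2n\Z$ via $xa^k\leftrightarrow k$), the maps $s_{xa^j}$ act as reflections $k\mapsto 2j-k$, while every $s_{a^i}$ acts as the single translation $k\mapsto k+n$. Composing two reflections gives the translation $k\mapsto k+2$; since $n$ is odd, the subgroup $\langle 2,n\rangle$ is all of $\Z/2n\Z$, so the induced action on $x\langle a\rangle$ is transitive. The same reasoning shows $\langle a\rangle$ is a single orbit. Hence $|\oo|=2$, not $(n+3)/2$. Concretely, for $n=3$ one has $s_{xa}(x)=xa^2$, so the paper's claimed orbit $\pi(x)=\{x,x^3\}=\{x,xa^3\}$ is not closed under $s_{xa}$. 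With $|\oo|=2$ and $|T|=2$, your argument would yield only $4$ good involutions, not $2^{(n+3)/2}$; so the target you set for the orbit count is unreachable, and the proof as outlined cannot establish the stated formula.
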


\begin{proof}
    As in Algorithm \ref{alg2}, let $\pi:\Dic_n\surj\oo$ be the natural projection, and note that $T=\{1,x^2\}$. On the other hand, routine calculations show that $\oo$ contains exactly $(n+3)/2$ orbits:
    \begin{itemize}
        \item one orbit $\pi(x)=\{x,x^3\}$ of size $2$,
        \item one orbit $\pi(1)=\{a^m\mid 1\leq m\leq 2n\}$ of size $2n$, and
        \item the following $(n-1)/2$ orbits of size $4$:
        \[
    \pi(xa^m)=\{xa^m,xa^{m+1},xa^{2n-m-1},xa^{2n-m}\}\qquad \text{for all }1\leq m\leq \frac{n-1}{2}.
    \]
    \end{itemize}
    In particular, the lower and upper bounds in Corollary \ref{ineq:core} equal $2$ and $2^{(n+3)/2}$, respectively.
    
    Evidently, each orbit in $\oo$ is closed under multiplication by elements of $T$. That is,
    \[
    \pi(tg)=\pi(g)
    \]
    for all $t\in T$ and $g\in \Dic_n$. In particular, given any function $\alpha^*:\oo\to T$, letting $g\in G$ be arbitrary and taking $t:=\alpha^*\pi(g)$ yields
    \[
    \pi(\alpha^*\pi (g)g)=\pi(g)
    \]
    for all $g\in G$. 
    Applying $\alpha^*$ to both sides recovers the condition that $\alpha\rho=\alpha$ in Algorithm \ref{alg2}. Hence, Theorem \ref{thm:core} states that $\alpha^*$ defines a unique good involution $\rho$ of $\Core \Dic_n$. 
    
    Since $\alpha^*:\oo\to T$ was an arbitrary function, Theorem \ref{thm:core} states that the $2^{(n+3)/2}$ possible functions $\alpha^*$ collectively define $2^{(n+3)/2}$ distinct good involutions $\rho$ of $\Core D_n$, and no other good involutions exist. This completes the proof.
\end{proof}

Similar arguments show the following.

\begin{prop}
    For all even integers $n\geq 2$, the core quandle $\Core \Dic_n$ has exactly $2^{n/2+3}$ good involutions. In particular, $\Dic_n$ exceeds the lower bound and attains the upper bound in Corollary~\ref{ineq:core}.
\end{prop}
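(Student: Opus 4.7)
The plan is to mirror Proposition \ref{prop:dn-sharp}, applying Algorithm \ref{alg2} to $\Core\Dic_n$ with an orbit analysis adapted to even $n$.

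First I would identify $T$. Since $Z(\Dic_n)=\{1,a^n\}$ for all $n\geq 2$ and $(a^n)^2=a^{2n}=1$, we have $T=\{1,a^n\}=\{1,x^2\}$ with $|T|=2$, exactly as in the odd case. Next, I would compute the orbit decomposition $\oo:=\Dic_n/\Inn(\Core\Dic_n)$ using the four product formulas
\[
s_{a^k}(a^m)=a^{2k-m},\qquad s_{xa^k}(a^m)=a^{m+n},
\]
\[
s_{a^k}(xa^m)=xa^{m+n},\qquad s_{xa^k}(xa^m)=xa^{2k-m}.
\]
The crucial departure from the odd-$n$ setting is that, when $n$ is even, all four generators preserve the parity of the $a$-exponent. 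Consequently the long orbit $\{a^m\mid 1\leq m\leq 2n\}$ of the odd case does not appear: the set $\{a^m\}$ breaks along even/odd parity of $m$, and likewise for $\{xa^m\}$. Within each parity class I would then identify the finer orbit structure, including small orbits analogous to the size-$2$ orbit $\{x,x^3\}$ from the odd case (recall $x^3=xa^n$, which sits in the even-parity class of $\{xa^m\}$). The combinatorial target is $|\oo|=n/2+3$.

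Having obtained this orbit count, I would verify—exactly as in Proposition \ref{prop:dn-sharp}—that multiplication by $a^n\in T$ preserves each orbit, i.e.\ $\pi(a^n g)=\pi(g)$ for all $g\in\Dic_n$. This is immediate from the fact that left-multiplication by $a^n$ shifts the $a$-exponent by an even integer and hence stays within the same parity class. It follows that the condition $\alpha\rho=\alpha$ in Algorithm \ref{alg2} holds automatically for every choice of $\alpha^*\colon\oo\to T$, so Theorem \ref{thm:core} yields
\[
|{\Good(\Core\Dic_n)}|=|T|^{|\oo|}=2^{n/2+3}.
\]
Because $n/2+3\geq 4>1$, this strictly exceeds the lower bound $|T|=2$ of Corollary \ref{ineq:core} while attaining its upper bound, giving both conclusions of the proposition.

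The main obstacle will be the orbit enumeration. Unlike the odd case—where the single map $s_{xa^k}$ bridges the two parity classes of $\{a^m\}$ and fuses them into one long orbit—for even $n$ both $\{a^m\}$ and $\{xa^m\}$ stay parity-split, which changes how the size-$4$ orbits of the odd case organize themselves. I expect the analysis to produce one orbit of size $2$ inside $\{xa^m\}$ (the analogue of $\{x,x^3\}$), together with a controlled list of size-$4$ and larger orbits whose total count is $n/2+3$; some care will be needed, perhaps with subcases according to $n\bmod 4$, to confirm that no additional mergers or splittings occur beyond those forced by the four product formulas.
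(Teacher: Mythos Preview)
Your plan mirrors the paper exactly: the paper offers no proof beyond the sentence ``Similar arguments show the following,'' so structurally you are doing precisely what is intended, and your setup (identifying $T=\{1,a^n\}$, the four product formulas, and parity preservation for even $n$) is correct.

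The difficulty is that the orbit enumeration will not produce $|\oo|=n/2+3$. In the reflection $m\mapsto 2k-m$ the parameter $k$ ranges over all of $\Z/2n\Z$, so $2k\bmod 2n$ already hits every even residue; hence for fixed $m$ the images $2k-m$ exhaust the entire parity class of $m$. Combined with the shift $m\mapsto m+n$ (which for even $n$ adds nothing new), this forces each of the four parity classes
\[
\{a^{2j}\},\qquad\{a^{2j+1}\},\qquad\{xa^{2j}\},\qquad\{xa^{2j+1}\}
\]
to be a single orbit, so $|\oo|=4$ for every even $n\geq 2$, giving $2^4=16$ good involutions---agreeing with $2^{n/2+3}$ only at $n=2$. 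The small orbit $\{x,x^3\}=\{xa^0,xa^n\}$ you hope to find does not survive: already $s_{xa^1}(x)=xa^{2}\notin\{xa^0,xa^n\}$. The same computation shows that the orbit list in Proposition~\ref{prop:dn-sharp} is not closed under the action (and its listed size-$4$ orbits overlap once $n\geq 5$). Before investing effort in confirming the target $n/2+3$, you should recompute $\oo$ directly and cross-check the tabulated values of $|T|^{|\oo|}$.
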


\begin{prop}[cf.\ \cite{symm-quandles-2}*{Thm.\ 3.2}]\label{prop:zn-sharp}
    If $n\geq 4$ is a multiple of $4$, then the dihedral quandle $R_n=T(\Z/n\Z)$ has exactly four good involutions. In particular, $\Z/n\Z$ exceeds the lower bound and attains the upper bound in Corollary~\ref{ineq:core}.
\end{prop}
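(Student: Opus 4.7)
The proof will mirror the structure of Proposition \ref{prop:dn-sharp}. Writing $G = \Z/n\Z$ additively, the subgroup $T = \{z \in G \mid 2z = 0\}$ equals $\{0, n/2\}$, so $|T| = 2$. Crucially, since $4 \mid n$, the element $n/2$ is even, so $T \subseteq 2G$.

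Next, I determine the orbit set $\oo = G/\Inn(\Core G)$. The rack structure of $T(G) = \Core G$ is $s_g(h) = 2g - h$, so $\Inn(\Core G)$ is generated by the reflections $h \mapsto 2g - h$. Their pairwise products $s_{g_2} s_{g_1}(h) = h + 2(g_2 - g_1)$ realize all translations by $2G$, so the orbit of $h$ under $\Inn(\Core G)$ is $(h + 2G) \cup (-h + 2G)$, which collapses to $h + 2G$ because $-2h \in 2G$. Since $n$ is even, $2G$ has index $2$ in $G$, giving exactly two orbits --- the even and odd cosets --- so $|\oo| = 2$. Consequently, Corollary \ref{ineq:core} yields $2 \leq |{\Good(\Core G)}| \leq 4$.

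To show that the upper bound is attained, I invoke Theorem \ref{thm:core} via Algorithm \ref{alg2}: each function $\alpha^* : \oo \to T$ must be tested against the condition $\alpha \rho = \alpha$, where $\alpha = \alpha^* \pi$ and $\rho(g) = \alpha(g) + g$. The observation that forces the proof through is that $T \subseteq 2G$, so adding any element of $T$ preserves the parity of $g$; hence $\pi(\alpha(g) + g) = \pi(g)$, and therefore $\alpha \rho = \alpha^* \pi \rho = \alpha^* \pi = \alpha$ automatically. Thus all $|T|^{|\oo|} = 4$ choices of $\alpha^*$ produce distinct good involutions of $R_n$, and by Theorem \ref{thm:core} these exhaust $\Good(R_n)$. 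Since $4 > 2 = |T|$, the lower bound of Corollary \ref{ineq:core} is strictly exceeded while the upper bound is attained.

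I do not anticipate any serious obstacle: the only real input is the implication $4 \mid n \Rightarrow n/2 \in 2G$, which places $T$ inside the even subgroup and makes the compatibility condition $\alpha\rho = \alpha$ vacuous. Without this divisibility hypothesis --- for instance, when $n \equiv 2 \pmod 4$ --- a non-constant $\alpha^*$ would typically fail this condition, because adding $n/2$ would flip parity and hence change the orbit; this explains why the hypothesis $4 \mid n$ cannot be relaxed.
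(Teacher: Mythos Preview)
Your proof is correct and takes essentially the same approach as the paper, which simply says ``Similar arguments show the following'' after Proposition~\ref{prop:dn-sharp}. Your key step---that $4\mid n$ forces $T\subseteq 2G$, so each orbit in $\oo$ is closed under translation by $T$ and hence every $\alpha^*:\oo\to T$ satisfies $\alpha\rho=\alpha$---is exactly the analogue of the paper's observation that ``each orbit in $\oo$ is closed under multiplication by elements of $T$'' in the dicyclic case.
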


\subsection{Enumeration}
We implemented Algorithms \ref{alg} and \ref{alg2} in \texttt{GAP} \cite{GAP4} and used them to compute $\Good(\Conj G)$ and $\Good(\Core G)$ for various groups $G$ up to order $23$. We provide our code and raw data (including the explicit mappings $\rho:G\to G$) in a GitHub repository \cite{code}.

Table \ref{table:conj} counts good involutions of
conjugation quandles $\Conj G$ of nonabelian groups up to order $22$. Table \ref{table:core} does the same for core quandles $\Core G$ of groups of exponent greater than $2$ up to order $23$. In Appendix \ref{app:b}, we specify these counts for each group $G$ considered in Tables \ref{table:conj} and~\ref{table:core}. 

\begin{table}
\centering
\caption{Number of good involutions across all nontrivial conjugation quandles $\Conj G$ of groups $G$ up to order $22$. Cf.\ Table \ref{tab:app1}.}
\label{table:conj}
\begin{tabular}{l|cccccccccc}
Order of $G$                        & 6 & 8 & 10 & 12 & 14  & 16  & 18   & 20    & 21 & 22   \\ \hline
Good involutions           & 1 & 32 & 1 & 17 & 1 & 13056 & 66 & 33 & 1 & 1 
\end{tabular}
\end{table}

\begin{table}
\centering
\caption{Number of good involutions across all nontrivial core quandles $\Core G$ of groups $G$ up to order $23$. Cf.\ Table \ref{tab:app2}.}
\label{table:core}
\begin{tabular}{l|ccccccccccc}
Order of $G$                        & 3 & 4 & 5 & 6 & 7  & 8  & 9   & 10    & 11 & 12 & 13   \\ \hline
Good involutions           & 1 & 4 & 1 & 3 & 1 & 72 & 2 & 3 & 1 & 31 & 1 \\
\end{tabular}
\begin{tabular}{l|cccccccccc}
Order of $G$                        & 14 & 15 & 16 & 17 & 18  & 19  & 20   & 21    & 22 & 23    \\ \hline
Good involutions           & 3 & 1 & 10856 & 1 & 7 & 1 & 47 & 2 & 3 & 1
\end{tabular}
\end{table}

\section{Constructions}\label{sec:construct}
In this section, we compute $\Good(\Conj X)$ for various connected conjugation subquandles $\Conj X$. This addresses Problem \ref{prob2}. By considering examples in which $\Conj X$ is inversion-closed, we also construct infinite families of CNS-quandles. This addresses Problems \ref{prob:cns} and \ref{prob:cns2}.

Carrying over our notation from Section \ref{sec:conj}, let $G$ be a group, let $\Conj X$ be a subquandle of $\Conj G$, and let $H:=\langle X\rangle\leq G$. Also, let $\iota:G\to G$ denote the inversion map $g\mapsto g\inv$.

\subsection{Constructing CNS-quandles}

The following provides a robust, group-theoretic way to construct CNS-quandles.

\begin{prop}\label{prop:cns}
    Suppose that $\Conj X$ satisfies the following three conditions:
    \begin{itemize}
        \item $X$ is a conjugacy class of $H$.
        \item There exists an element $x\in X$ such that $x^2\notin Z(H)$.
        \item $\Conj X$ is inversion-closed.
    \end{itemize}
    Then $(\Conj X,\iota|_X)$ is a CNS-quandle. 
\end{prop}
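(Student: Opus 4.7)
The plan is to verify the three defining properties of a CNS-quandle separately, each by invoking a result already established in the paper.

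First, I would establish \emph{connectedness}. Since $X$ is assumed to be a conjugacy class in $H=\langle X\rangle$, Lemma \ref{lem:conn} immediately gives that $\Conj X$ is connected.

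Second, I would verify that $\iota|_X$ is a \emph{good involution}. Since $\Conj X$ is inversion-closed, Example \ref{ex:prelim-conj} shows that $\iota|_X$ is a good involution of $\Conj X$, so $(\Conj X,\iota|_X)$ is a symmetric quandle.

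Third, I would show that $\Conj X$ is \emph{noninvolutory}. By Observation \ref{obs:conj2}, $\Conj X$ is a kei if and only if $y^2\in Z(H)$ for every $y\in X$. The second hypothesis provides an element $x\in X$ with $x^2\notin Z(H)$, so this condition fails and $\Conj X$ is not involutory.

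Combining these three facts, $(\Conj X,\iota|_X)$ is connected, noninvolutory, and symmetric, hence a CNS-quandle. I do not anticipate any genuine obstacle here: the proposition is essentially an assembly of previously established lemmas, with each hypothesis matched one-to-one to a defining property of CNS-quandles. The only minor care needed is confirming that ``noninvolutory'' in the CNS terminology refers to the underlying quandle (not to the good involution $\iota|_X$), which is consistent with the paper's usage.
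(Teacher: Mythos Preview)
Your proposal is correct and matches the paper's proof exactly: the paper likewise cites Lemma \ref{lem:conn}, Observation \ref{obs:conj2}, and Example \ref{ex:prelim-conj} to handle connectedness, noninvolutoriness, and the good involution, respectively.
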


\begin{proof}
    This follows from Lemma \ref{lem:conn}, Observation \ref{obs:conj2}, and Example \ref{ex:prelim-conj}.
\end{proof}

\subsection{Examples} In the following examples, we study good involutions of connected subquandles of special linear groups $\SL(m,q)$, symmetric groups $S_n$, and the special unitary group $\SU(2)$.

\begin{example}\label{ex:sl25}
    Let $G$ be the special linear group $\SL(2,5)$. We study good involutions of several connected subquandles of $\Conj G$. Using \texttt{GAP} \cite{GAP4}, we find that $G$ has seven conjugacy classes $X$ of size greater than $1$: four of size $12$, two of size $20$, and one of size $30$. 
    
    It is well-known that the only nontrivial normal subgroup of $G$ is the center \[Z(G)=\{\pm I_2\}.\] It follows that each of the seven aforementioned conjugacy classes $X$ generates $G$, so by Lemma \ref{lem:conn}, all of them are connected subquandles $\Conj X$ of $\Conj G$. Using \texttt{GAP}, we verify that the subquandles of orders $12$ and $20$ satisfy the conditions in Proposition \ref{prop:cns}. Therefore, $\Conj G$ contains four CNS-subquandles $(\Conj X,\iota|_X)$ of order $12$ and two CNS-subquandles of order $20$.

    On the other hand, the subquandle $\Conj X$ of order $30$ satisfies only the first two conditions in Proposition \ref{prop:cns}, so $(\Conj X,\iota|_X)$ is a connected symmetric kei. Since the element $-I_2\in Z(G)$ corresponds to the identity map in Corollary \ref{cor:connected}, it follows from Proposition \ref{ex:inv-gi} and Corollary \ref{cor:connected} that
    \[
    \Good(\Conj X)=\begin{cases}
        \{\iota|_X, \id_X\} & \text{if } |X|=30,\\ \{\iota|_X\} & \text{if }|X|=12,20.
    \end{cases}
    \]
\end{example}

\begin{example}\label{ex:sl}
    Let $m\geq 3$ be an integer, and let $\mathbb{F}_q$ be a finite field of order $q$ and characteristic $p\geq 3$. Let $G$ be the special linear group $\SL(m,q)$, and let $X\subset G$ be the conjugacy class of the~matrix
    \[
    M:=\begin{pmatrix}
        1 & 1 &\\
        & 1 &\\
        & & I_{n-2}
    \end{pmatrix}.
    \]
    Since $m\geq 3$, $X$ is the set of all transvections (that is, nonidentity linear transformations of $\mathbb{F}_q^m$ that fix a hyperplane) in $G$, which is a symmetric generating subset of $G$; see, for example, \cite{suzuki}*{Ch.\ 1, Sec.\ 9}. In particular,
    \[
    Z(H)=Z(G)=\{\lambda I_n\mid \lambda^n=1\},
    \]
    so we have
    \[
    M^2= \begin{pmatrix}
        1 & 2 &\\
        & 1 &\\
        & & I_{n-2}
    \end{pmatrix}\notin Z(H)
    \]
    because $p>2$. Hence, Proposition \ref{prop:cns} states that $(\Conj X,\iota|_X)$ is a CNS-quandle. In particular, the formula for $|X|$ given in \cite{fulman}*{Sec.\ 2} yields the first part of Proposition \ref{prop:sl}.

    Finally, we show that $\iota|_X$ is the \emph{only} good involution of $\Conj X$. By Corollary \ref{cor:connected} and Example \ref{obs:prelim-conj}, it suffices to show that there exists a transvection $B\in X$ such that for all transvections $A\in X$, the product $AB$ does not equal any nonidentity element $\lambda I$ of $Z(H)$. But this follows from the fact that $A$ and $B$ are unipotent (due to being transvections).
\end{example}

\begin{example}\label{ex:an}
    Let $n\geq 5$ be an integer, and let $X$ be any conjugacy class in the symmetric group $S_n$ satisfying one of the following conditions:
    \begin{itemize}
        \item $X$ contains odd permutations; that is, $X\cap A_n=\emptyset$.
        \item $X$ is a single conjugacy class in $A_n$; that is, $X$ does not split into two conjugacy classes in the alternating group $A_n$.
    \end{itemize}
    If $X$ does not contain involutions, then a straightforward verification of the conditions in Proposition \ref{prop:cns} shows that $(\Conj X,\iota|_X)$ is a CNS-quandle. In fact, since $S_n$ and $A_n$ are centerless, Corollary \ref{cor:centerless2} shows that $\iota|_X$ is the \emph{only} good involution of $\Conj X$.
    
    In particular, let $k$ be an integer such that $3\leq k\leq 2\lfloor n/2\rfloor$, and let $X$ be the set of $k$-cycles in $S_n$. Then we obtain the second part of Proposition \ref{prop:sl}.
\end{example}

\begin{table}[]
\caption{Unique sizes of conjugacy classes $X$ of $S_n$ that satisfy the conditions in Example \ref{ex:an}.}
\label{table:sn}
\begin{tabular}{c|c}
$n$ & Unique sizes of $X$                                                                                                                                                                   \\ \hline
$5$ & $20$, $30$                                                                                                                                                                     \\
$6$ & $40$, $90$, $120$                                                                                                                                                        \\
$7$ & $70$, $210$, $280$, $420$, $504$, $630$, $840$                                                                                                                                 \\
$8$ & $112$, $420$, $1120$, $1260$, $1344$, $1680$, $2520$, $3360$, $4032$, $5040$                                                                                                   \\
$9$ & \begin{tabular}[c]{@{}c@{}}$168$, $756$, $2240$, $2520$, $3024$, $3360$, $7560$, $9072$, $10080$,\\ $11340$, $15120$, $18144$, $20160$, $25920$, $30240$, $45360$\end{tabular}
\end{tabular}
\end{table}

Addressing Problem \ref{prob:cns2}, Table \ref{table:sn} displays the orders of the CNS-subquandles $(\Conj X,\iota|_X)$ of $(\Conj S_n,\iota)$ constructed in Example \ref{ex:an} for all $5\leq n\leq 9$, omitting duplicate orders.
    These orders were obtained using the \texttt{GAP} \cite{GAP4} program provided in Appendix \ref{app:a}.

\subsubsection{An infinite-order example}
While the previous examples have studied finite conjugation subquandles, our methods also apply in the infinite-order case.

\begin{example}\label{ex:su2}
    Let $G$ be the special unitary group $\SU(2)$, considered as a matrix group. We compute the good involutions of certain connected subquandles of $\Conj G$ called \emph{spherical quandles}, which have various knot-theoretic and representation-theoretic applications \citelist{\cite{su2}\cite{su2-2}}.

    It is well-known (see, for example, \cite{hall}*{Ex.\ 9 in Ch.\ 11, Sec.\ 8}) that the diagonal matrices \[d_\theta:=\operatorname{diag}(e^{i\theta},e^{-i\theta})\] with $\theta\in[0,\pi]$ represent all conjugacy classes of $G$, and an element $g\in G$ is conjugate to $d_\theta$ if and only if the eigenvalues of $g$ are $e^{\pm i\theta}$. Let $X_\theta$ denote the conjugacy class of $d_\theta$ in $G$.

    Fix $\theta\in (0,\pi)$. Since the only nontrivial normal subgroup of $G$ is the center
    \[
    Z(G)=\{\pm I_2\},
    \]
    it follows that $X_\theta$ generates $G$. Thus, Lemma \ref{lem:conn} shows that $\Conj X_\theta$ is connected. By the above discussion, $\Conj X_\theta$ is also inversion-closed. By verifying the conditions in Corollary \ref{cor:connected} and Proposition \ref{prop:cns}, we deduce that
    \[
    \Good(\Conj X_\theta)=\begin{cases}
        \{\iota|_{X_\theta}, \id_{X_\theta}\} & \text{if } \theta=\pi/2,\\ \{\iota|_{X_\theta}\} & \text{otherwise,}
    \end{cases}
    \]
    and $(\Conj(X_\theta),\iota|_{X_\theta})$ is a CNS-quandle if and only if $\theta\neq \pi/2$. In particular, $(\Conj G,\iota)$ contains uncountably many CNS-subquandles.
\end{example}

\section{Connections with Legendrian racks}\label{sec:leg}

In this section, we prove Theorem \ref{thm:legendrian} and several other equivalences of categories (Theorem \ref{thm:gl}). Before that, we introduce kink-involutory racks and discuss medial racks, GL-racks, and Legendrian racks. We also discuss a canonical rack automorphism $\theta$ that plays a fundamental role in the theory.

\subsection{Discussion of results}

The following is the main result of this section.

\begin{thm}\label{thm:gl}
    In each of the following pairs or triples, the categories mentioned are equivalent (in fact, isomorphic), and so are their full subcategories whose objects are medial:
    \begin{itemize}
        \item Racks $(\Rc)$ and Legendrian racks $(\LR)$.
        \item Racks for which the canonical automorphism $\theta$ is an involution $(\kir)$ and Legendrian quandles $(\LQ)$.
        \item Involutory GL-racks $(R,\uu)$ in which $\uu$ is an involution and involutory symmetric racks.
        \item Involutory racks $(\InvRk)$, Legendrian kei $(\LK)$, and symmetric kei $(\SK)$.
    \end{itemize}
\end{thm}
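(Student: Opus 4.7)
The plan is to construct explicit functors using the canonical rack automorphism $\theta$ defined on any rack $(X,s)$ by $\theta(x) := s_x(x)$. A standard computation using the rack axiom shows $\theta \in C_{\Aut R}(\Inn R)$, and every rack homomorphism intertwines the respective $\theta$'s, so $\theta$ is functorial in $R$.

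For the equivalence $\Rc \simeq \LR$ and its restriction $\kir \simeq \LQ$, define $F : \Rc \to \LR$ by $F(R) := (R, \theta_R)$ on objects and the identity on morphisms, with candidate inverse the forgetful functor $U : \LR \to \Rc$. The composition $UF$ is trivially the identity, while $FU = \id_{\LR}$ reduces to the observation of Karmakar et al.\ that the Legendrian rack axioms force the Legendrian structure $\uu$ to coincide with $\theta$; I would spell this out directly from the defining identities linking $\uu$ to $s$. Restricting to the subcategory of racks with $\theta^2 = \id_X$ lands in Legendrian racks with $\uu$ involutory, yielding $\kir \simeq \LQ$. Mediality depends only on the rack operation and is preserved by $F$ and $U$, so the medial restrictions follow at once.

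For the third equivalence, Proposition \ref{cor:inv-cent} is the bridge: good involutions of an involutory rack $R$ are exactly the involutions in $C_{\Aut R}(\Inn R)$, which is also the admissibility condition for an involutory GL-rack structure $\uu$. The identity on underlying data then yields an isomorphism of categories after matching morphisms in the obvious way.

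The final triangle $\InvRk \simeq \LK \simeq \SK$ requires a twist. A short calculation shows that on an involutory rack, $\theta^2 = \id_X$, so $F$ restricts to an equivalence $\InvRk \simeq \LK$. To obtain $\InvRk \simeq \SK$, I would define $\Phi : \SK \to \InvRk$ by $((X,s), \rho) \mapsto (X, \rho s)$, where $(\rho s)_x := \rho s_x$; using that $\rho \in C_{\Aut R}(\Inn R)$ and $\rho^2 = \id$, one checks that $(\rho s)_x^2 = \id_X$ and that the rack axiom holds. Conversely, $\Psi : \InvRk \to \SK$ sends $(X,s) \mapsto ((X, \theta s), \theta)$, where the twisted structure $\theta s$ is a kei because $(\theta s)_x(x) = \theta^2(x) = x$, and $\theta$ is then a good involution of the twisted kei. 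Mutual inverseness boils down to the identity that the canonical automorphism of $\rho s$ equals $\rho$ whenever $s$ is a kei. The main obstacle is verifying this twist construction in detail --- in particular, functoriality on morphisms and compatibility of the good involution with the kei axioms --- after which the coincidence of good involutions and Legendrian structures on kei follows, as both sets are characterized as involutions in $C_{\Aut R}(\Inn R)$.
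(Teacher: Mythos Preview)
Your proposal has a genuine gap in the first two equivalences. The claim that ``the Legendrian rack axioms force the Legendrian structure $\uu$ to coincide with $\theta$'' is false: by definition a Legendrian rack is a GL-rack $(R,\uu)$ satisfying $\theta_R = \uu^{-2}$, not $\uu = \theta_R$. Consequently your functor $F(R) := (R,\theta_R)$ does not even land in $\LR$, since for $(R,\theta_R)$ to be Legendrian one needs $\theta_R = \theta_R^{-2}$, i.e.\ $\theta_R^3 = \id_X$, which fails for most racks (e.g.\ any permutation rack whose permutation $\sigma$ has order not dividing $3$). Dually, the forgetful functor $U:\LR\to\Rc$ is neither essentially surjective (the permutation rack on three points with $\sigma$ a transposition admits no Legendrian structure, as $\sigma$ has no square root in $C_{S_3}(\sigma)=\{\id,\sigma\}$) nor injective on isomorphism classes (on the trivial quandle of order $2$, both $\uu=\id$ and the transposition are Legendrian structures, yielding non-isomorphic Legendrian quandles with the same underlying rack). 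So the pair $(F,U)$ cannot witness $\Rc\simeq\LR$, and the same obstruction breaks your argument for $\kir\simeq\LQ$.

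The paper's fix is to \emph{twist the rack structure} rather than merely append $\theta$: it sends $(X,s,\uu)\in\LR$ to the rack $(X,\uu^3 s)$ and, inversely, $R=(X,s)\in\Rc$ to $(X,\theta_R^{-3}s,\theta_R)$. The point is that the canonical automorphism of the twisted rack $(X,\uu^3 s)$ computes to $\uu^3\theta_R = \uu$ (using $\theta_R=\uu^{-2}$), so the Legendrian structure is recoverable from the twisted rack alone. You already discovered exactly this twist in your treatment of $\InvRk\simeq\SK$ via $\Phi$ and $\Psi$; that part is correct and is precisely the restriction of the paper's functor to $\LK=\SK$ (where $\uu^3=\uu$ because $\uu^2=\id$). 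What is missing is carrying the same idea up to the level of $\Rc\simeq\LR$, after which the intermediate equivalences follow by restriction.
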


After we construct functors between $\Rc$ and $\LR$, Theorem \ref{thm:gl} will more or less follow from the definitions, all of which we provide in this section.

\subsubsection{Corollaries}
In tandem with \cite{taGL}*{Thm.\ 5.6}, Theorem \ref{thm:gl} yields the following.

\begin{cor}
    The category $\LR$ and the category of GL-quandles are isomorphic, and so are their full subcategories whose objects are medial.
\end{cor}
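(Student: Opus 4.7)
The plan is to obtain the corollary as a formal consequence of Theorem \ref{thm:gl} composed with the cited result \cite{taGL}*{Thm.\ 5.6}. Theorem \ref{thm:gl} provides an isomorphism of categories $F\colon \Rc \bij \LR$, while \cite{taGL}*{Thm.\ 5.6} supplies an isomorphism $G$ between $\Rc$ and the category of GL-quandles. The desired isomorphism between $\LR$ and the category of GL-quandles is then simply the composition $G\circ F\inv$. Since both $F$ and $G$ are genuine isomorphisms of categories and not merely equivalences, so is their composition.

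For the statement concerning medial objects, I would invoke the fact that each of the ingredient isomorphisms restricts to an isomorphism between the corresponding full subcategories of medial objects. For $F$, this restriction is built into the second half of Theorem \ref{thm:gl}; for $G$, it is part of \cite{taGL}*{Thm.\ 5.6}. Hence the restriction of $G\circ F\inv$ to the full subcategory of medial Legendrian racks yields an isomorphism onto the full subcategory of medial GL-quandles.

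The substantive work all occurs upstream, in establishing Theorem \ref{thm:gl} and in the cited paper. Once those are in hand, the present corollary requires no new argument beyond composing the two functors and observing that mediality is preserved on both sides, which is automatic from the ingredient statements. Consequently, I expect no real obstacle here — the corollary is, in effect, a remark about composing known isomorphisms, and the only point worth writing out explicitly is the assembly of $F$, $G$, and their medial restrictions into the single arrow $G\circ F\inv$.
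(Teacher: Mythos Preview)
Your proposal is correct and matches the paper's approach exactly: the paper simply states that the corollary follows ``in tandem with \cite{taGL}*{Thm.\ 5.6}'' and Theorem~\ref{thm:gl}, which is precisely the composition $G\circ F^{-1}$ you describe, together with the observation that both ingredient isomorphisms preserve mediality.
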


By combining Theorem \ref{thm:gl} with \cite{taGL}*{Thm.\ 5.1}, we deduce the center $Z(\SK)$ of the category of symmetric kei.

\begin{cor}
    The center of the category $\SK$ is the group generated by the collection of all good involutions: \[Z(\SK)=\langle\rho\mid\rho^2=1\rangle\cong\Z/2\Z.\]
\end{cor}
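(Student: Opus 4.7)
The plan is to transport the computation of $Z(\LK)$ from \cite{taGL}*{Thm.\ 5.1} across the category isomorphism $\SK\cong\LK$ provided by Theorem \ref{thm:gl}. The center of a category, defined as the commutative monoid of natural transformations from the identity functor to itself, is automatically preserved by any isomorphism of categories, so once I identify what the Legendrian structure of a Legendrian kei corresponds to on the symmetric-kei side, the result will follow by translation.

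First I would invoke Theorem \ref{thm:gl}, which asserts an isomorphism of categories $\SK\bij\LK$ under which the good involution $\rho$ on a symmetric kei $(R,\rho)$ and the Legendrian structure $\uu$ on the corresponding Legendrian kei $(R,\uu)$ coincide as the same permutation of the underlying set. Any natural endomorphism of $\id_\SK$ therefore corresponds bijectively to a natural endomorphism of $\id_\LK$, and the bijection respects composition, giving an isomorphism of central monoids. Next I would apply \cite{taGL}*{Thm.\ 5.1}, which identifies $Z(\LK)\cong\Z/2\Z$ and specifies its nontrivial element as the natural transformation whose component at each $(R,\uu)$ is $\uu$ itself. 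Translating back, the generator is the transformation sending $(R,\rho)$ to $\rho$, giving the claimed presentation $\langle\rho\mid\rho^2=1\rangle$.

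The only genuine step to verify is that the assignment $(R,\rho)\mapsto\rho$ actually defines a natural transformation $\id_\SK\Rightarrow\id_\SK$ in the first place, but this is immediate from Definition \ref{def:symmetric} and the definition of a symmetric rack homomorphism, which is precisely a rack homomorphism intertwining the two good involutions. The involution relation $\rho^2=\id_R$ holding objectwise yields $\rho^2=\id$ as natural transformations, confirming $Z(\SK)\cong\Z/2\Z$. The main obstacle is really just checking that no additional natural transformations appear when passing from whichever category is treated in \cite{taGL}*{Thm.\ 5.1} to $\LK$; but since good involutions and Legendrian structures coincide for kei by Theorem \ref{thm:gl}, this reduces to unwinding the identification and poses no new difficulty.
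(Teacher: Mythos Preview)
Your proposal is correct and takes essentially the same approach as the paper, which simply states that the corollary follows ``by combining Theorem \ref{thm:gl} with \cite{taGL}*{Thm.\ 5.1}.'' You have spelled out the transport-of-structure argument and the identification of $\rho$ with $\uu$ in more detail than the paper does, but the underlying strategy is identical.
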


Since $\LK$ and $\SK$ are isomorphic, classification results in either category hold in the other. For example, Theorem \ref{thm:core} classifies all Legendrian structures on core quandles $\Core G$, and (by Observation \ref{obs:conj2}) Theorem \ref{thm:conj} classifies all Legendrian structures on subquandles $\Conj X$ of conjugation quandles $\Conj G$ such that $x^2\in Z(\langle X\rangle)$ for all $x\in X$. 

Conversely, \cite{taGL}*{Prop.\ 4.11} slightly refines a classification result of Kamada and Oshiro \cite{symm-quandles-2}*{Thm.\ 3.2} for dihedral quandles of even order. Note that Corollary \ref{cor:taka} recovers their result for dihedral quandles of odd order.

\begin{cor}
    Let $n\in\Z^+$ be a positive integer, and define affine transformations $\alpha_{m,b}:\Z/2n\Z\to\Z/2n\Z$ by $x\mapsto mx+b$. Let $R_{2n}=T(\Z/2n\Z)$ be the dihedral quandle of order $2n$. Then
    \[
    \Good R_{2n}=\begin{cases}
        \{\id_{\Z/2n\Z},\alpha_{1,n}\} & \text{if }n \text{ is odd,}\\
        \{\id_{\Z/2n\Z},\alpha_{1,n},\alpha_{n+1,0},\alpha_{n+1,n}\} & \text{otherwise.}
    \end{cases}
    \]
    Of the corresponding symmetric kei, the only isomorphic ones are \[(R_{2n},\alpha_{n+1,0})\cong (R_{2n},\alpha_{n+1,n})\]
    for even $n$. In particular, there are two isomorphism classes of symmetric kei of the form $(R_{2n},\rho)$ when $n$ is odd, and there are three when $n$ is even.
\end{cor}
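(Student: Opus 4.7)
The plan is to derive everything from Theorem~\ref{thm:core} applied to the additive cyclic group $G=\Z/2n\Z$, using that $\Core G=T(G)=R_{2n}$. Since $G$ is abelian, $Z(G)=G$ and the relevant torsion subgroup is $T=\{z\in G\mid 2z=0\}=\{0,n\}$. The Takasaki rack structure $s_g(h)=2g-h$ shows that the $\Inn R_{2n}$-orbits are exactly the cosets of $2G$, so there are two orbits—the even and the odd residue classes modulo $2n$—which I will call $o_0$ and $o_1$. By Theorem~\ref{thm:core} (cf.\ Remark~\ref{rmk:core}), a good involution $\rho$ is precisely a map $g\mapsto\alpha(g)+g$ where $\alpha=\alpha^*\pi$ for some $\alpha^*\colon\{o_0,o_1\}\to T$ satisfying $\alpha(g+\alpha(g))=\alpha(g)$ for all $g$.

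The compatibility $\alpha\rho=\alpha$ is automatic whenever $\alpha(g)\in 2G$, i.e.\ precisely when $n$ is even (so that $n\in 2G$). If $n$ is odd, then $n\notin 2G$, so assigning $\alpha^*=n$ on one orbit forces the same value on the other; this leaves only the two constant functions, corresponding to $\id_{\Z/2n\Z}=\alpha_{1,0}$ and $\alpha_{1,n}$. If $n$ is even, all four choices of $\alpha^*$ are admissible, and I would translate them one by one: the constant functions yield $\alpha_{1,0}$ and $\alpha_{1,n}$, while the two non-constant choices yield $\alpha_{n+1,0}$ and $\alpha_{n+1,n}$ via the elementary congruences $(n+1)g\equiv g\pmod{2n}$ for even $g$ and $(n+1)g\equiv g+n\pmod{2n}$ for odd $g$, both valid when $n$ is even.

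For the classification of the corresponding symmetric kei, I would invoke Observation~\ref{obs:iso-good}, so that isomorphism classes of $(R_{2n},\rho)$ correspond to orbits of $\Aut R_{2n}$ acting on $\Good R_{2n}$ by conjugation. Using the well-known description $\Aut R_{2n}=\{\alpha_{a,b}\mid a\in(\Z/2n\Z)^\times,\ b\in\Z/2n\Z\}$, a direct calculation yields the conjugation formula
\[
\alpha_{a,b}\,\alpha_{m,c}\,\alpha_{a,b}^{-1}=\alpha_{m,\,ac+b(1-m)}.
\]
Every unit $a$ is odd, so $an\equiv n\pmod{2n}$ when $n$ is even, and hence $\alpha_{1,0}$ and $\alpha_{1,n}$ are fixed. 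Taking $m=n+1$, the shift becomes $-bn\pmod{2n}$, which takes the values $0$ and $n$ as $b$ ranges over $\Z/2n\Z$, so $\alpha_{n+1,0}$ and $\alpha_{n+1,n}$ form a single orbit. For $n$ odd, $\id$ and $\alpha_{1,n}$ cannot be conjugate, since $\phi=\alpha_{1,n}\phi$ would force $n=0$.

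The main obstacle is the parity bookkeeping modulo $2n$: both the identification of the four orbit-functions with the listed affine maps and the orbit-swap computation $-bn\in\{0,n\}$ rest on the same observation that $n$ is $2$-torsion in $\Z/2n\Z$ precisely when $n$ is even. One minor edge case is $n=1$, where $R_2$ is the trivial quandle of order $2$ and Theorem~\ref{thm:core} does not literally apply; but Example~\ref{ex:trivial} shows that $\Good R_2$ consists of both involutions of the underlying set, which coincides with $\{\alpha_{1,0},\alpha_{1,1}\}$ as predicted by the odd-$n$ branch of the formula.
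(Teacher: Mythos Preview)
Your argument is correct. You compute $\Good R_{2n}$ directly from Theorem~\ref{thm:core}: the two $\Inn R_{2n}$-orbits are indeed the parity classes, the compatibility condition $\alpha\rho=\alpha$ reduces to whether $n$ preserves parity, and your identification of the four functions with the affine maps $\alpha_{1,0},\alpha_{1,n},\alpha_{n+1,0},\alpha_{n+1,n}$ via $(n+1)g\equiv g+n\cdot(g\bmod 2)$ is sound. The isomorphism classification via the conjugation action of $\Aut R_{2n}=\mathrm{Aff}(\Z/2n\Z)$ on $\Good R_{2n}$ is also carried out correctly, and you handle the $n=1$ edge case appropriately through Example~\ref{ex:trivial}.

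The paper, however, does not prove this corollary via Theorem~\ref{thm:core}. Instead it obtains the statement as a consequence of the categorical equivalence $\LK\cong\SK$ in Theorem~\ref{thm:gl}: good involutions of the kei $R_{2n}$ coincide with Legendrian structures, and the latter were classified (together with their isomorphism types) in \cite{taGL}*{Prop.\ 4.11}. Your route is more self-contained within this paper and illustrates concretely how Algorithm~\ref{alg2} runs on a cyclic group, whereas the paper's route serves to exhibit the Legendrian--symmetric correspondence in action and to import the isomorphism classification for free from the GL-rack literature rather than recomputing the $\Aut R_{2n}$-orbits by hand.
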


\subsubsection{Computer search}

\begin{table}[]
\caption{Enumeration of isomorphism classes of various types of Legendrian racks up to order $8$.}
\label{table:gl}
\centering
\begin{tabular}{l|ccccccccc}
Order                        & 0 & 1 & 2 & 3 & 4  & 5  & 6   & 7    & 8     \\ \hline
Legendrian racks           & 1 & 1 & 2 & 6 & 19 & 74 & 353 & 2080 & 16023 \\
Medial Legendrian racks    & 1 & 1 & 2 & 6 & 18 & 68 & 329 & 1965 & 15455 \\
Legendrian quandles        & 1 & 1 & 2 & 5 & 15 & 54 & 240 & 1306 & 9477  \\
Medial Legendrian quandles & 1 & 1 & 2 & 5 & 14 & 48 & 219 & 1207 & 9042 \\
Legendrian kei & 1 & 1 & 2 & 5 & 13 & 42 & 180 & 906 & 6317 \\
Medial Legendrian kei & 1 & 1 & 2 & 5 & 12 & 38 & 168 & 850 & 6090 
\end{tabular}
\end{table}

Adding to our classification of symmetric racks up to order $8$ using \texttt{GAP} \cite{GAP4}, we used the classification of GL-racks up to order $8$ in \cite{taGL} to classify various types of Legendrian racks up to isomorphism. 

We provide our code and data in a GitHub repository \cite{gl-code}. Table \ref{table:gl} enumerates our data; cf.\ Table \ref{table:counts}, \cite{library}*{Tab.\ 1}, and \cite{taGL}*{Tab.\ A.1}. Note that the rows of Table \ref{table:gl} also count isomorphism classes of the other objects listed in Theorem \ref{thm:gl}, some of which were recorded in \cite{library}*{Tab.\ 1}.

\subsection{Preliminaries: Racks} We discuss the category $\Rc$ and several full subcategories of $\Rc$. See \citelist{\cite{rack-roll}\cite{center}\cite{joyce}} for further discussion of racks and quandles from a categorical perspective.

\subsubsection{The canonical automorphism $\theta$}
Every rack $R=(X,s)$ has a canonical automorphism $\theta_R$ defined by \[x\mapsto s_x(x)\] for all $x\in X$; see \cite{center}*{Prop.\ 2.5} and cf.\ \cite{taGL}*{Prop.\ 2.16}.\footnote{Some authors, especially those working in framed knot theory (e.g., \cite{quandlebook}*{p.\ 149}), call $\theta_R$ the \emph{kink map} of $R$ and denote it by $\pi$. This motivates our choice of nomenclature in Definition \ref{def:kink}.} Evidently, $R$ is a quandle if and only if $\theta_R=\id_X$, so we can loosely think of $\theta_R$ as the obstruction for $R$ to be a quandle. When there is no ambiguity, we will suppress the subscript and only write $\theta$ to mean $\theta_R$. 

\begin{example}
        If $(X,s)$ is a permutation rack defined by $\sigma\in S_X$, then $\theta=\sigma$.
    \end{example}

    Recall that the \emph{center} of a category $\mathcal{C}$ is the commutative monoid $Z(\mathcal{C})$ of natural endomorphisms of the identity functor $\mathbf{1}_\mathcal{C}$. 
Concretely, $\eta\in Z(\mathcal{C})$ if and only if, for all objects $R,S$ and morphisms $\varphi:R\to S$ in $\mathcal{C}$, the component $\eta_R$ is an endomorphism of $R$, and
\[
\eta_S\varphi=\varphi\eta_R.
\]

Our proof of Theorem \ref{thm:gl} uses the following result of Szymik \cite{center} in 2018.

\begin{prop}[\cite{center}*{Thm.\ 5.4}]
    The center $Z(\Rc)$ of the category of racks is the free group $\langle \theta\rangle\cong\Z$ generated by the collection $\theta$ of canonical automorphisms $\theta_R$ for all racks $R$.
\end{prop}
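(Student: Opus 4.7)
The plan is a Yoneda-style argument that reduces everything to the free rack on one generator. The three steps are: (i) verify that $\theta$ is a natural automorphism of $\mathrm{id}_{\Rc}$; (ii) identify the free rack $F$ on one generator and compute its endomorphism monoid; and (iii) for each $\eta\in Z(\Rc)$, use naturality along the classifying maps $F\to R$ to force $\eta=\theta^n$ for some $n\in\Z$. Step (i) is immediate: for any rack morphism $\varphi:(X,s)\to(Y,t)$ and any $x\in X$,
\[
\theta_Y\varphi(x)=t_{\varphi(x)}(\varphi(x))=\varphi(s_x(x))=\varphi\theta_X(x),
\]
by the rack-homomorphism identity, and each $\theta_R$ is an automorphism, so $\theta$ is invertible in $Z(\Rc)$.

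For step (ii), I would construct the free rack on one generator explicitly as $F:=\Z$ with $s_n(k):=k+1$ for all $n,k\in\Z$; this is visibly a rack, with $\theta_F$ equal to the shift $k\mapsto k+1$. The key input is the \emph{collapse identity} $s_{\theta_R(x)}=s_x$ valid in every rack $R$, which follows from specializing the rack axiom $s_xs_y=s_{s_x(y)}s_x$ to $y=x$. Using collapse (and its consequence that $\theta_R$ commutes with $s_x$ in $R$), one verifies that for any rack $R$ and $x\in R$ the map $\phi_x:F\to R$, $n\mapsto\theta_R^n(x)$, is a well-defined rack homomorphism (the negative-index direction needs $\theta_R^{-1}(x)=s_x^{-1}(x)$, itself a short consequence of collapse); uniqueness follows by induction on $|n|$ from $\phi_x(0)=x$. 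Hence $F$ is the free rack on $\{0\}$, and a direct calculation shows $\operatorname{End}(F)=\Aut(F)=\{k\mapsto k+c:c\in\Z\}=\langle\theta_F\rangle\cong\Z$.

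For step (iii), given $\eta\in Z(\Rc)$, the component $\eta_F$ is a rack endomorphism of $F$, hence $\eta_F=\theta_F^c$ for a unique $c\in\Z$. For arbitrary $R$ and $x\in R$, naturality of $\eta$ along $\phi_x$ gives $\eta_R\phi_x=\phi_x\eta_F$; evaluating at $0\in F$ yields $\eta_R(x)=\phi_x(c)=\theta_R^c(x)$. Since $R$ and $x$ were arbitrary, $\eta=\theta^c$, and distinctness of the powers $\theta^n$ is already visible on $F$, so $Z(\Rc)=\langle\theta\rangle\cong\Z$. The main obstacle I anticipate is step (ii): showing that $\Z$ with the uniform shift really is the free rack on one generator requires deploying the collapse identity several times to kill off all the would-be new inner actions such as $s_{s_{x_0}(x_0)}$, and once that bookkeeping is done the Yoneda step is essentially forced.
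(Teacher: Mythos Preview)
Your proof is correct, but note that the paper does not actually prove this proposition: it is quoted verbatim from Szymik \cite{center}*{Thm.\ 5.4} and used as a black box. Your Yoneda argument via the free rack on one generator is essentially Szymik's own proof in that reference, so there is no meaningful difference of approach to report. The verifications you flag as potential obstacles---that $(\Z,\,s_n(k)=k+1)$ is free on $\{0\}$, and that its endomorphism monoid is $\langle\theta_F\rangle$---go through exactly as you outline once the collapse identity $s_{\theta_R(x)}=s_x$ is in hand.
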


\subsubsection{Kink-involutory racks}
We introduce a class of racks called \emph{kink-involutory racks} that generalize quandles and involutory racks.

    \begin{definition}\label{def:kink}
            A rack $R=(X,s)$ is called \emph{kink-involutory} if its canonical automorphism $\theta$ is an involution.  
            Let $\kir$ be the full subcategory of $\Rc$ whose objects are kink-involutory.
    \end{definition}

    \begin{example}
        All quandles are kink-involutory.
    \end{example}

\begin{prop}\label{prop:inv-racks}
        All involutory racks are kink-involutory.
    \end{prop}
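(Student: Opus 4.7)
The plan is to prove this by a short direct calculation from the rack axiom, unwinding the definition of $\theta$. By definition, we must show that $\theta^2=\id_X$, which unfolds to the identity $s_{s_x(x)}(s_x(x))=x$ for all $x\in X$.

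The key observation is that the rack axiom $s_xs_y=s_{s_x(y)}s_x$, applied in the special case $y=x$, gives the relation $s_x^2=s_{\theta(x)}s_x$ for every $x\in X$. Under the involutory hypothesis $s_x^2=\id_X$, this immediately yields $s_{\theta(x)}=s_x\inv$, and invoking the involutory hypothesis once more gives $s_{\theta(x)}=s_x$. From here, I would finish by computing
\[
\theta^2(x)=s_{\theta(x)}(\theta(x))=s_x(s_x(x))=s_x^2(x)=x,
\]
which is the required equality.

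There is no real obstacle here; the statement is essentially immediate from the rack axiom, and the only thing to double-check is that the manipulation $s_{\theta(x)}=s_x$ is valid inside $S_X$ (which it is, since we are equating two permutations that agree as consequences of equations in the rack structure map). Alternatively, one could phrase the argument categorically by noting that $\theta\in Z(\Rc)$ acts by rack endomorphisms, so $\theta_R^2=\theta_{R}\circ\theta_R$ is itself a natural endomorphism and its involutivity reduces to a pointwise check, which is exactly the computation above; but the direct route is shorter and more transparent.
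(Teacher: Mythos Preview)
Your proof is correct and follows essentially the same route as the paper's: both reduce $\theta^2(x)=x$ to the identity $s_{\theta(x)}=s_x$ obtained from the rack axiom with $y=x$, then apply $s_x^2=\id_X$. The only cosmetic difference is that you pass through $s_{\theta(x)}=s_x^{-1}$ before invoking involutoriness a second time, whereas the paper (and indeed any rack) gets $s_{\theta(x)}=s_x$ directly by cancelling $s_x$ on the right of $s_x^2=s_{\theta(x)}s_x$.
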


    \begin{proof}
        Let $R=(X,s)$ be an involutory rack. For all $x\in X$,
        \[
        \theta^2(x)=\theta s_x(x)=s_{s_x(x)}s_x(x)=s_x s_x(x)=x.
        \]
        Thus,  $\theta^2=\id_X$.
    \end{proof}
    
\begin{example}
            For an example of a kink-involutory rack that is neither involutory nor a quandle, consider the rack $R=(X,s)$ with underlying set $X=\{1,2,3,4,5\}$ whose rack structure is given by the permutations
        \[
        s_1=s_2=(12)(345),\quad\quad s_3=s_4=s_5=(12)
        \]
        in cycle notation. Then $s_1^2\neq \id_X$, so $R$ is not involutory. Also, $\theta=(12)$ is a nonidentity involution, so $R$ is kink-involutory and not a quandle.
        \end{example}
        
\subsubsection{Medial racks}
We discuss a class of racks called \emph{medial racks} or \emph{abelian racks}, which Joyce \cite{joyce} introduced to study Alexander invariants of knots.\footnote{In this paper, we adopt the name ``medial'' over ``abelian.'' This is to prevent confusion with \emph{commutative} racks, which satisfy the much rarer condition that $s_x(y)=s_y(x)$ for all $x,y\in X$.}  Certain categorical and universal-algebraic notions (see, for example, \cite{rack-roll}*{Sec.\ 3}, \cite{Hom}*{Thm.\ 12}, or \cite{taGL}*{Sec.\ 2.3.3}) allow medial racks to enhance coloring invariants of smooth knots \cite{Hom}*{Ex.\ 9} and Legendrian knots \cite{taGL}*{Cor.\ 3.21}.

    	\begin{definition}
		[\cite{rack-roll}*{Sec.\ 3}]\label{def:medial}
		Let $R=(X,s)$ be a rack, and consider the \emph{product rack} $R\times R$. We say that $R$ is \emph{medial} or \emph{abelian} if the map $X\times X\to X$ defined by \[(x,y)\mapsto s_y(x)\] is a rack homomorphism from $R\times R$ to $R$.
	\end{definition}

    \begin{rmk}
        Unpacking Definition \ref{def:medial} recovers the pointwise definition of mediality introduced by Joyce \cite{joyce}*{Def.\ 1.3}.
    \end{rmk}

    \begin{example}
        All permutation racks and Takasaki kei are medial.
    \end{example}

\subsection{Preliminaries: GL-racks}
We discuss GL-racks, which Karmakar et al.\ \cite{karmakarGL} and Kimura \cite{bi} independently introduced in 2023 to develop invariants of Legendrian knots in contact three-space. See \cite{taGL} for a development of the algebraic theory of GL-racks and \cite{he} for applications. 

While the following definition contrasts with the original definitions of Karmakar et al.\ and Kimura, their equivalence was proven in \cite{taGL}*{Prop.\ 3.12}.

    	\begin{definition}[\cite{taGL}]\label{def:gl-rack}
		Given a rack $R=(X,s)$, a \emph{GL-structure} on $R$ is a rack automorphism $\uu\in\Aut R$ such that $\uu s_x=s_x\uu$ for all $x\in X$.
		We call the pair $(R,\uu)$ a \emph{GL-rack}, \emph{generalized Legendrian rack}, or \emph{bi-Legendrian rack}.
        \end{definition}

    	\begin{example}[\cite{bi}*{Ex.\ 3.7}]
		Given a permutation rack $P=(X,s)$ defined by $\sigma\in S_X$, the set of GL-structures on $P$ is precisely the centralizer $C_{S_X}(\sigma)$. 
	\end{example}
	
	\begin{definition}\label{def:gl-hom}
		A \emph{GL-rack homomorphism} between two GL-racks $(R_1,\uu_1)$ and $(R_2,\uu_2)$ is a rack homomorphism 
        $\varphi$ from $R_1$ to $R_2$ that satisfies 
        \(\varphi\uu_1=\uu_2\varphi\). Let $\glr$ be the category of GL-racks.
	\end{definition}

    \subsubsection{Centrality of GL-structures}
    By Definition \ref{def:gl-hom}, all integer powers of GL-structures $\uu$ and canonical rack automorphisms $\theta$ lie in the categorical center $Z(\glr)$. In fact, $Z(\glr)\cong\Z^2$ is the free abelian group generated by these two collections of automorphisms; see \cite{taGL}*{Thm.\ 5.1}.

    \subsubsection{Legendrian racks} As their name suggests, \emph{Legendrian racks} are a special class of GL-racks defined below. They were introduced by Ceniceros et al.\ \cite{ceniceros} in 2021, and they generalize the constructions of Kulkarni and Prathamesh \cite{original}.

            Although the following definition differs from the original definition of Ceniceros et al., their equivalence was proven in \cite{taGL}*{Cor.\ 3.13}.

    \begin{definition}[\cite{taGL}]
        Let $(R,\uu)$ be a GL-rack. We say that $(R,\uu)$ is a \emph{Legendrian rack} if $\theta=\uu^{-2}$, in which case we say that $\uu$ is a \emph{Legendrian structure} on $R$. 
        
        We say that a Legendrian rack $(R,\uu)$ is a \emph{Legendrian quandle} if $R$ is a quandle or, equivalently, if $\uu$ is an involution. Similarly, we say that $(R,\uu)$ is a \emph{Legendrian kei} if $R$ is a kei.
        
        Let $\LR$, $\LQ$, and $\LK$ be the full subcategories of $\glr$ whose objects are Legendrian racks, Legendrian quandles, and Legendrian kei, respectively.
    \end{definition}    
    	\begin{example}[\cite{bi}*{Ex.\ 3.6}]
		Let $G$ be a group, and let $z\in Z(G)$ be a central element of $G$. 
        Then multiplication by $z$ defines a GL-structure on the conjugation quandle $\Conj G$. This GL-quandle is a Legendrian quandle if and only if $z^2=1$ in $G$.
	\end{example}

    The following part of Theorem \ref{thm:gl} follows immediately.

\begin{prop}\label{prop:equal}
    In each of the following pairs, the categories mentioned have precisely the same objects and morphisms:
    \begin{itemize}
        \item Involutory GL-racks $(R,\uu)$ in which $\uu$ is an involution and involutory symmetric racks.
        \item Legendrian kei $(\LK)$ and symmetric kei $(\SK)$.
    \end{itemize}
\end{prop}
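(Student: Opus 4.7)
The plan is to reduce both claims to Proposition \ref{cor:inv-cent}, which characterizes good involutions of involutory racks as involutions in the centralizer $C_{\Aut R}(\Inn R)$.

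For the first bullet, fix an involutory rack $R=(X,s)$, and I will compare the two notions of structure. By Definition \ref{def:gl-rack}, an involutory GL-structure on $R$ is an involution $\uu\in\Aut R$ satisfying $\uu s_x=s_x\uu$ for all $x\in X$, i.e., an involution in $C_{\Aut R}(\Inn R)$. By Proposition \ref{cor:inv-cent}, this is precisely the condition for $\uu$ to be a good involution of $R$. Hence the two categories have the same objects. Next, by Definitions \ref{def:gl-hom} and \ref{def:symmetric} (and the symmetric morphism definition), both categories take morphisms to be rack homomorphisms $\varphi:R_1\to R_2$ satisfying $\varphi\uu_1=\uu_2\varphi$, so the morphisms also coincide.

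For the second bullet, let $(R,\uu)$ be a Legendrian kei. Since $R$ is a kei, it is a quandle, so $\theta_R=\id_X$; the Legendrian condition $\theta=\uu^{-2}$ then forces $\uu^2=\id_X$, i.e., $\uu$ is an involution. Conversely, if $R$ is a kei and $\uu$ is an involutory GL-structure on $R$, then $\theta_R=\id_X=\uu^{-2}$, so $(R,\uu)$ is a Legendrian rack. Thus Legendrian kei are precisely involutory GL-racks $(R,\uu)$ whose underlying rack is a kei and whose GL-structure is an involution. Applying the first bullet, restricted to the full subcategories in which $R$ is a kei, identifies $\LK$ with $\SK$ as the same collections of objects and morphisms.

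The steps above are each definitional or a direct invocation of Proposition \ref{cor:inv-cent}, so no essential obstacle arises; the only subtlety to be careful about is confirming that the morphism conditions of the two theories really are literally the same equation $\varphi\uu_1=\uu_2\varphi$, rather than merely isomorphic conditions, which is what makes the categories equal on the nose rather than merely equivalent.
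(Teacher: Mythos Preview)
Your proposal is correct and follows essentially the same approach as the paper: the paper's proof is a one-line appeal to Proposition \ref{cor:inv-cent} together with the definitions of $\glr$ and $\LK$, and you have simply unpacked that appeal in detail. Your additional observation that for a kei the Legendrian condition $\theta=\uu^{-2}$ forces $\uu$ to be an involution (and conversely) makes explicit what the paper leaves implicit in the phrase ``the definitions of \dots\ $\LK$.''
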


\begin{proof}
    This follows directly from Proposition \ref{cor:inv-cent} and the definitions of $\glr$ and $\LK$.
\end{proof}

\subsection{Proof of Theorem \ref{thm:gl}}

To prove Theorem \ref{thm:gl}, we construct functors $F: \mathsf{LR}\to\mathsf{Rack}$ and $F\inv:\Rc\to\LR$ satisfying the following criteria; cf.\ \cite{taGL}*{Sec.\ 5}.
\begin{prop}\label{prop1}
    The functors $F$ and $F\inv$ are mutually inverse. 
    Moreover, both functors send medial objects to medial objects.
\end{prop}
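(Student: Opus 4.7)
The plan is to define the functors explicitly and then verify in four coordinated computations that they are mutually inverse and preserve mediality. Motivated by the Legendrian condition $\uu^{-2}=\theta_R$, I will set $F\inv(R) := ((X, \theta_R s),\,\theta_R\inv)$ for a rack $R=(X,s)$, where $(\theta_R s)_x := \theta_R s_x$, and $F(R,\uu) := (X, \uu s)$ for a Legendrian rack $(R,\uu)$, where $(\uu s)_x := \uu s_x$. Both functors will act as the identity on underlying functions of morphisms.

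First, I will check that these assignments land in the correct categories. The key algebraic facts are: (i) central automorphisms $\phi$ of a rack satisfy $s_{\phi(z)}=s_z$ and commute with all $s_x$; (ii) $\theta_R$ is central in $\Rc$ by Szymik's theorem; and (iii) a GL-structure commutes with all $s_x$. Using these, the rack axiom for $(X,\theta_R s)$ and for $(X,\uu s)$ each reduces to the rack axiom for $R$, with the Legendrian condition $\uu^2=\theta_R\inv$ entering precisely in the latter check. A short calculation then gives $\theta_{(X,\theta_R s)}=\theta_R^2$, so $\theta_R\inv$ is visibly a Legendrian structure on $(X,\theta_R s)$.

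Next, I will verify the two composition identities on objects. The identity $F\circ F\inv=\id_\Rc$ is immediate from $\theta_R\inv\cdot \theta_R s_x=s_x$. For $F\inv\circ F=\id_\LR$, the crux is to compute the canonical automorphism of $(X,\uu s)$: it equals $x\mapsto \uu s_x(x)=\uu\theta_R(x)=\uu\cdot \uu^{-2}(x)=\uu\inv(x)$, so applying $F\inv$ twists the rack structure back via $\uu\inv\cdot \uu s_x=s_x$ and recovers the GL-structure $(\uu\inv)\inv=\uu$. Functoriality on morphisms reduces to the naturality of $\theta$ (being central in $\Rc$) and the defining property of GL-rack homomorphisms.

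Finally, preservation of mediality follows from a direct pointwise computation. For both functors, the twist is by a central automorphism $\phi$ satisfying $s_{\phi(v)}=s_v$, and one checks that such twisting multiplies both sides of the medial identity $s_{s_z(y)}(s_w(x))=s_{s_z(w)}(s_y(x))$ by $\phi^2$, so the twisted rack is medial if and only if $R$ is. The main obstacle is conceptual rather than technical: one must guess the correct twists to define $F$ and $F\inv$, coordinated so that the Legendrian condition $\uu^{-2}=\theta$ and the centrality of $\theta$ interlock. Once the definitions are fixed, the four verifications are routine bookkeeping.
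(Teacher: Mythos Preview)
Your argument is correct, but you have chosen a \emph{different} pair of functors than the paper does. The paper defines $F(X,s,\uu):=(X,\uu^3 s)$ and $F\inv(X,s):=(X,\theta_R^{-3}s,\theta_R)$, whereas you twist by $\uu$ and $\theta_R$ respectively and take the GL-structure on $F\inv(R)$ to be $\theta_R\inv$ rather than $\theta_R$. Both choices yield mutually inverse functors: the key computation in either case is that the canonical automorphism of the twisted rack is $\uu^k\theta_R=\uu^{k-2}$ (for the appropriate odd $k$), so that $F\inv F$ returns the original Legendrian structure. Your exponents are smaller and the verification is marginally cleaner; the paper's choice has the minor aesthetic advantage that the GL-structure on $F\inv(R)$ is $\theta_R$ itself rather than its inverse, which aligns directly with the description of $Z(\Rc)$ as $\langle\theta\rangle$. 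The mediality arguments are essentially the same in spirit: the paper simply observes that the twist is a rack endomorphism centralizing $\Inn R$, while you unpack this into the explicit statement that twisting multiplies both sides of the medial identity by $\phi^2$. Either way, the restrictions to $\LQ$, $\LK$, and $\InvRk$ go through unchanged with your functors, so your construction serves equally well for the remainder of the proof of Theorem~\ref{thm:gl}.
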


\begin{prop}\label{prop3}
    $F(\LQ)=\kir$.
\end{prop}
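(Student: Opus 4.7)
My plan is to use Proposition \ref{prop1} to reduce $F(\LQ) = \kir$ to two pointwise verifications: (a) $F$ sends Legendrian quandles to kink-involutory racks, and (b) $F^{-1}$ sends kink-involutory racks to Legendrian quandles. Because $F$ and $F^{-1}$ are mutually inverse, (a) gives $F(\LQ) \subseteq \kir$, while (b), combined with $F F^{-1} = \id_{\Rc}$, yields $R = F(F^{-1}(R)) \in F(\LQ)$ for every $R \in \kir$.

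For (a), an object $(R, \uu) \in \LQ$ is a quandle, so $\theta_R = \id_X$; the Legendrian-rack axiom $\theta_R = \uu^{-2}$ then forces $\uu^2 = \id_X$. Using the natural description of $F$ from Proposition \ref{prop1}, which twists the rack structure $s$ of $R$ by $\uu$ into $s'_x := \uu s_x$, the canonical automorphism of the resulting rack satisfies
\[
\theta_{F(R, \uu)}(x) = s'_x(x) = \uu\, \theta_R(x) = \uu(x),
\]
so $\theta_{F(R, \uu)} = \uu$ is an involution and $F(R, \uu) \in \kir$.

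For (b), take $R \in \kir$, so $\theta_R^2 = \id_X$. Inverting the twist produces a Legendrian rack $F^{-1}(R) = (R', \uu')$ whose GL-structure is $\uu' := \theta_R^{-1}$ and whose underlying rack structure is $s'_x := \theta_R\, s_x$. A parallel computation yields $\theta_{R'}(x) = \theta_R\, \theta_R(x) = \theta_R^2(x) = x$, so $R'$ is a quandle and $F^{-1}(R) \in \LQ$. The main thing to watch is that the sign conventions in the explicit forms of $F$ and $F^{-1}$ from Proposition \ref{prop1} match; once those are fixed, both (a) and (b) reduce to one-line computations that exploit the centrality of $\theta$ in $\Rc$ and of $\uu$ in $\glr$, leaving no substantive obstacle.
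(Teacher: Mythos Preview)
Your approach is correct and mirrors the paper's: verify $F(\LQ)\subseteq\kir$ and $F^{-1}(\kir)\subseteq\LQ$, then invoke Proposition~\ref{prop1}. The only correction is that the paper's functors are defined with cubes, namely $F(X,s,\uu)=(X,\uu^3 s)$ and $F^{-1}(X,s)=((X,\theta_R^{-3}s),\theta_R)$, not the exponent-one twists $s'_x=\uu s_x$ and $s'_x=\theta_R s_x$ (with GL-structure $\theta_R^{-1}$) that you wrote; your simplified formulas happen to agree with the actual definitions precisely because the Legendrian-quandle hypothesis in (a) forces $\uu^2=\id_X$ (so $\uu^3=\uu$) and the kink-involutory hypothesis in (b) forces $\theta_R^{-3}=\theta_R=\theta_R^{-1}$, so you should start from the cube definitions and record those reductions explicitly rather than presenting the simplified twists as though they were the definitions of $F$ and $F^{-1}$.
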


\begin{prop}\label{prop4}
    $F(\LK)=\InvRk$.
\end{prop}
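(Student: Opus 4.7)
The plan is to prove the set equality $F(\LK)=\InvRk$ by checking both inclusions, exploiting the explicit formulas for $F$ and $F\inv$ from the proof of Proposition~\ref{prop1}. I expect these formulas to take the following shape: $F$ sends a Legendrian rack $(R,\uu)$ with $R=(X,s)$ to the rack $(X,s')$ defined by $s'_x:=s_x\uu$, while $F\inv$ sends a rack $R'=(X,t)$ to the Legendrian rack $(R'',\uu')$ defined by $s''_x:=t_x\theta_{R'}$ and $\uu':=\theta_{R'}\inv$. Before starting, I unpack the two sides. A Legendrian kei $(R,\uu)$ has $R$ a kei, so $\theta_R=\id_X$; combined with $\theta_R=\uu^{-2}$, this forces $\uu$ itself to be an involution. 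An involutory rack is simply a rack $(X,t)$ with $t_x^2=\id_X$ for every $x\in X$.

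For the forward inclusion $F(\LK)\subseteq\InvRk$, I take $(R,\uu)\in\LK$ and compute
\[
(s_x\uu)^2 \;=\; s_x\,\uu\,s_x\,\uu \;=\; s_x^2\,\uu^2 \;=\; \id_X,
\]
using the GL-axiom $\uu s_x=s_x\uu$ for the middle equality and the hypotheses that both $R$ and $\uu$ are involutory for the last. Thus $F(R,\uu)\in\InvRk$.

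For the reverse inclusion, I take $R'=(X,t)\in\InvRk$ and show $F\inv(R')\in\LK$; since $FF\inv=\id$ by Proposition~\ref{prop1}, this gives $R'=F(F\inv(R'))\in F(\LK)$. The crucial input is Proposition~\ref{prop:inv-racks}, which says $R'$ is kink-involutory, so $\theta_{R'}$ is an involution lying in $C_{\Aut R'}(\Inn R')$. Hence $\uu'=\theta_{R'}\inv=\theta_{R'}$ is an involution, and the twisted operators $s''_x=t_x\theta_{R'}$ satisfy
\[
s''_x(x)=t_x(t_x(x))=t_x^2(x)=x, \qquad (s''_x)^2=t_x\,\theta_{R'}\,t_x\,\theta_{R'}=t_x^2\,\theta_{R'}^2=\id_X,
\]
where the first chain uses the defining identity $\theta_{R'}(x)=t_x(x)$, the second uses centrality of $\theta_{R'}$ to commute it past $t_x$, and both rely on $t_x^2=\id_X$. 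So $R''$ is a kei with $\uu'$ an involutory Legendrian structure, placing $F\inv(R')$ in $\LK$.

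The main obstacle is keeping the two distinct involutory conditions on $R'$---the rack-structure involution $t_x^2=\id_X$ and the canonical-automorphism involution $\theta_{R'}^2=\id_X$ supplied by Proposition~\ref{prop:inv-racks}---properly aligned when twisting $R'$ to produce $F\inv(R')$. Once the centrality of $\theta$ in $\Rc$ and the GL-axiom are invoked correctly, the verification collapses to the short computations displayed above.
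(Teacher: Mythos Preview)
Your proof is correct and follows essentially the same approach as the paper: verify both inclusions by direct computation with the explicit functors, using Proposition~\ref{prop:inv-racks} for the reverse direction. One caveat: your guessed formulas for $F$ and $F\inv$ are not the paper's---the paper defines $F(X,s,\uu)=(X,\uu^3 s)$ and $F\inv(X,t)=(X,\theta_{R'}^{-3}t,\theta_{R'})$, with cubes rather than first powers---but since $\uu^2=\id_X$ on $\LK$ and $\theta_{R'}^2=\id_X$ on $\InvRk$ (the latter by Proposition~\ref{prop:inv-racks}), the discrepancy vanishes in exactly the context you need, and your computations go through verbatim for the actual functors. The only other difference is that the paper invokes Proposition~\ref{prop3} to conclude immediately that $F\inv(R')$ is a Legendrian \emph{quandle}, leaving only the involutory condition to check, whereas you verify the quandle axiom $s''_x(x)=x$ directly; both routes are short.
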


Once we define $F$ and $F\inv$, these three results will more or less follow from the definitions. Together with Proposition \ref{prop:equal}, these results yield Theorem \ref{thm:gl}.

\subsubsection{Construction of functors}

Define $F:\LR\to\Rc$ on objects by
\[
(X,s,\uu)\mapsto(X,\uu^3 s),
\]
where the rack structure $\uu^3s:X\to S_X$ is defined by $x\mapsto\uu^3 s_x$. Let $F$ preserve the underlying set maps of morphisms in $\LR$.

To construct an inverse functor $F\inv:\Rc\to\LR$, define $F\inv$ on objects by
\[
R=(X,s)\mapsto (X,\theta^{-3}_Rs,\theta_R),
\]
where the rack structure $\theta^{-3}_Rs:X\to S_X$ is defined by $x\mapsto \theta^{-3}_R s_x$. Let $F$ preserve the underlying set maps of morphisms in $\Rc$.

Using the inclusions $\uu^3\in Z(\glr)$ and $\theta,\theta^{-3}\in Z(\Rc)$, it is straightforward to verify that $F$ and $F\inv$ are covariant functors.

\subsubsection{Proofs of Propositions \ref{prop1}--\ref{prop4}}

\begin{proof}[Proof of Proposition \ref{prop1}]
    It is clear that $FF\inv=\mathbf{1}_\Rc$. Using the fact that $\theta_R=\uu^{-2}$ for all Legendrian racks $(R,\uu)$, verifying that $F\inv F=\mathbf{1}_\LR$ is straightforward. 
    Finally, the claim that $F$ and $F\inv$ send medial objects to medial objects follows from the fact that, for all GL-racks $(R,\uu)$, the maps $\theta_R^{-3}$ and $\uu$ are endomorphisms of $R$.
\end{proof}

\begin{proof}[Proof of Proposition \ref{prop3}]
    We verify that $F(\LQ)=\kir$. If $R=(X,s)$ is a kink-involutory rack, then it is straightforward to check that the underlying rack $(X,\theta^{-1}_Rs)$ of the Legendrian rack $F\inv(R)$ is a quandle.
    
    Conversely, if $L=(X,s,\uu)$ is a Legendrian quandle, then \[\theta_{F(L)}^2=(\uu^3\theta_{(X,s)})^2=\uu^2=\id_X,\] so  $F(L)$ is kink-involutory. 
\end{proof}

\begin{proof}[Proof of Proposition \ref{prop4}]
    We verify that $F(\LK)=\InvRk$. Let $L=(X,s,\uu)$ be a Legendrian kei. Since $\uu$ and $s_x$ commute and are involutions for all $x\in X$, the rack $F(L)=(X,\uu s)$ is involutory.
    
    Conversely, if $R=(X,s)$ is an involutory rack, then Proposition \ref{prop:inv-racks} and Proposition \ref{prop3} show that $F\inv(R)$ is a Legendrian quandle with rack structure $\theta_R^{-1}s$. Since $\theta^{-1}\in Z(\Rc)$, Proposition \ref{prop:inv-racks} shows that this rack structure is involutory. Hence, $F\inv(R)$ is a Legendrian kei.
\end{proof}

\section{Open questions}\label{sec:open}
We conclude by proposing directions for future work.

\begin{prob}
    Compute the set $\Anti R$ for various families of racks $R$.
\end{prob}

\begin{prob}
    Classify the group $\Aut R\cup\Anti R$ for various families of racks $R$.
\end{prob}

\begin{prob}
    Compute the set $\Good R$ for more families of racks $R$, including those that do not embed into conjugation quandles (cf.\ \citelist{\cite{embed}\cite{joyce}}).
\end{prob}

\begin{prob}
    Generalize Theorem \ref{thm2} to $n$-fold conjugation quandles (see \cite{matveev}*{Ex.\ 2 in Sec.\ 2}).
\end{prob}

\begin{prob}
    Distinguish the symmetric quandles obtained from Theorems \ref{thm2} and \ref{thm1} up to isomorphism.
\end{prob}

In light of our computational findings in Appendix \ref{app:b}, we also pose the following two questions.

\begin{prob}\label{open0}
    Let $G$ be a finite nonabelian group of order $n$. Is the number of good involutions of $\Conj G$ completely determined by $n$, the order of the center $Z(G)$, and the class number $k(G)$ of $G$? If so, is there an exact formula to compute this number?
\end{prob}

\begin{prob}\label{open1}
    In the setting of Corollary \ref{cor:orbits}, suppose that $|Z(H)|,|X/H|\geq 2$ (so, in particular, $\Conj X$ is not connected). Is the upper bound on $|{\Good(\Conj X)}|$ in Corollary \ref{cor:orbits} always strict?
\end{prob}

Note that if we drop either of the assumptions that $|Z(G)|,|X/H|\geq 2$, then Problem \ref{open1} has a negative answer; see Remark \ref{rmk:sharp}.

In light of Proposition \ref{prop:galkin} and Examples \ref{ex:sl25}--\ref{ex:su2}, we conclude with the following conjecture.

\begin{conj}\label{conjecture}
    Let $R$ be a connected quandle. If $R$ is noninvolutory, then $R$ has at most one good involution.
\end{conj}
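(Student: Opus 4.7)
My plan is to prove the contrapositive in its strong form: given two good involutions $\rho, \rho' \in \Good R$, I would try to show $\rho = \rho'$. Setting $\sigma := \rho\rho'$, Observation \ref{obs:anticomp} gives $\sigma \in \Aut R$, and the commuting relations $\rho s_x = s_x\rho$ and $\rho' s_x = s_x\rho'$ from Definition \ref{def:symmetric} together imply $\sigma \in K := C_{\Aut R}(\Inn R)$. A standard transitivity argument shows that $K$ acts \emph{freely} on $X$ in any connected quandle: if $\phi \in K$ fixes some $x_0$, then for every $x = g(x_0)$ with $g \in \Inn R$ we have $\phi(x) = g\phi(x_0) = g(x_0) = x$. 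The conjecture therefore reduces to exhibiting any fixed point of $\sigma$.

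The relation $(\rho\sigma)^2 = (\rho')^2 = \id_X$ rearranges to the dihedral identity $\rho\sigma\rho = \sigma^{-1}$, so $\rho$ permutes the $\langle\sigma\rangle$-orbits on $X$ and restricts on each fixed orbit to an orientation-reversing involution of a cycle. The noninvolutory hypothesis enters through the fibers of the map $x\mapsto s_x$: since all $s_x$ are $\Inn R$-conjugate and none has order $2$, the sets $C_x := \{y \in X : s_y = s_x\}$ and $C_x^- := \{y \in X : s_y = s_x^{-1}\}$ are disjoint. The good-involution axioms force $\rho(C_x) = C_x^-$ and $\sigma(C_x) = C_x$, so each $\sigma$-orbit lies in some $C_x$ while its $\rho$-image lies in the disjoint $C_x^-$; hence $\rho$ cannot fix any $\sigma$-orbit, and $\rho$ must pair up the $\sigma$-orbits into disjoint two-orbit blocks.

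The main obstacle is the remaining case, in which $\rho$ matches the $\sigma$-orbits into pairs and $\sigma$ still has no fixed point. To close the argument I would try to exploit finer rigidity coming from the point stabilizer $(\Inn R)_{x_0}$: because $\sigma$ commutes with $(\Inn R)_{x_0}$, it preserves the $(\Inn R)_{x_0}$-fixed set in $X$, and one would hope to argue that this fixed set is constrained enough to force $\sigma(x_0) = x_0$. An alternative is to translate the conditions $\sigma \in K$ and $\rho\sigma\rho = \sigma^{-1}$ into the Joyce-style coset presentation $R \cong Q(\Inn R, (\Inn R)_{x_0}, s_{x_0})$ and read off group-theoretic identities pinning down $\sigma$. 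I expect this step to be genuinely delicate, since large non-faithful connected noninvolutory quandles with nontrivial $Z(H)$ (for instance, the conjugacy class of semisimple elements with eigenvalue multiset $\{1,\omega,\omega^2\}$ in $\mathrm{SL}(3,q)$ for $q\equiv 1\pmod 3$ and $\omega$ a primitive cube root of unity) appear by Corollary \ref{cor:connected} to admit one good involution for each $z \in Z(H)$; ruling such examples out will require either a sharper structural argument or additional hypotheses beyond connectedness and noninvolutoriness.
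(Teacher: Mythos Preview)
This statement appears in the paper as an open \emph{conjecture} in the section on open questions; the paper offers no proof, only computational and example-based evidence together with the group-theoretic reformulation in the next conjecture. There is therefore nothing in the paper to compare your argument against.

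Your attempt is, as you acknowledge, incomplete: the reduction to finding a fixed point of $\sigma=\rho\rho'$ via the free action of $C_{\Aut R}(\Inn R)$ is correct and standard, and the dihedral relation $\rho\sigma\rho=\sigma^{-1}$ together with the fiber analysis is sound, but you do not close the case where $\rho$ pairs the $\sigma$-orbits without fixing any.

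More to the point, the example you raise at the end is not an obstruction to be worked around---it is a genuine \emph{counterexample}, and the conjecture as stated is false. Take $G=\SL(3,q)$ with $q\equiv 1\pmod 3$, so $Z(G)=\{\lambda I_3:\lambda^3=1\}\cong\Z/3\Z$, and let $X$ be the conjugacy class of $\operatorname{diag}(1,\omega,\omega^2)$ for $\omega$ a primitive cube root of unity in $\mathbb{F}_q$. The $\GL(3,q)$-centralizer of this element is the full diagonal torus, on which $\det$ is surjective, so the $\GL$-class does not split in $\SL$; hence $X$ is a single $\SL(3,q)$-conjugacy class. Since $X\not\subset Z(G)$ and the only proper normal subgroups of $\SL(3,q)$ lie in $Z(G)$, we have $\langle X\rangle=G$, so $\Conj X$ is connected by Lemma~\ref{lem:conn}. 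For each $z\in Z(G)$ and $x\in X$ the eigenvalue multiset of $zx^{-1}$ is $\{z,z\omega^2,z\omega\}=\{1,\omega,\omega^2\}$, so $zx^{-1}\in X$; by Corollary~\ref{cor:connected} each of the three maps $x\mapsto zx^{-1}$ is a good involution of $\Conj X$. Finally $\operatorname{diag}(1,\omega,\omega^2)^2=\operatorname{diag}(1,\omega^2,\omega)$ is not scalar, so by Observation~\ref{obs:conj2} the quandle $\Conj X$ is noninvolutory. Thus $|\Good(\Conj X)|=3$, contradicting the conjecture. The same example refutes the group-theoretic reformulation that follows it in the paper. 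Rather than trying to repair your argument, you should present this as a disproof.
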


By Observation \ref{obs:conj2} and Corollary \ref{cor:connected}, the specialization of Conjecture \ref{conjecture} to subquandles $\Conj X$ of conjugation quandles $\Conj G$ is equivalent to the following group-theoretic conjecture.

\begin{conj}
    Let $G$ be a group, and let $X$ be a subset of $G$ such that $X$ is a conjugacy class of the subgroup $H:=\langle X\rangle\leq G$. If there exist distinct central elements $z_1,z_2\in Z(H)$ such that $z_1x\inv,z_2x\inv\in X$ for all $x\in X$, then $x^2\in Z(H)$ for all $x\in X$.
\end{conj}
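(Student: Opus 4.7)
The plan is to prove the contrapositive: assuming $\Conj X$ admits two distinct good involutions, deduce $x^2 \in Z(H)$ for all $x \in X$, so that $\Conj X$ is a kei by Observation~\ref{obs:conj2}. By Corollary~\ref{cor:connected}, the two good involutions are of the form $\rho_i(x) = z_i x\inv$ for distinct central elements $z_1, z_2 \in Z(H)$ satisfying $z_i X\inv = X$; setting $z := z_1 z_2\inv$, the composition $\rho_1\rho_2$ is left-multiplication by $z$, so $z \ne 1$ and $zX = X$. Hence the hypothesis is equivalent to the simultaneous existence of $z_1 \in Z(H)$ with $z_1 X\inv = X$ and a nontrivial $z \in Z(H)$ with $zX = X$.

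Fix $x \in X$, and pick $h, k \in H$ with $hxh\inv = z_1 x\inv$ and $kxk\inv = z_2 x\inv$. Direct computation yields $h^2, k^2 \in C_H(x) \subseteq C_H(x^2)$, together with
\[
h x^2 h\inv = z_1^2 x^{-2}, \qquad k x^2 k\inv = z_2^2 x^{-2}, \qquad [h,x] = z_1 x^{-2}, \qquad [h,x][k,x]\inv = z.
\]
Setting $m := hk\inv$, one checks that $m x m\inv = z\inv x$ and $m x^2 m\inv = z^{-2} x^2$. An intermediate milestone is to establish that $z^2 = 1$, after which $m$ centralizes $x^2$ but not $x$. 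To conclude $x^2 \in Z(H)$, it then suffices (since $X$ generates $H$) to show $yx^2y\inv = x^2$ for every $y \in X$; applying the above construction to each $y$ produces witnesses $h_y, k_y, m_y$, and one would attempt to propagate these local commutator identities into a global centralizer statement using the $H$-conjugacy relations $y = ax a\inv$.

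The main obstacle is precisely this propagation step, which does not appear to follow from the local identities alone. A cleaner reformulation is to descend to the inner automorphism group $\bar H := H/Z(H)$: the conjecture then becomes the purely group-theoretic assertion that if a real conjugacy class $\bar X$ generates $\bar H$ and some central extension $1 \to Z \to H \to \bar H \to 1$ has a preimage class $X \subset H$ invariant under multiplication by a nontrivial element of $Z$, then $\bar X$ consists of involutions in $\bar H$. This reformulation ties the Schur multiplier $H_2(\bar H, \Z)$ to the conjugacy-class structure of $\bar H$; it is consistent with every example considered in this paper (notably the order-$4$ class of size $30$ in $\SL(2, 5)$, which lifts the $(2,2)$-cycle class of $A_5$), but a proof in full generality appears to require substantially new ideas, and I expect this to be the chief difficulty.
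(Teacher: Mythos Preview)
The statement you are attempting is not proved in the paper: it is stated there as an open \emph{conjecture} (the final item in the section of open questions), presented as the group-theoretic reformulation of the preceding conjecture that connected noninvolutory quandles have at most one good involution. The paper offers no proof and no proof sketch; it only records the equivalence with the quandle-theoretic version via Observation~\ref{obs:conj2} and Corollary~\ref{cor:connected}.

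Your write-up is therefore not a proof to be compared against the paper's, but an exploratory attempt at an open problem---and you are candid about this. Your initial reductions are correct: the translation to the existence of a nontrivial $z\in Z(H)$ with $zX=X$ together with some $z_1\in Z(H)$ with $z_1X^{-1}=X$ is exactly the content of Corollary~\ref{cor:connected}, and your local computations ($h^2\in C_H(x)$, $[h,x]=z_1x^{-2}$, $mxm^{-1}=z^{-1}x$, etc.) all check out. However, the two steps you flag as ``milestones''---that $z^2=1$, and that the local centralizer relations propagate to $x^2\in Z(H)$---are precisely the heart of the conjecture, and you supply no argument for either. Your reformulation in terms of real conjugacy classes in $\bar H=H/Z(H)$ and the Schur multiplier is an interesting perspective, but as you yourself conclude, it does not yield a proof.

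In short: there is no discrepancy with the paper, because the paper makes no claim to a proof. What you have written is a partial analysis of an open problem, accurately self-assessed as incomplete.
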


\bibliographystyle{amsplain}

\begin{bibdiv}
\begin{biblist}

\bib{embed}{article}{
    AUTHOR = {Akita, Toshiyuki},
     TITLE = {Embedding {A}lexander quandles into groups},
   JOURNAL = {J. Knot Theory Ramifications},
    VOLUME = {32},
      YEAR = {2023},
    NUMBER = {2},
     PAGES = {Paper No. 2350011, 4},
      ISSN = {0218-2165,1793-6527},
  review = {\MR{4564617}},
       URL = {https://doi.org/10.1142/S0218216523500116},
}

\bib{core}{article}{,
    AUTHOR = {Bergman, George M.},
     TITLE = {On core quandles of groups},
   JOURNAL = {Comm. Algebra},
    VOLUME = {49},
      YEAR = {2021},
    NUMBER = {6},
     PAGES = {2516--2537},
      ISSN = {0092-7872,1532-4125},
  review = {\MR{4255023}},
       URL = {https://doi.org/10.1080/00927872.2021.1874400},
}

\bib{dihedral}{article}{,
    AUTHOR = {Carter, J.~Scott},
    author={Oshiro, Kanako},
    author={Saito, Masahico},
     TITLE = {Symmetric extensions of dihedral quandles and triple points of
              non-orientable surfaces},
   JOURNAL = {Topology Appl.},
    VOLUME = {157},
      YEAR = {2010},
    NUMBER = {5},
     PAGES = {857--869},
      ISSN = {0166-8641,1879-3207},
  review = {\MR{2593699}},
       URL = {https://doi.org/10.1016/j.topol.2009.12.002},
}

\bib{ceniceros}{article}{
      author={Ceniceros, Jose},
      author={Elhamdadi, Mohamed},
      author={Nelson, Sam},
       title={Legendrian rack invariants of {L}egendrian knots},
        date={2021},
        ISSN={1225-1763,2234-3024},
     journal={Commun. Korean Math. Soc.},
      volume={36},
      number={3},
       pages={623\ndash 639},
         url={https://doi.org/10.4134/CKMS.c200251},
      review={\MR{4292403}},
}

\bib{he}{misc}{
      author={Cheng, Zhiyun},
      author={Zhiyi He},
       title={Fundamental generalized {L}egendrian rack and classical invariants},
        date={2025},
         url={https://arxiv.org/abs/2507.18500},
        note={Preprint, arXiv:2507.18500 [math.GT].},
}

\bib{galkin-symm}{article}{,
    AUTHOR = {Clark, W.~Edwin},
    author= {Elhamdadi, Mohamed},
    author = {Hou, Xiang-dong},
    author = {Saito, Masahico},
    author = {Yeatman, Timothy},
     TITLE = {Connected quandles associated with pointed abelian groups},
   JOURNAL = {Pacific J. Math.},
    VOLUME = {264},
      YEAR = {2013},
    NUMBER = {1},
     PAGES = {31--60},
      ISSN = {0030-8730,1945-5844},
  review = {\MR{3079760}},
       URL = {https://doi.org/10.2140/pjm.2013.264.31},
}

\bib{galkin-1}{article}{,
    AUTHOR = {Clark, W.~Edwin},
    author = {Hou, Xiang-dong},
     TITLE = {Galkin quandles, pointed abelian groups, and sequence {A}000712},
   JOURNAL = {Electron. J. Combin.},
    VOLUME = {20},
      YEAR = {2013},
    NUMBER = {1},
     PAGES = {Paper 45, 8},
      ISSN = {1077-8926},
  review = {\MR{3040607}},
       URL = {https://doi.org/10.37236/2676},
}

\bib{su2}{article}{
    AUTHOR = {Clark, W.~Edwin},
    author = {Saito, Masahico},
     TITLE = {Longitudinal mapping knot invariant for $\SU(2)$},
   JOURNAL = {J. Knot Theory Ramifications},
    VOLUME = {27},
      YEAR = {2018},
    NUMBER = {11},
     PAGES = {1843014, 22},
      ISSN = {0218-2165,1793-6527},
  review = {\MR{3868943}},
       URL = {https://doi.org/10.1142/S0218216518430149},
}

\bib{Hom}{article}{
      author={Crans, Alissa~S.},
      author={Nelson, Sam},
       title={Hom quandles},
        date={2014},
        ISSN={0218-2165,1793-6527},
     journal={J. Knot Theory Ramifications},
      volume={23},
      number={2},
       pages={1450010, 18},
         url={https://doi.org/10.1142/S0218216514500102},
      review={\MR{3197054}},
}

\bib{eisermann}{article}{,
    AUTHOR = {Eisermann, Michael},
     TITLE = {Homological characterization of the unknot},
   JOURNAL = {J. Pure Appl. Algebra},
    VOLUME = {177},
      YEAR = {2003},
    NUMBER = {2},
     PAGES = {131--157},
      ISSN = {0022-4049,1873-1376},
   MRCLASS = {57M25 (55N35 57M27)},
  review = {\MR{1954330}},
       URL = {https://doi.org/10.1016/S0022-4049(02)00068-3},
}

\bib{survey}{article}{
      author={Elhamdadi, Mohamed},
       title={A survey of racks and quandles: {S}ome recent developments},
        date={2020},
        ISSN={1005-3867,0219-1733},
     journal={Algebra Colloq.},
      volume={27},
      number={3},
       pages={509\ndash 522},
         url={https://doi.org/10.1142/S1005386720000425},
      review={\MR{4141628}},
}

\bib{quandlebook}{book}{
      author={Elhamdadi, Mohamed},
      author={Nelson, Sam},
       title={Quandles: {A}n introduction to the algebra of knots},
      series={Student Mathematical Library},
   publisher={American Mathematical Society, Providence, RI},
        date={2015},
      volume={74},
        ISBN={978-1-4704-2213-4},
         url={https://doi.org/10.1090/stml/074},
      review={\MR{3379534}},
}

\bib{fenn}{article}{
      author={Fenn, Roger},
      author={Rourke, Colin},
       title={Racks and links in codimension two},
        date={1992},
        ISSN={0218-2165,1793-6527},
     journal={J. Knot Theory Ramifications},
      volume={1},
      number={4},
       pages={343\ndash 406},
         url={https://doi.org/10.1142/S0218216592000203},
      review={\MR{1194995}},
}

\bib{conjugation}{misc}{
      author={Filipi, Filip},
       title={Hayashi property for conjugation quandles},
        date={2025},
         url={https://arxiv.org/abs/2503.22377},
        note={Preprint, arXiv:2503.22377 [math.GR].},
}

\bib{fulman}{article}{,
    AUTHOR = {Fulman, Jason},
     TITLE = {Cycle indices for the finite classical groups},
   JOURNAL = {J. Group Theory},
    VOLUME = {2},
      YEAR = {1999},
    NUMBER = {3},
     PAGES = {251--289},
      ISSN = {1433-5883,1435-4446},
  review = {\MR{1696313}},
       URL = {https://doi.org/10.1515/jgth.1999.017},
}

\bib{galkin}{incollection}{
    AUTHOR = {Galkin, V. M.},
     TITLE = {Quasigroups},
 BOOKTITLE = {Algebra. {T}opology. {G}eometry, {V}ol.\ 26 ({R}ussian)},
    SERIES = {Itogi Nauki i Tekhniki},
     PAGES = {3--44, 162},
      NOTE = {Translated in J. Soviet Math. {\bf 49} (1990), no. 3,
              941--967},
 PUBLISHER = {Akad. Nauk SSSR, Vsesoyuz. Inst. Nauchn. i Tekhn. Inform.,
              Moscow},
      YEAR = {1988},
  review = {\MR{978392}},
}

\bib{GAP4}{manual}{
    organization={The GAP~Group},
       title={{GAP -- Groups, Algorithms, and Programming, Version 4.14.0}},
        date={2024},
         url={\url{https://www.gap-system.org}},
         note={\url{https://www.gap-system.org}},
}

\bib{rack-roll}{article}{
      author={Grøsfjeld, Tobias},
       title={Thesaurus racks: {C}ategorizing rack objects},
        date={2021},
        ISSN={0218-2165,1793-6527},
     journal={J. Knot Theory Ramifications},
      volume={30},
      number={4},
       pages={Paper No. 2150019, 18},
         url={https://doi.org/10.1142/S021821652150019X},
      review={\MR{4272643}},
}

\bib{hall}{book}{
    AUTHOR = {Hall, Brian},
     TITLE = {Lie groups, {L}ie algebras, and representations},
    SERIES = {Graduate Texts in Mathematics},
    VOLUME = {222},
  edition ={2nd ed.},
      subtitle = {An elementary introduction},
 PUBLISHER = {Springer, Cham},
      YEAR = {2015},
     PAGES = {xiv+449},
      ISBN = {978-3-319-13466-6; 978-3-319-13467-3},
  review = {\MR{3331229}},
       URL = {https://doi.org/10.1007/978-3-319-13467-3},
}

\bib{virtual}{article}{
    AUTHOR = {Ho, Melinda},
    author = {Nelson, Sam},
     TITLE = {Symmetric enhancements of involutory virtual birack counting
              invariants},
   JOURNAL = {J. Knot Theory Ramifications},
    VOLUME = {27},
      YEAR = {2018},
    NUMBER = {5},
     PAGES = {1850032, 14},
      ISSN = {0218-2165,1793-6527},
  MRNUMBER = {\MR{3795396}},
       URL = {https://doi.org/10.1142/S0218216518500323},
}

\bib{antiaut}{misc}{
      author={Horvat, Eva},
       title={Knot symmetries and the fundamental quandle},
        date={2017},
         url={https://arxiv.org/abs/1707.04824},
        note={Preprint, arXiv:1707.04824 [math.GT].},
}

\bib{multiple}{article}{
    AUTHOR = {Ishii, Atsushi},
     TITLE = {A multiple conjugation quandle and handlebody-knots},
   JOURNAL = {Topology Appl.},
    VOLUME = {196},
      YEAR = {2015},
     PAGES = {492--500},
      ISSN = {0166-8641,1879-3207},
  review = {\MR{3430992}},
       URL = {https://doi.org/10.1016/j.topol.2015.05.029},
}

\bib{joyce}{article}{
      author={Joyce, David},
       title={A classifying invariant of knots, the knot quandle},
        date={1982},
        ISSN={0022-4049,1873-1376},
     journal={J. Pure Appl. Algebra},
      volume={23},
      number={1},
       pages={37\ndash 65},
         url={https://doi.org/10.1016/0022-4049(82)90077-9},
      review={\MR{638121}},
}

\bib{symm-quandles-2}{article}{
      author={Kamada, Seiichi},
      author={Oshiro, Kanako},
       title={Homology groups of symmetric quandles and cocycle invariants of links and surface-links},
        date={2010},
        ISSN={0002-9947,1088-6850},
     journal={Trans. Amer. Math. Soc.},
      volume={362},
      number={10},
       pages={5501\ndash 5527},
         url={https://doi.org/10.1090/S0002-9947-2010-05131-1},
      review={\MR{2657689}},
}

\bib{symm-quandles}{incollection}{
      author={Kamada, Seiichi},
       title={Quandles with good involutions, their homologies and knot invariants},
        date={2007},
   booktitle={Intelligence of low dimensional topology 2006},
      series={Ser. Knots Everything},
      volume={40},
   publisher={World Sci. Publ., Hackensack, NJ},
       pages={101\ndash 108},
         url={https://doi.org/10.1142/9789812770967_0013},
      review={\MR{2371714}},
}

\bib{tensor}{article}{
    AUTHOR = {Kamada, Seiichi},
     TITLE = {Tensor products of quandles and 1-handles attached to surface-links},
   JOURNAL = {Topology Appl.},
    VOLUME = {301},
      YEAR = {2021},
     PAGES = {Paper No. 107520, 18},
      ISSN = {0166-8641,1879-3207},
  review = {\MR{4312970}},
       URL = {https://doi.org/10.1016/j.topol.2020.107520},
}

\bib{karmakar}{article}{,
    AUTHOR = {Karmakar, Biswadeep},
    author={Saraf, Deepanshi},
    author={Singh, Mahender},
     TITLE = {Generalized (co)homology of symmetric quandles over
              homogeneous {B}eck modules},
   JOURNAL = {J. Pure Appl. Algebra},
    VOLUME = {229},
      YEAR = {2025},
    NUMBER = {6},
     PAGES = {Paper No. 107956, 31},
      ISSN = {0022-4049,1873-1376},
  review = {\MR{4881590}},
}

\bib{karmakarGL}{misc}{
      author={Karmakar, Biswadeep},
      author={Saraf, Deepanshi},
      author={Singh, Mahender},
       title={Generalised {L}egendrian racks of {L}egendrian links},
        date={2024},
         url={https://arxiv.org/abs/2301.06854},
        note={Preprint, arXiv:2301.06854 [math.GT].},
}

\bib{lattice}{article}{,
    AUTHOR = {Kayacan, Sel\c cuk},
     TITLE = {Recovering information about a finite group from its subrack
              lattice},
   JOURNAL = {J. Algebra},
    VOLUME = {582},
      YEAR = {2021},
     PAGES = {26--38},
      ISSN = {0021-8693,1090-266X},
  review = {\MR{4256903}},
       URL = {https://doi.org/10.1016/j.jalgebra.2021.04.026},
}

\bib{bi}{article}{
      author={Kimura, Naoki},
       title={Bi-{L}egendrian rack colorings of {L}egendrian knots},
        date={2023},
        ISSN={0218-2165,1793-6527},
     journal={J. Knot Theory Ramifications},
      volume={32},
      number={4},
       pages={Paper No. 2350029, 16},
         url={https://doi.org/10.1142/S0218216523500293},
      review={\MR{4586264}},
}

\bib{original}{misc}{
      author={Kulkarni, Dheeraj},
      author={Prathamesh, T.\ V.\ H.},
       title={On rack invariants of {L}egendrian knots},
        date={2017},
         url={https://arxiv.org/abs/1706.07626},
        note={Preprint, arXiv:1706.07626 [math.GT].},
}

\bib{qualgebra}{article}{,
    AUTHOR = {Lebed, Victoria},
     TITLE = {Qualgebras and knotted 3-valent graphs},
   JOURNAL = {Fund. Math.},
    VOLUME = {230},
      YEAR = {2015},
    NUMBER = {2},
     PAGES = {167--204},
      ISSN = {0016-2736,1730-6329},
  review = {\MR{3337224}},
       URL = {https://doi.org/10.4064/fm230-2-3},
}

\bib{lie}{article}{
    AUTHOR = {Majid, Shahn},
    author= {Rietsch, Konstanze},
     TITLE = {Lie theory and coverings of finite groups},
   JOURNAL = {J. Algebra},
    VOLUME = {389},
      YEAR = {2013},
     PAGES = {137--150},
      ISSN = {0021-8693,1090-266X},
  MRNUMBER = {\MR{3065997}},
       URL = {https://doi.org/10.1016/j.jalgebra.2013.02.042},
}

\bib{matveev}{article}{
      author={Matveev, S.~Vladimir},
       title={Distributive groupoids in knot theory},
        date={1982},
        ISSN={0368-8666},
     journal={Mat. Sb. (N.S.)},
      volume={119(161)},
      number={1},
       pages={78\ndash 88, 160},
      review={\MR{672410}},
}

\bib{alex}{inproceedings}{
    AUTHOR = {Nelson, Sam},
     TITLE = {Classification of finite {A}lexander quandles},
 BOOKTITLE = {Proceedings of the {S}pring {T}opology and {D}ynamical
              {S}ystems {C}onference},
   JOURNAL = {Topology Proc.},
    VOLUME = {27},
      YEAR = {2003},
    NUMBER = {1},
     PAGES = {245--258},
      ISSN = {0146-4124,2331-1290},
  review = {\MR{2048935}},
}

\bib{nosaka}{article}{
    AUTHOR = {Nosaka, Takefumi},
     TITLE = {4-fold symmetric quandle invariants of 3-manifolds},
   JOURNAL = {Algebr. Geom. Topol.},
    VOLUME = {11},
      YEAR = {2011},
    NUMBER = {3},
     PAGES = {1601--1648},
      ISSN = {1472-2747,1472-2739},
  review = {\MR{2821435}},
       URL = {https://doi.org/10.2140/agt.2011.11.1601},
}

\bib{book}{book}{
      author={Nosaka, Takefumi},
       title={Quandles and topological pairs},
      series={SpringerBriefs in Mathematics},
   publisher={Springer, Singapore},
        date={2017},
        ISBN={978-981-10-6792-1; 978-981-10-6793-8},
         url={https://doi.org/10.1007/978-981-10-6793-8},
        subtitle={Symmetry, knots, and cohomology},
      review={\MR{3729413}},
}

\bib{symm-racks}{article}{
      author={Saito, Masahico},
      author={Zappala, Emanuele},
       title={Extensions of augmented racks and surface ribbon cocycle invariants},
        date={2023},
        ISSN={0166-8641,1879-3207},
     journal={Topology Appl.},
      volume={335},
       pages={Paper No. 108555, 19},
         url={https://doi.org/10.1016/j.topol.2023.108555},
      review={\MR{4594919}},
}

\bib{flat}{article}{
    AUTHOR = {Singh, Mahender},
     TITLE = {Classification of flat connected quandles},
   JOURNAL = {J. Knot Theory Ramifications},
    VOLUME = {25},
      YEAR = {2016},
    NUMBER = {13},
     PAGES = {1650071, 8},
      ISSN = {0218-2165,1793-6527},
  review = {\MR{3576755}},
       URL = {https://doi.org/10.1142/S0218216516500711},
}

\bib{core2}{misc}{
      author={Spaggiari, Filippo},
      author={Bonatto, Marco},
       title={On core quandles},
        date={2025},
         url={https://arxiv.org/abs/2503.01790},
        note={Preprint, arXiv:2503.01790 [math.GR].},
}

\bib{suzuki}{book}{,
    AUTHOR = {Suzuki, Michio},
     TITLE = {Group theory. {I}},
    SERIES = {Grundlehren der Mathematischen Wissenschaften},
    VOLUME = {247},
      NOTE = {Translated from the Japanese by the author},
 PUBLISHER = {Springer-Verlag, Berlin-New York},
      YEAR = {1982},
     PAGES = {xiv+434},
      ISBN = {3-540-10915-3},
   MRCLASS = {20-01},
  review = {\MR{648772}},
}

\bib{center}{article}{
      author={Szymik, Markus},
       title={Permutations, power operations, and the center of the category of racks},
        date={2018},
        ISSN={0092-7872,1532-4125},
     journal={Comm. Algebra},
      volume={46},
      number={1},
       pages={230\ndash 240},
         url={https://doi.org/10.1080/00927872.2017.1316857},
      review={\MR{3764859}},
}

\bib{taGL}{misc}{
      author={Ta, Lực},
       title={Classification and structure of generalized {L}egendrian racks},
        date={2025},
         url={https://arxiv.org/abs/2504.12671},
        note={Preprint, arXiv:2504.12671 [math.GT].},
}

\bib{gl-code}{misc}{
      author={Ta, Lực},
       title={{GL}-rack classification},
        date={2025},
         url={https://github.com/luc-ta/GL-Rack-Classification},
        note={GitHub, \url{https://github.com/luc-ta/GL-Rack-Classification}. Accessed: 2025-7-15.},
}

\bib{taGQ}{misc}{
      author={Ta, Lực},
       title={Graph quandles: {G}eneralized {C}ayley graphs of racks and right quasigroups},
        date={2025},
         url={https://arxiv.org/abs/2506.04437},
        note={Preprint, arXiv:2506.04437 [math.GT].},
}

\bib{code}{misc}{
      author={Ta, Lực},
       title={Symmetric rack classification},
        date={2025},
         url={https://github.com/luc-ta/Symmetric-Rack-Classification},
        note={\url{https://github.com/luc-ta/Symmetric-Rack-Classification}. Accessed: 2025-7-15.},
        organization={GitHub},
}

\bib{takasaki}{article}{
      author={Takasaki, Mituhisa},
       title={Abstraction of symmetric transformations},
        date={1943},
        ISSN={0040-8735,1881-2015},
     journal={T\^ohoku Math. J.},
      volume={49},
       pages={145\ndash 207},
      review={\MR{21002}},
}

\bib{tetrahedral}{article}{,
    AUTHOR = {Tamaru, Hiroshi},
     TITLE = {Two-point homogeneous quandles with prime cardinality},
   JOURNAL = {J. Math. Soc. Japan},
    VOLUME = {65},
      YEAR = {2013},
    NUMBER = {4},
     PAGES = {1117--1134},
      ISSN = {0025-5645,1881-1167},
  review = {\MR{3127819}},
}

\bib{taniguchi}{article}{
      author={Taniguchi, Yuta},
       title={Good involutions of generalized {A}lexander quandles},
        date={2023},
        ISSN={0218-2165,1793-6527},
     journal={J. Knot Theory Ramifications},
      volume={32},
      number={12},
       pages={Paper No. 2350081, 7},
         url={https://doi.org/10.1142/S0218216523500815},
      review={\MR{4688855}},
}

\bib{library}{article}{
      author={Vojtěchovský, Petr},
      author={Yang, Seung~Yeop},
       title={Enumeration of racks and quandles up to isomorphism},
        date={2019},
        ISSN={0025-5718,1088-6842},
     journal={Math. Comp.},
      volume={88},
      number={319},
       pages={2523\ndash 2540},
         url={https://doi.org/10.1090/mcom/3409},
      review={\MR{3957904}},
}

\bib{yasuda}{article}{
    AUTHOR = {Yasuda, Jumpei},
     TITLE = {Computation of the knot symmetric quandle and its application
              to the plat index of surface-links},
   JOURNAL = {J. Knot Theory Ramifications},
    VOLUME = {33},
      YEAR = {2024},
    NUMBER = {3},
     PAGES = {Paper No. 2450005, 25},
      ISSN = {0218-2165,1793-6527},
  review = {\MR{4766150}},
       URL = {https://doi.org/10.1142/S0218216524500056},
}

\bib{su2-2}{misc}{
      author={Yonemura, Kentaro},
       title={Note on spherical quandles},
        date={2021},
         url={https://arxiv.org/abs/2104.04921},
        note={Preprint, arXiv:2104.04921 [math.GT].},
}

\end{biblist}
\end{bibdiv}

\addresseshere

\clearpage

\appendix

\section{GAP code for Table \ref{table:sn}}\label{app:a}

Given an integer $n\geq 5$, the following \texttt{GAP} \cite{GAP4} program produces the orders of the CNS-subquandles of $\Conj S_n$ constructed in Example \ref{ex:an}. Table \ref{table:sn} displays the output for $5\leq n\leq 9$.

\begin{lstlisting}[language=GAP]
ComputeClassSizes := function(n)
    local Sn, An, classes, sizes, C, rep, ord, centSn, centAn;

    Sn := SymmetricGroup(n);
    An := AlternatingGroup(n);
    classes := ConjugacyClasses(Sn);
    sizes := [];

    for C in classes do
        rep := Representative(C);
        ord := Order(rep);
        if ord > 2 then
            # Use the orbit-stabilizer theorem to compare class sizes
            centSn := Centralizer(Sn, rep);
            centAn := Centralizer(An, rep);
            # Verify that C isn't a conjugacy class that splits in An
            if SignPerm(rep) = -1 or Size(centSn) = 2 * Size(centAn)
                then Add(sizes, Size(C));
            fi;
        fi;
    od;
    Sort(sizes);
    return sizes;
end;

for n in [5..9] do Print(ComputeClassSizes(n), "\n"); od;
\end{lstlisting}

\section{Census of good involutions}\label{app:b}

In this section, we enumerate good involutions of nontrivial conjugation quandles $\Conj G$ and nontrivial core quandles $\Core G$ of groups $G$ up to order $23$. 

Our enumerations are based on the computational data obtained from our \texttt{GAP} \cite{GAP4} implementations of Algorithms \ref{alg} and \ref{alg2}. We provide our code and the raw data (including the explicit mappings $\rho:G\to G$) in a GitHub repository \cite{code}.

\begin{notation} In Tables \ref{tab:app1} and \ref{tab:app2}, we use the following notation for groups. Let $n\geq 3$ be an integer.
    \begin{itemize}
        \item Given a group $G$, let $k(G)$ be the number of conjugacy classes of $G$.
        \item Let $S_n$ and $A_n$ denote the symmetric and alternating groups on $n$ letters, respectively.
        \item Let $D_n$ and $\Dic_n$ denote the dihedral group of order $2n$ and the dicyclic group of order $4n$, respectively.
        \item Let $Q_8$, $QD_{16}$, and $M_{16}$ denote the quaternion group of order $8$, the quasidihedral group of order $16$, and the modular maximal-cyclic group of order $16$, respectively.
    \end{itemize}
\end{notation}

In Table \ref{tab:app1}, for all nonabelian groups $G$ up to order $22$, we compare the number of good involutions of $\Conj G$ against the upper bound on this number from Corollary \ref{cor:orbits}. These results motivate Problems \ref{open0} and \ref{open1}. We only consider nonabelian groups because conjugation quandles of abelian groups are trivial. Cf.\ Table \ref{table:conj}.

\begin{table}[]
\caption{Number of good involutions of nontrivial conjugation quandles $\Conj G$ of groups $G$ up to order $22$, compared against the upper bound from Corollary \ref{cor:orbits}.}
\label{tab:app1}
\begin{tabular}{cccc}
Group                                & Order & $|{\Good(\Conj G)}|$ & $|Z(G)|^{k(G)}$ \\ \hline
$S_3$                                & 6     & 1                    & 1                     \\
$D_4$                                & 8     & 16                   & 32                    \\
$Q_8$                                & 8     & 16                   & 32                    \\
$D_5$                                & 10    & 1                    & 1                     \\
$\Dic_{3}$                           & 12    & 8                    & 64                    \\
$A_4$                                & 12    & 1                    & 1                     \\
$D_6$                                & 12    & 8                    & 64                    \\
$D_7$                                & 14    & 1                    & 1                     \\
$(\Z/2\Z\times\Z/2\Z)\rtimes \Z/4\Z$ & 16    & 2160                 & 1048576               \\
$\Z/4\Z\rtimes \Z/4\Z$               & 16    & 2160                 & 1048576               \\
$M_{16}$               & 16    & 2160                 & 1048576               \\
$D_8$                                & 16    & 32                   & 128                   \\
$QD_{16}$                            & 16    & 32                   & 128                   \\
$\Dic_{4}$                           & 16    & 32                   & 128                   \\
$D_4\times \Z/2\Z$                   & 16    & 2160                 & 1048576               \\
$Q_8\times \Z/2\Z$                   & 16    & 2160                 & 1048576               \\
$(\Z/4\Z\times\Z/2\Z)\rtimes \Z/2\Z$ & 16    & 2160                 & 1048576               \\
$D_9$                                & 18    & 1                    & 1                     \\
$S_3\times\Z/3\Z$                   & 18    & 64                   & 19683                 \\
$(\Z/3\Z\times\Z/3\Z)\rtimes \Z/2\Z$ & 18    & 1                    & 1                     \\
$\Dic_{5}$                           & 20    & 16                   & 256                   \\
$\Z/5\Z\rtimes\Z/4\Z$                & 20    & 1                    & 1                     \\
$D_{10}$                             & 20    & 16                   & 256                   \\
$\Z/7\Z\rtimes\Z/3\Z$                & 21    & 1                    & 1                     \\
$D_{11}$                             & 22    & 1                    & 1                     
\end{tabular}
\end{table}

In Table \ref{tab:app2}, for all groups $G$ of exponent greater than $2$ up to order $23$, we compare the number of good involutions of $\Core G$ against the upper bound on this number from Corollary \ref{ineq:core}. We do not consider elementary abelian $2$-groups because their core quandles are trivial. Cf.\ Table \ref{table:core}.

In both tables, we list groups in the order of their \texttt{GAP} implementations \texttt{SmallGroup(n,k)}, where \texttt{n} is the order of the group and \texttt{k} is the index of the group in the list \texttt{AllSmallGroups(n)}.

\clearpage

\begin{longtable}{cccc}
\caption{Number of good involutions of nontrivial core quandles $\Core G$ of groups $G$ up to order $23$, compared against the upper bound from Corollary \ref{ineq:core}.}
\label{tab:app2} \\

\multicolumn{1}{c}{Group} & \multicolumn{1}{c}{Order} & \multicolumn{1}{c}{$|{\Good(\Core G)}|$} & \multicolumn{1}{c}{$|T|^{|\oo|}$}\\ \hline 
\endfirsthead

\caption[]{(continued)}\\
\multicolumn{4}{c}%
{}\\
\multicolumn{1}{c}{Group} & \multicolumn{1}{c}{Order} & \multicolumn{1}{c}{$|{\Good(\Core G)}|$} & \multicolumn{1}{c}{$|T|^{|\oo|}$}\\ \hline 
\endhead
\endfoot\endlastfoot

$\Z/3\Z$                                & 3     & 1                    & 1                     \\
$\Z/4\Z$                                & 4     & 4                    & 4                     \\
$\Z/5\Z$                                & 5     & 1                    & 1                     \\
$S_3$                                & 6     & 1                    & 1                     \\
$\Z/6\Z$                                & 6     & 2                    & 4                     \\
$\Z/7\Z$                                & 7     & 1                    & 1                     \\
$\Z/8\Z$                                & 8     & 4                    & 4                     \\
$\Z/4\Z\times\Z/2\Z$                                & 8     & 36                    & 256                     \\
$D_4$                                & 8     & 16                   & 32                    \\
$Q_8$                                & 8     & 16                   & 16                    \\
$\Z/9\Z$                                & 9     & 1                    & 1                     \\
$\Z/3\Z\times\Z/3\Z$                                & 9     & 1                    & 1                     \\
$D_5$                                & 10    & 1                    & 1                     \\
$\Z/10\Z$                                & 10    & 2                    & 4                     \\
$\Z/11\Z$                                & 11     & 1                    & 1                     \\
$\Dic_{3}$                           & 12    & 8                    & 8                    \\
$\Z/12\Z$                                & 12     & 4                    & 4                     \\
$A_4$                                & 12    & 1                    & 1                     \\
$D_6$                                & 12    & 8                    & 64                    \\
$\Z/6\Z\times\Z/2\Z$                                & 12     & 10                    & 256                     \\
$\Z/13\Z$                                & 13     & 1                    & 1                     \\
$D_7$                                & 14    & 1                    & 1                     \\
$\Z/14\Z$                                & 14     & 2                    & 4                     \\
$\Z/15\Z$                                & 15     & 1                    & 1                     \\
$\Z/16\Z$                                & 16     & 4                    & 4                     \\
$\Z/4\Z\times\Z/4\Z$                                & 16     & 256                    & 256                     \\
$(\Z/2\Z\times\Z/2\Z)\rtimes \Z/4\Z$ & 16    & 576                 & 4096               \\
$\Z/4\Z\rtimes \Z/4\Z$               & 16    & 384                 & 1024               \\
$\Z/8\Z\times \Z/2\Z$               & 16    & 36                 & 256               \\
$M_{16}$               & 16    & 16                 & 16               \\
$D_8$                                & 16    & 32                   & 32                   \\
$QD_{16}$                            & 16    & 32                   & 32                   \\
$\Dic_{4}$                           & 16    & 32                   & 32                   \\
$\Z/4\Z\times\Z/2\Z\times \Z/2\Z$ & 16    & 5776                 & 16777216               \\
$D_4\times \Z/2\Z$                   & 16    & 2160                 & 1048576               \\
$Q_8\times \Z/2\Z$                   & 16    & 1296                 & 65536               \\
$(\Z/4\Z\times\Z/2\Z)\rtimes \Z/2\Z$ & 16    & 256                 & 256               \\
$\Z/17\Z$                                & 17     & 1                    & 1                     \\
$D_9$                                & 18    & 1                    & 1                     \\
$\Z/18\Z$                                & 18     & 2                    & 4                     \\
$S_3\times\Z/3\Z$                   & 18    & 1                   & 1                 \\
$(\Z/3\Z\times\Z/3\Z)\rtimes \Z/2\Z$ & 18    & 1                    & 1                     \\
$\Z/6\Z\times\Z/3\Z$                   & 18    & 2                   & 4                 \\
$\Z/19\Z$                                & 19     & 1                    & 1                     \\
$\Dic_{5}$                           & 20    & 16                   & 16                   \\
$\Z/20\Z$                                & 20     & 4                    & 4                     \\
$\Z/5\Z\rtimes\Z/4\Z$                & 20    & 1                    & 1                     \\
$D_{10}$                             & 20    & 16                   & 256                   \\
$\Z/10\times\Z/2\Z$                   & 20    & 10                   & 256                 \\
$\Z/7\Z\rtimes\Z/3\Z$                & 21    & 1                    & 1                     \\
$\Z/21\Z$                                & 21     & 1                    & 1                     \\
$D_{11}$                             & 22    & 1                    & 1                     \\
$\Z/22\Z$                                & 22     & 2                    & 4                     \\
$\Z/23\Z$                                & 23     & 1                    & 1                     
\end{longtable}

\end{document}